\documentclass[11pt]{amsart}
\usepackage[utf8]{inputenc}

\usepackage[dvipsnames]{xcolor}
\usepackage{geometry}
\usepackage{chemarrow}
\usepackage{tikz}

\geometry{a4paper,
          top=3cm, bottom=3.5cm, left=2.5cm, right=2.5cm,
          heightrounded, bindingoffset=5mm
          }
\usepackage{amssymb,amsmath,amsthm}
\usepackage[utf8]{inputenc}
\usepackage{amsthm}
\usepackage{lineno}
\usepackage{float}
\usepackage{enumitem}
\usepackage{tabu}
\usepackage{nicematrix}
\usepackage{resizegather}
\usepackage{bbding}
\usepackage{tikz}
\usepackage{dsfont}
\usepackage{csquotes}
\usepackage{graphicx}
\usepackage{subcaption}
\usepackage{mathtools}
\usepackage{comment}
\usepackage{graphicx}
\usepackage{calc}
\usepackage{mathrsfs}
\usepackage{bm}
\mathtoolsset{showonlyrefs=true}
\definecolor{cornellred}{rgb}{0.7, 0.11, 0.11}
\usepackage[hidelinks]{hyperref}

\DeclareMathOperator{\Fr}{Fr}
\DeclareMathOperator{\Col}{Col}
\DeclareMathOperator{\Aut}{Aut}
\DeclareMathOperator{\Char}{char}
\DeclareMathOperator{\PGL}{PGL}
\DeclareMathOperator{\GL}{GL}
\DeclareMathOperator{\GammaL}{\Gamma L}
\DeclareMathOperator{\PGammaL}{P\Gamma L}

\newcommand{\op}{
  \mathop{
    \vphantom{\bigoplus} 
    \mathchoice
      {\vcenter{\hbox{\resizebox{\widthof{$\displaystyle\bigoplus$}}{!}{$\boxplus$}}}}
      {\vcenter{\hbox{\resizebox{\widthof{$\bigoplus$}}{!}{$\boxplus$}}}}
      {\vcenter{\hbox{\resizebox{\widthof{$\scriptstyle\oplus$}}{!}{$\boxplus$}}}}
      {\vcenter{\hbox{\resizebox{\widthof{$\scriptscriptstyle\oplus$}}{!}{$\boxplus$}}}}
  }\displaylimits 
}

\hypersetup{
    pdfpagemode={UseOutlines},
    bookmarksopen,
    pdfstartview={FitH},
    colorlinks,
    linkcolor={RoyalBlue},
    citecolor={cornellred},
    urlcolor={ForestGreen}
            }

\def\N {\mathbb N}

\definecolor{lime}{HTML}{A6CE39}
\DeclareRobustCommand{\orcidicon}{%
	\begin{tikzpicture}
	\draw[lime, fill=lime] (0,0) 
	circle [radius=0.16] 
	node[white] {{\fontfamily{qag}\selectfont \tiny ID}};
	\draw[white, fill=white] (-0.0625,0.095) 
	circle [radius=0.007];
	\end{tikzpicture}
	\hspace{-2mm}
}

\foreach \x in {A, ..., Z}{%
	\expandafter\xdef\csname orcid\x\endcsname{\noexpand\href{https://orcid.org/\csname orcidauthor\x\endcsname}{\noexpand\orcidicon}}
}

\newtheorem{theorem}{Theorem}[section]

\newtheorem{proposition}[theorem]{Proposition}
\newtheorem{corollary}[theorem]{Corollary}
\newtheorem{lemma}[theorem]{Lemma}
\newtheorem{fact}[theorem]{Fact}
\theoremstyle{definition}
\newtheorem{definition}[theorem]{Definition}
\newtheorem{example}[theorem]{Example}

\theoremstyle{remark}
\newtheorem{remark}[theorem]{Remark}
\newtheorem{claim}{Claim}

\title{On projective spaces over local fields}
\author[N.\ Cangiotti and A.\ Linzi]{Nicolò Cangiotti\orcidN{} \and Alessandro Linzi\orcidA{}}
\date{}

\address[N.\,Cangiotti]{Department of Mathematics\newline\indent Politecnico di Milano \newline\indent
via Bonardi 9, Campus Leonardo, 20133 Milan, Italy}
\email{nicolo.cangiotti@polimi.it}

\address[A.\,Linzi]{Center for Information Technologies and Applied Mathematics\newline\indent  University of Nova Gorica
\newline\indent Vipavska 13, Rožna Dolina, SI-5000 Nova Gorica, Slovenia}
\email{alessandro.linzi@ung.si}

\subjclass[2020]{Primary: 51A05, 51A25 Secondary: 20N20, 11F85.}

\keywords{Projective space, local field, incidence structure, topological group, hyperfield.}

\thanks{The authors are very grateful to A.\ Nobile, M.\ Stecconi, and N.\ Turtulici for fruitful discussions about cardinality and topology.}
\thanks{NC is member and acknowledge the support of
Gruppo Nazionale per l’Analisi Matematica, la Probabilit\`a e le loro Applicazioni
(GNAMPA) of Istituto Nazionale di Alta Matematica (INdAM). Moreover, NC is supported by the MIUR - PRIN 2017 project \lq\lq From
Models to Decisions\rq\rq\ (Prot. N. 201743F9YE)}

\begin{document}

\maketitle
\begin{abstract}
Let $\mathcal{P}$ be the set of points of a finite-dimensional projective space over a local field $F$, endowed with the topology $\tau$ naturally induced from the canonical topology of $F$. Intuitively, continuous incidence abelian group structures on $\mathcal{P}$ are abelian group structures on $\mathcal{P}$ preserving both the topology $\tau$ and the incidence of lines with points. We show that the real projective line is the only finite-dimensional projective space over an Archimedean local field which admits a continuous incidence abelian group structure. The latter is unique up to isomorphism of topological groups. In contrast, in the non-Archimedean case we construct continuous incidence abelian group structures in any dimension $n\in\N$. We show that if $n>1$ and $\Char(F)$ does not divide $n+1$, then there are finitely many possibilities up to topological isomorphism and, in any case, countably many. 
\end{abstract}
\section{Introduction}
It has been almost 90 years since the 8th Congress of Scandinavian Mathematicians, where
Frédéric Marty first introduced the notion of hypergroup \cite{Mar34}, providing also some applications on group theory as well as in the study of rational functions. In the following years many authors started to investigate this new mathematical object as, for instance,   Marc Krasner \cite{Kra37,Kra83} and Walter Prenowitz \cite{Pre43}. Nowadays, hypercompositional algebra is widely studied and its applications affect different fields of mathematics \cite{CL03}. In this manuscript, we focus on an algebraic description of geometric objects (such as lines) using multivalued operations and study the resulting algebraic structures. Recently, Alain Connes and Caterina Consani in \cite{CC11} have found the formalism of Krasner hyperrings and hyperfields \cite{Kra57,Kra83} useful to describe the algebra of the adèle class space of global fields. In their paper, some interesting aspects about the connection between some hyperstructures and incidence groups on projective geometries are also outlined. Motivated by these studies we propose a detailed analysis on incidence group structures on finite-dimensional projective spaces over local fields, which also preserve the natural induced topology. 

\medskip

The paper is organized as follows. We introduce in Section \ref{Sec2} some important concepts about local fields, projective spaces, and hyperstructures, recalling useful classical results. In Section \ref{Sec3} we lay out an in-depth description of $\mathbf{K}$-vector spaces associated to the projective geometries, focusing on their algebraic structure theory. Section \ref{Sec4} is devoted to the study of the multiplicative structures, naturally obtained from the descriptions of the previous section, that
preserve the standard topological and geometrical structure of the corresponding finite-dimensional
projective spaces. In Section \ref{Sec5} and Section \ref{Sec6} we complete the picture by investigating separately two distinct scenarios, the one dimensional case and the higher finite-dimensional case. This case distinction is needed since the Fundamental Theorem of Projective Geometry (Fact \ref{FTPr}) is not applicable in dimension one, where the only limitations are provided by the topological structure. In Appendix \ref{app1} and Appendix \ref{appB} we provide some more details on the well-known theory of local fields necessary for our investigation and we provide a thorough review on the topology induced by a local field on finite-dimensional projective spaces constructed over it.

\section{Preliminaries}
\label{Sec2}

\subsection{Local fields}
We recall the following well-known notions in order to fix the terminology that we shall use. 
\begin{itemize}[leftmargin=2cm]
\item[(HA)] A topological space $(X,\tau)$ is \emph{Hausdorff} if for all distinct $x,y\in X$ there exist $O_x,O_y\in\tau$ such that $x\in O_x$, $y\in O_y$ and $O_x\cap O_y=\emptyset$.
\item[(LC)] We call a topological space $(X,\tau)$ is \emph{locally compact} if for every $x\in X$ there is $O_x\in\tau$ and $C_x\subseteq X$ compact in $(X,\tau)$ such that $x\in O_x\subseteq C_x$.
\item[(SG)] A \emph{semitopological group} is a group $(G,o,i,e)$ endowed with a topology with respect to which the maps $o_a:G\to G$ defined as $o_a(x):=o(a,x)$ are continuous, for all $a\in G$.
\item[(TG)] A \emph{topological group} is a group $(G,o,i,e)$ endowed with a topology with respect to which the operation $o:G\times G\to G$ and the inverse function $i:G\to G$ are continuous, where $G\times G$ is endowed with the product topology.
\item[(TV)] A \emph{topological vector space} over a field $k$ is an $k$-vector space $V$ endowed with a topology with respect to which $(V,+)$ is a topological group and scalar multiplication by any $a\in k$ is a continuous function $V\to V$. 
\item[(TF)] A \emph{topological field} $F$ is a field endowed with a topology with respect to which both $(F,+,-,0)$ and $(F^\times,\cdot,^{-1},1)$ are topological (abelian) groups. 
\end{itemize}

\begin{definition}
A \emph{local field} is a topological field $(F,\tau)$ that is Hausdorff, locally compact and its topology is not discrete, i.e., $\tau$ is not the full power set of $F$.
\end{definition}

Let $F$ be a local field. By Theorem 5 and Theorem 8 in \cite{Wei74}, there are three mutually exclusive possibilities:

\begin{itemize}[leftmargin=2cm]
    \item[(AR)] $F=\mathbb{R}$ or  $F=\mathbb{C}$, with the Euclidean topology.
    \item[(NAR$_0$)] $F$ is a finite extension of $\mathbb{Q}_p$, for some prime $p$, with the topology induced by the $p$-adic norm.
    \item[(NAR$_p$)] $F=\mathbb{F}_q((t))$, where $q=p^n$ for some $n\in\N$, with the topology induced by the $t$-adic norm ($p=\Char F>0$).
\end{itemize}

These possibilities are usually referred to as the \emph{Archimedean} case, the \emph{non-Archimedean, characteristic zero} case and the \emph{non-Archimedean, positive characteristic} case, respectively.

We have collected in Appendix \ref{app1} more details on the structure and the topology of local fields, in the various cases listed above.

\subsection{Projective spaces}

For a field $K$, we will consider the \emph{projective space of dimension }$n\in\mathbb{N}$ \emph{over}~$K$: 
\[
\mathbb{P}_K^n:=K^{n+1}\setminus\{\bar 0\}/K^\times.
\]
By this we mean that $\mathbb{P}_K^n$ is the quotient set of $K^{n+1}\setminus\{\bar 0\}$ under the orbit equivalence relation $\sim$ induced by the orbits of the action of $K^\times$ by scalar multiplication:
\[
\bar x\sim\bar y\iff \exists a\in K^\times:\bar y=a\bar x \quad\quad (\bar x,\bar y\in K^{n+1}\setminus\{\bar 0\}).
\]
We shall denote by $\hat x$ or $[\bar x]$ the $\sim$-equivalence class of $\bar x\in K^{n+1}\setminus\{\bar 0\}$ in $\mathbb{P}_K^n$.

If $\mathcal{P}:=\mathbb{P}_K^n$, then for all distinct $\hat x,\hat y\in\mathcal{P}$ we will consider the following subsets of $\mathcal{P}$:
\begin{equation}\label{lin}
\ell(\hat x,\hat y):=\{\hat z\in\mathcal{P}\mid \exists a,b\in K^\times: \bar z=a\bar x+b\bar y \}\cup\{\hat x,\hat y\}.
\end{equation}

Clearly, the above sets do not depend on the choice of the representatives $\bar x,\bar y,\bar z$ of $\hat x$, $\hat y$ and $\hat z$ since if $\exists a,b\in K^\times: \bar z=a\bar x+b\bar y$ holds for some choice of representatives, then it holds for any such choice.\par 

We consider two projective spaces $\mathbb{P}(V)$ and $\mathbb{P}(W)$ over a field $K$ \emph{isomorphic} if $V$ and $W$ are isomorphic as $K$-vector spaces. Thus, $K^{n+1}$ can be replaced in the above discussion with any $K$-vector space $V$ of the same dimension. The projective space associated to a $K$-vector space $V$ will be denoted by $\mathbb{P}(V)$.

It can be verified that $(\mathcal{P},\mathcal{L})$ is a \emph{projective geometry}, i.e., satisfies the following axioms:
\begin{itemize}[leftmargin=2cm]
\item[(P0)] $\mathcal{L}$ is a collection of subsets of $\mathcal{P}$.
    \item[(P1)] For all distinct $x,y\in\mathcal{P}$ there exists a unique \emph{line} $\ell(x,y)\in\mathcal{L}$ such that $x,y\in\ell(x,y)$.
    \item[(P2)] For all distinct $x,y\in\mathcal{P}$, all $z\in\mathcal{P}\setminus\ell(x,y)$, all $t\in\ell(x,y)\setminus\{x\}$ and all $u\in\ell(x,z)\setminus\{x\}$ we have that $\ell(y,z)\cap\ell(t,u)\neq\emptyset$.
    \item[(P3)] Every line contains at least $4$ points.\footnote{The original axiom postulates that any line has at least $3$ points. However, our choice makes no difference makes no difference as soon as we exclude the field $\mathbb{F}_2$ as base field in our investigation.}
\end{itemize}
For a projective geometry $(\mathcal{P},\mathcal{L})$, we say that a line $\ell\in\mathcal{L}$ is \emph{incident} with a point $x\in\mathcal{P}$ whenever $x\in\ell$. 

\begin{definition}
A projective geometry $(\mathcal{P},\mathcal{L})$ is called \emph{Desarguesian} if the following holds:
\begin{itemize}[leftmargin=2cm]
    \item[(DS)] For all pairwise distinct $x_1,x_2,x_3,y_1,y_2,y_3\in\mathcal{P}$ such that
    \begin{itemize}
        \item[-] $\bigcap_{i=1,2,3}\ell(x_i,y_i)=\{z\}$,
        \item[-] $c\notin\ell(a,b)$, for any subset $\{a,b,c\}$ of $\{x_1,x_2,x_3,z\}$ or of $\{y_1,y_2,y_3,z\}$ with cardinality $3$,
    \end{itemize}
we have that the $3$ points uniquely determined by 
\[
z_{ij}\in\ell(x_i,x_j)\cap\ell(y_i,y_j)\quad (1\leq i\neq j\leq 3)
\]
satisfy $z_{31}\in\ell(z_{12},z_{23})$.
\end{itemize}
\end{definition}

In a (finitely generated) projective geometry $(\mathcal{P},\mathcal{L})$ one can define the notion of dimension as explained in e.g.\ \cite[p.\ 18]{BR98}. The following is a well-known theorem:

\begin{fact}[Theorem 3.4.2 and Corollary 3.4.3 in \cite{BR98}]
For any finite-dimensional Desarguesian projective geometry of dimension at least $2$ there exists a field $k$ and a $k$-vector space $V$ such that $\mathcal{P}=\mathbb{P}(V)$ and the lines in $\mathcal{L}$ are obtained exactly as in \eqref{lin}. The Desarguesianity hypothesis can be dropped if one assumes the dimension to be at least $3$.
\end{fact}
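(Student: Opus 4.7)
The plan is to treat the two clauses of the statement in the reverse order of their appearance, since the higher-dimensional assertion can be reduced to the Desarguesian one. First I would assume $(\mathcal{P},\mathcal{L})$ is a finite-dimensional Desarguesian projective geometry of dimension $d\geq 2$. The core task is to reconstruct, purely from incidence, a skew field $k$ (the statement's word ``field'' is understood in the broad sense, commutativity not being claimed) together with a $(d+1)$-dimensional $k$-vector space $V$ such that $\mathcal{P}$ is recovered as $\mathbb{P}(V)$ with the prescribed lines \eqref{lin}.

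The construction of $k$ follows the classical Hilbert/von Staudt recipe. Fix a hyperplane $H\in\mathcal{L}$ to serve as ``the plane at infinity'' (or, in dimension $2$, a line); work in the affine geometry $(\mathcal{P}\setminus H,\mathcal{L}_{\text{aff}})$, where two lines are called \emph{parallel} when they meet inside $H$. Pick a line $\ell$ not contained in $H$ together with two distinct points of it, labelled $0$ and $1$. I would then define the sum $x+y$ of two points on $\ell$ by the standard parallelogram construction, and the product $xy$ by the intersection pattern of lines through $0$ and through $1$ (Hilbert's \emph{Streckenrechnung}). Associativity and commutativity of addition, distributivity, and associativity of multiplication are then each equivalent to a particular instance of Desargues' configuration (DS); the invariance of the constructions under change of auxiliary data is another Desarguesian consequence. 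This is where the main technical burden lies: there are a fixed number of configurations to be checked, but each requires identifying the correct triangles in perspective, and the check in full detail is where most of the pages of any treatment (including \cite{BR98}) are spent. I would treat these as standard and invoke the corresponding well-established statements.

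Once $k$ is in hand as a skew field, I would coordinatise. Fix an affine frame consisting of $d+1$ independent points $e_0,\ldots,e_d$ generating $\mathcal{P}$ together with a ``unit point'' $u$ in general position. Send a point $p$ to the tuple $(x_1,\ldots,x_d)\in k^d$ whose entry $x_i$ is the coordinate on the $i$-th axis obtained by projecting $p$ from the face spanned by the remaining $e_j$'s. An induction on $d$, again using (DS) at each projection step, shows that this map is a bijection compatible with incidence, and that the collinearity condition translates into the vanishing of a $3\times 3$ determinant-like expression; homogenising, one sets $V:=k^{d+1}$, and the identification $\mathcal{P}=\mathbb{P}(V)$ follows with lines given by \eqref{lin}.

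Finally I would dispatch the case $d\geq 3$ without assuming (DS). The key point is the Veblen–Young theorem: in a projective geometry of dimension $\geq 3$ the Desargues configuration is always realisable, because two triangles lying in a common plane and perspective from a point $z$ can be lifted via an auxiliary point $z'$ outside that plane to a three-dimensional configuration where the conclusion follows from elementary incidence; projecting back recovers (DS). Thus every such $(\mathcal{P},\mathcal{L})$ is automatically Desarguesian and the previous construction applies. I expect the main obstacle to be not any single step but the sheer bookkeeping of the coordinatisation: verifying that the geometrically defined addition and multiplication satisfy the skew-field axioms, and that the resulting coordinate map is well-defined independently of the chosen auxiliary lines, are each separate diagram chases in (DS). For the purposes of the present paper these verifications are standard and would simply be cited from \cite[Chapter~3]{BR98}.
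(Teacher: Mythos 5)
The paper gives no proof of this Fact; it is quoted directly from \cite{BR98}, which is exactly the source you yourself defer to for the detailed configuration checks, and your outline (von Staudt/Hilbert coordinatisation of a line via Desarguesian configurations, projective coordinatisation over the resulting division ring, and the classical lifting argument showing that dimension $\geq 3$ forces Desargues) is a correct summary of the proof given there. Your caveat that Desargues only yields a skew field, commutativity requiring Pappus, is well taken, and is harmless in context because every projective space the paper actually applies this to is $\mathbb{P}^n_F$ for a commutative (local) field $F$.
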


In dimension $2$ there are projective geometries which are not \emph{realised} as projective spaces over a field, these are known as non-Desarguesian planes and have been extensively studied (see e.g.\ \cite[Chapter XX]{Hal59}).

\begin{remark}\label{projrel}
The above description is of interest since a projective geometry, as defined above, is a first-order structure over the language with one ternary relation symbol $\mathfrak{r}$ interpreted as:
\[
\mathfrak{r}(x,y,z)\iff x\neq y\text{ and }z\in\ell(x,y).
\]
\end{remark}

\begin{remark}\label{Conrad}
The following observation is essentially a consequence of Tychonoff Theorem (for finite products\footnote{In particular, we are not making use of the axiom of choice here.}). If $F$ is a local field and $\tau$ the topology induced by its canonical norm, then any $F$-vector space of dimension $n\in\N$ is homeomorphic to $F^n$ (endowed with the product topology induced by $\tau$). More details on this fact are discussed e.g.\ in \cite[Section 4]{Conr}.

Thus, any projective space $\mathbb{P}(V)$, where $V$ a vector space of dimension $n+1$ over a local field $F$, is isomorphic and homeomorphic to $\mathbb{P}^n_F$.
\end{remark}

For $(F,\tau)$ as in the above remark, we shall call the quotient topology on $\mathbb{P}_F^n$ the \emph{standard topology} on the $n$-dimensional projective space over $F$. In Appendix \ref{appB} below, we provide a proof of the following well-known fact.

\begin{fact}\label{Pcomp}
All finite-dimensional projective spaces $\mathbb{P}_F^n$ over a local field $F$ are Hausdorff and compact with respect to their standard topology.
\end{fact}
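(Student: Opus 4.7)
The plan is to treat compactness and the Hausdorff property separately, by two standard arguments about quotients by continuous group actions. The only non-trivial input is that in each local field $F$ the closed unit ball is compact (the valuation ring $\mathcal{O}_F$ in the non-Archimedean cases), which follows from local compactness together with the non-discreteness of $\tau$ and is recorded in Appendix~\ref{app1}. Throughout, write $\pi\colon F^{n+1}\setminus\{\bar 0\}\to\mathbb{P}_F^n$ for the canonical projection, which is continuous by definition of the standard topology.

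For compactness, I would exhibit a compact subset $S\subseteq F^{n+1}\setminus\{\bar 0\}$ with $\pi(S)=\mathbb{P}_F^n$. In the Archimedean case (AR) one takes $S$ to be the usual Euclidean unit sphere in $F^{n+1}$, which is closed and bounded, hence compact. In the non-Archimedean cases (NAR$_0$) and (NAR$_p$) one instead takes $S=\{\bar x\in F^{n+1}:\max_{0\le i\le n}|x_i|=1\}$, where $|\cdot|$ denotes the canonical absolute value of $F$; this set is closed inside the compact set $\mathcal{O}_F^{n+1}$, and therefore compact. Surjectivity of $\pi|_S$ is immediate: every nonzero $\bar y\in F^{n+1}$ can be multiplied by a suitable element of $F^\times$ (the reciprocal of its Euclidean norm in the Archimedean case, and a scalar of absolute value $1/\max_i|y_i|$ in the non-Archimedean case) to land in $S$. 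Compactness of $\mathbb{P}_F^n$ then follows because it is the continuous image of the compact set $S$.

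For the Hausdorff property I would invoke the standard criterion: if a quotient map is open and the graph of the defining equivalence relation is closed, then the quotient is Hausdorff. Openness of $\pi$ holds because for any open $U\subseteq F^{n+1}\setminus\{\bar 0\}$ the saturation $\pi^{-1}(\pi(U))=\bigcup_{a\in F^\times}aU$ is a union of open sets. For the graph, observe that nonzero vectors $\bar x,\bar y\in F^{n+1}$ are $F^\times$-proportional if and only if every $2\times 2$ minor of the matrix with rows $\bar x,\bar y$ vanishes, i.e.\ $x_iy_j-x_jy_i=0$ for all $0\le i<j\le n$. These are closed polynomial conditions, so the common solution set intersected with $(F^{n+1}\setminus\{\bar 0\})^2$, which is exactly the graph of $\sim$, is closed.

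The main obstacle is conceptual rather than technical: one must identify the correct normalization in each case (Euclidean norm in the Archimedean case, sup-norm in the non-Archimedean case) and invoke compactness of $\mathcal{O}_F$ in the latter. After that, both halves of the argument are routine and work uniformly across all three classes (AR), (NAR$_0$), and (NAR$_p$); nothing needs to be said beyond the closedness of the Plücker-type relations $x_iy_j=x_jy_i$ and the openness of the $F^\times$-action.
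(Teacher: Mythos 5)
Your proposal is correct and follows essentially the same route as the paper's proof in Appendix~\ref{appB}: compactness via a compact ``sphere'' in $F^{n+1}\setminus\{\bar 0\}$ surjecting continuously onto $\mathbb{P}_F^n$, and the Hausdorff property via the openness of the quotient map (saturations are unions of translates $aU$) together with the closedness of the graph of $\sim$ cut out by the $2\times 2$ minors $x_iy_j-x_jy_i$. The only differences are cosmetic: the paper uses a single $1$-norm sphere $\{\bar x:\sum_i\lvert x_i\rvert=1\}$ uniformly in all cases and packages the minor conditions into one function $f(\bar x;\bar y)=\sum_{i\neq j}(x_iy_j-x_jy_i)$, whereas you split the normalization between the Euclidean and sup norms and keep the minors as a system of separate closed conditions, which is if anything the more careful formulation.
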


For $V=F^\omega$, i.e., the space of infinite sequences of elements of $F$, we will write $\mathbb{P}_F^\omega$ in place of $\mathbb{P}(F^\omega)$ by analogy with the finite-dimensional case, but we postpone the discussion on the topology of this infinite-dimensional projective space to future investigations.

\subsection{Hypercompositional algebra}

Let $H$ be a non-empty set and $\mathit{2}^H$ its power set. A \emph{multivalued operation} $\boxplus$ on $H$ is a function which associates to every pair $(x,y) \in H \times H$ an element of $\mathit{2}^H$, denoted by $x\boxplus y$. If $\boxplus$ is a multivalued operation on $H\neq\emptyset$, then for $x \in H$ and $A,B\subseteq H$ we set 
\[
A\boxplus B:=\bigcup_{(a,b)\in A\times B} a\boxplus b,
\]
$A \boxplus  x := A \boxplus  \lbrace x \rbrace$ and $x \boxplus A := \lbrace x \rbrace \boxplus  A$. If $A$ or $B$ is empty, then so is $A\boxplus B$.\par
A \emph{hypergroup} can be defined as a non-empty set $H$ with a multivalued operation $\boxplus $ which is associative (see Definition \ref{hypergp} (CH1) below) and \emph{reproductive on $H$} (i.e., $x \boxplus  H = H \boxplus  x = H$ for all $x \in H$).
This notion was introduced by F.\ Marty in \cite{Mar34,Mar35,Mar36}. The theory of hypergroups, with a detailed historical overview, is presented in \cite{Mas21}, where an extensive bibliography is also provided.
\begin{definition}
A \emph{hyperoperation} $\boxplus $ on a non-empty set $H$  is a multivalued operation such that $x\boxplus y\neq\emptyset$ for all $x,y\in H$.
\end{definition} 
\begin{lemma}[Theorem 12 in \cite{Mas21}]\label{x+ynempty}
If $(H,\boxplus)$ is a hypergroup, then $\boxplus$ is a hyperoperation~on~$H$. 
\end{lemma}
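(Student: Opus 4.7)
My plan is to argue by contradiction. Suppose there exist $x,y\in H$ with $x\boxplus y=\emptyset$. The strategy is to combine this emptiness with the reproductivity axiom, via associativity, to conclude that $H$ itself must be empty, which contradicts the standing hypothesis that a hypergroup is a \emph{non-empty} set.

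First I would note a very simple consequence of the definition of the set-extension $A\boxplus B:=\bigcup_{(a,b)\in A\times B} a\boxplus b$: if $A=\emptyset$, then the index set $A\times B$ is empty and hence $A\boxplus B=\emptyset$ for every $B\subseteq H$. Consequently, under the contradictory assumption, we have
\[
(x\boxplus y)\boxplus H=\emptyset.
\]

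Next I would invoke associativity. The elementwise axiom $(a\boxplus b)\boxplus c = a\boxplus (b\boxplus c)$ lifts to subsets by unfolding both sides as unions, giving
\[
(x\boxplus y)\boxplus H \;=\; \bigcup_{h\in H}\,(x\boxplus y)\boxplus h \;=\; \bigcup_{h\in H}\, x\boxplus (y\boxplus h) \;=\; x\boxplus (y\boxplus H).
\]
Now two applications of the reproductivity axiom finish the argument: first $y\boxplus H=H$, and then $x\boxplus H=H$, so the right-hand side equals $H$. Combining with the previous display yields $H=\emptyset$, contradicting the non-emptiness of $H$.

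I do not expect any serious obstacle: the only point requiring a word of justification is the subset-level associativity, which is purely a matter of distributing the defining unions. The essence of the lemma is that the empty set cannot appear as a product in a reproductive associative multivalued structure, because it would propagate through any further product and collapse the reproductivity identity $x\boxplus H=H$ to $\emptyset=H$.
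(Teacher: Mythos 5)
Your argument is correct and is exactly the classical one: the paper does not prove this lemma itself but cites Theorem~12 of \cite{Mas21}, whose proof is the same computation, namely that $x\boxplus y=\emptyset$ forces $(x\boxplus y)\boxplus H=\emptyset$ while associativity and two applications of reproductivity give $(x\boxplus y)\boxplus H=x\boxplus(y\boxplus H)=x\boxplus H=H\neq\emptyset$. The one step you flag as needing justification, the lifting of elementwise associativity to $(x\boxplus y)\boxplus H=x\boxplus(y\boxplus H)$, does indeed follow by unfolding both sides as $\bigcup_{h\in H}x\boxplus(y\boxplus h)$, so there is no gap.
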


The following special class of hypergroups will be of interest for us.

\begin{definition}[\cite{Kra57}, Definition 2 in \cite{Lin23}] \label{hypergp}
A \emph{canonical hypergroup} is a triple $(H,\boxplus ,0)$, where $H\neq\emptyset$, $\boxplus$ is a multivalued operation on $H$ and $0$ is an element of $H$ such that the following axioms hold:
\begin{itemize}[leftmargin=2cm]
\item[(CH1)] $\boxplus$ is associative, i.e., $(x\boxplus y)\boxplus z=x\boxplus (y\boxplus z)$ for all $x,y,z \in H$,
\item[(CH2)] $x\boxplus y=y\boxplus x$ for all $x,y\in H$,
\item[(CH3)] for every $x\in H$ there exists a unique $y\in H$ such that $0\in x\boxplus y$. The element $y$ will be denoted by $x^-$.
\item[(CH4)] $z\in x\boxplus y$ implies $y\in z\boxplus x^-$ for all $x,y,z\in H$.
\end{itemize}
\end{definition}

\begin{remark}
Let $(H,\boxplus,0)$ be a canonical hypergroup. The hyperoperation $\boxplus$ can be encoded in a first-order language using a ternary relation
\[
\mathfrak{h}(x,y,z)\iff z\in x\boxplus y\quad\quad(x,y,z\in H).
\]
This observation will be of interest in view of Remark \ref{projrel} above.
\end{remark}

Let us now recall the definition of (Krasner) hyperrings and hyperfields.

\begin{definition}[\cite{Kra57,Kra83}, Definition 3 in \cite{Lin23}]\label{Krasnerhyp}
A \emph{(commutative) hyperring} is a tuple $(R,\boxplus ,\cdot,0)$ which satisfies the following axioms:
\begin{itemize}[leftmargin=2cm]
\item[(HR1)] $(R,\boxplus ,0)$ is a canonical hypergroup,
\item[(HR2)] $(R,\cdot)$ is a (commutative) semigroup and $0\cdot x=x\cdot0= 0$, for all $x\in R$,
\item[(HR3)] the operation $\cdot$ is distributive with respect to $\boxplus$. That is, for all $x,y,z\in R$,
\[
x(y\boxplus z)=xy\boxplus xz,
\]
\end{itemize}
where, for $x\in R$ and $A\subseteq R$, we have set
\[ 
xA:=\{xa\mid a\in A\}.
\]
If $xy=0$ implies $x=0$ or $y=0$, then $(R,\boxplus ,\cdot,0)$ is called an integral hyperdomain. If $(R,\cdot)$ is a monoid with neutral element $1 \neq 0$, then we say that $(R,\boxplus ,\cdot,0,1)$ is a \emph{hyperring with unity}.
If  $(R,+,\cdot,0,1)$ is a hyperring with unity and 
$(R\setminus\{0\},\cdot,1)$ is an abelian group, then $(R,+,\cdot,0,1)$ is called a \emph{hyperfield}.
If $R$ is a hyperring with unity, then we denote by $R^\times$ the multiplicative group of the \emph{units} of $R$, i.e.,
\[
R^\times:=\{x\in R\mid\exists y\in R~:~xy=1\}.
\]
In particular, if $R$ is a hyperfield, then $R^\times=R\setminus\{0\}$.
\end{definition}

Since we will only consider the commutative case, in the following sections we will call commutative hyperrings simply hyperrings.\par

\begin{example}[\cite{Kra83}]\label{Kraquot}
Let $A$ be a commutative ring and $T$ a subgroup of the commutative semigroup $(A,\cdot)$. Let $A_T$ denote the set of all multiplicative cosets $xT$ for $x\in A$, in particular $0T=\{0\}$. Krasner showed that by setting
\[
xT\boxplus yT:=\{zT\in A_T\mid \exists t,s\in T:z=xt+ys\}
\]
and
\[
xT\cdot yT:=xyT
\]
we obtain that the structure $(A_T,\boxplus,\cdot,0T)$ is a hyperring where $(xT)^-=(-x)T$ for all $x\in A$. In addition, if $A$ is an integral domain (resp.\ field), then this construction always yields an integral hyperdomain (resp.\ hyperfield). This is called the \emph{factor hyperring (or hyperfield)} of $A$ modulo $T$.
\end{example}

Let us make a few basic observations about factor hyperrings and hyperfields.

\begin{lemma}
Let $A$ be an integral domain with $1$ and $T$ a subgroup of $(A,\cdot,1)$. Then $A_T$ is a hyperfield (with respect to the operations defined in Example \ref{Kraquot} above) if and only if $A$ is a field.
\end{lemma}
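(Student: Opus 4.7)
The plan is to split the claim into its two implications. The forward direction, namely that $A_T$ is a hyperfield whenever $A$ is a field, is already recorded in Example \ref{Kraquot}, so no further work is needed there. The substantive content lies in the converse.

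For the converse, assume $A_T$ is a hyperfield and aim to show that every nonzero $x\in A$ has a multiplicative inverse in $A$. My first step is to pin down what the identity element of the subgroup $T$ can be. Being a group, $T$ has an identity $e$, which must satisfy $e\cdot e=e$. Since $A$ is an integral domain, its only idempotents are $0$ and $1$. If $e=0$, then absorption would force $t=e\cdot t=0$ for every $t\in T$, so $T=\{0\}$ and $A_T$ would collapse to the one-element set $\{\{0\}\}$; this contradicts the hyperfield axioms from Definition \ref{Krasnerhyp}, which require $1\neq 0$. Therefore $e=1$, and the existence of inverses inside $T$ forces $T\subseteq A^{\times}$.

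Next I would transfer multiplicative invertibility from $A_T$ back to $A$. Let $x\in A\setminus\{0\}$. Because $A$ is an integral domain and $T\neq\{0\}$, the coset $xT$ does not contain $0$ and is therefore distinct from $0T=\{0\}$. Since $A_T$ is assumed to be a hyperfield, $xT\in A_T^{\times}$, so there exists $y\in A$ with $xT\cdot yT=1T$, which unfolds to $xy\in T$. Combined with the inclusion $T\subseteq A^{\times}$ obtained in the previous step, this yields $xy\in A^{\times}$, and hence $x\in A^{\times}$. As $x$ was arbitrary, $A$ is a field.

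The only real obstacle is the preliminary step showing $T\subseteq A^{\times}$; without it, knowing merely that $xy\in T$ would not be enough to conclude that $x$ is invertible in $A$, so the whole transfer from $A_T$ back to $A$ would fail. Once this inclusion is in hand, the rest of the argument is a direct unpacking of the hyperfield axioms and involves no further subtlety.
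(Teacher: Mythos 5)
Your argument is correct and follows essentially the same route as the paper: cite Krasner for the forward direction, and for the converse take the inverse coset $yT$ of $xT$, deduce $xy\in T$, and use the group structure of $T$ to invert $x$ in $A$. Your preliminary verification that the identity of $T$ is $1$ and hence $T\subseteq A^\times$ is a harmless extra precaution that the paper leaves implicit in reading ``subgroup of $(A,\cdot,1)$''.
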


\begin{proof}
One implication was already shown to hold by Krasner in \cite{Kra83}. For the other implication, assume that $A_T$ is a hyperfield and for $x\in A\setminus\{0\}$ find the inverse $yT$ of $xT$ in $A_T$. Then $(xy)T=xT\cdot yT=1T$ implies that there exists $t\in T$ such that $xyt=1$, so $yt=x^{-1}$ and $A$ is a field.    
\end{proof}

\begin{lemma}\label{x+x=0x}
Let $A$ be a commutative ring with $1$ and $T$ a subgroup of $(A\setminus\{0\},\cdot,1)$. Then the following are equivalent:
\begin{enumerate}
    \item[$(i)$] $T$ is the multiplicative subgroup of a subfield of $A$.
    \item[$(ii)$] $1T\boxplus 1T=\{0T,1T\}$.
\end{enumerate}
\end{lemma}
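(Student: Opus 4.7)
The plan is to exploit the elementary observation that for any $x, y \in T$ one has $xT = yT = 1T$, so the hyperoperation value $xT \boxplus yT$ coincides with $1T \boxplus 1T$; moreover, since $1 \in T$, specialising $t = s = 1$ in the defining formula
\[
xT \boxplus yT = \{(xt + ys)T \mid t, s \in T\}
\]
guarantees that the coset of the genuine sum, $(x+y)T$, always belongs to $xT \boxplus yT$. This single remark is what lets us convert statements about the hyperoperation into statements about honest sums in $A$.

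For the implication $(i) \Rightarrow (ii)$, I would assume $T = K^\times$ for a subfield $K \subseteq A$. Then for any $t, s \in T$ the sum $t + s$ lies in $K$, so $(t+s)T$ is either $0T$ (when $t + s = 0$) or $1T$ (when $t + s \in K^\times = T$); this yields $1T \boxplus 1T \subseteq \{0T, 1T\}$. For the reverse inclusion I would exhibit explicit witnesses: since $-1 \in K^\times = T$, the pair $t = 1$, $s = -1$ realises $0T$; and, provided $K \neq \mathbb{F}_2$ (in line with the paper's blanket exclusion recorded in the footnote to (P3)), one can pick $c \in K^\times$ with $1 + c \neq 0$, so that $t = 1$, $s = c$ realises $1T$.

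For $(ii) \Rightarrow (i)$, I would set $K := T \cup \{0\}$ and verify that $K$ is a subfield of $A$. Closure under addition is an immediate consequence of the key observation: for $x, y \in T$ the coset $(x+y)T$ lies in $xT \boxplus yT = 1T \boxplus 1T = \{0T, 1T\}$, so $x + y \in T \cup \{0\} = K$. The hypothesis $0T \in 1T \boxplus 1T$ produces $t, s \in T$ with $t + s = 0$, whence $-1 = s \cdot t^{-1} \in T$ and therefore $-K \subseteq K$. Multiplicative closure and the existence of multiplicative inverses in $K \setminus \{0\} = T$ are immediate from $T$ being a subgroup of $(A \setminus \{0\}, \cdot)$, so $K$ is a subfield with $T = K^\times$, as required. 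The only delicate point, and thus the main (mild) obstacle, is bookkeeping around the degenerate case $T = \{1\}$ in the forward direction, where $1T \boxplus 1T = \{2T\}$ can fail to equal $\{0T, 1T\}$; this is precisely why the exclusion of $\mathbb{F}_2$ as a base field must be kept in mind.
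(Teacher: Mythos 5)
Your proof is correct and follows essentially the same route as the paper's: translate $1T\boxplus 1T=\{0T,1T\}$ into the statement $T+T\subseteq T\cup\{0\}$ (together with $-1\in T$, extracted from $0T\in 1T\boxplus 1T$) and identify $T\cup\{0\}$ as a subfield. Your extra care about the reverse inclusion in $(i)\Rightarrow(ii)$ is in fact warranted: the paper asserts the equality $\{bT\mid\exists t,s\in T: b=t+s\}=\{0T,1T\}$ without comment, and, as you observe, it genuinely fails when the subfield is $\mathbb{F}_2$ (there $1T\boxplus 1T=\{0T\}$), so the lemma implicitly relies on the paper's blanket exclusion of $\mathbb{F}_2$ recorded in the footnote to (P3).
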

\begin{proof}
Assume $(i)$. Then $t+s\in T\cup\{0\}$ for all $t,s\in T$ and by definition, for all $a\in A$ we have that
\begin{equation}\label{subf}
1T\boxplus 1T=\{bT\in A_T\mid \exists t,s\in T : b=t+s\}=\{0T,1T\}.
\end{equation}
This shows $(ii)$.\par
For the converse, note that form $(ii)$ it follows that
\[
\{0T,1T\}=1T\boxplus 1T=\{(t-s)T\mid t,s\in T\}.
\]
Therefore, $T-T\subseteq T$, i.e., $(T\cup\{0\},+,0)$ is a subgroup of $(A,+,0)$, which shows $(i)$.
\end{proof}

Our interest for factor hyperrings satisfying property $(ii)$ of the above lemma is motivated by the following result.

\begin{proposition}[Proposition 3.1 in \cite{CC11}, see also \cite{Lyn61,Pre43}]\label{CC}
 Let $(\mathcal{P},\mathcal{L})$ be a projective geometry and $0$ be a point that is not already in $\mathcal{P}$. Then $H=H(\mathcal{P},\mathcal{L}):=\mathcal{P}\cup\{0\}$ endowed with the hyperoperation $\boxplus$ defined, for $x,y\in\mathcal{P}$, as
\[
x\boxplus y:=\begin{cases}\ell(x,y)\setminus\{x,y\}&\text{if }x\neq y,\\ \{0,x\}&\text{otherwise}\end{cases}
\]
and $x\boxplus 0=0\boxplus x:=\{x\}$, for all $x\in H$, is a canonical hypergroup with neutral element $0$.

Conversely, if $(H,\boxplus,0)$ is a canonical hypergroup satisfying $x\boxplus x=\{0,x\}$ for all $x\in H$, then, by setting $\mathcal{P}:=H^\times$,
\[
\ell(x,y):=x\boxplus y\cup\{x,y\}
\]
and $\mathcal{L}(H):=\{\ell(x,y)\mid x,y\in\mathcal{P}(H), x\neq y\}$ one obtains that $(\mathcal{P}(H),\mathcal{L}(H))$ is a projective geometry.
\end{proposition}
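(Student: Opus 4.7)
The plan is to verify the relevant axioms on each side of the equivalence in turn.

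For the first implication, take a projective geometry $(\mathcal{P},\mathcal{L})$ and define $\boxplus$ on $H=\mathcal{P}\cup\{0\}$ as prescribed. Commutativity (CH2) and the neutral element property $0\boxplus x=\{x\}$ are immediate from the symmetric definition. For (CH3) I would note that $0\in x\boxplus x=\{0,x\}$ for all $x\in\mathcal{P}$ and that $0\notin\ell(x,y)\setminus\{x,y\}=x\boxplus y$ whenever $x\neq y$ since lines are subsets of $\mathcal{P}$; hence $x^-=x$ for every $x\in H$, with $0^-=0$. Axiom (CH4) reduces to the collinearity observation that if $z\in x\boxplus y$ with $x,y,z$ pairwise distinct then the three points lie on the common line $\ell(x,y)=\ell(x,z)$, so $y\in z\boxplus x=z\boxplus x^-$; the degenerate cases where two of $x,y,z$ coincide are checked by inspection.

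The heart of the argument is associativity (CH1), $(x\boxplus y)\boxplus z=x\boxplus(y\boxplus z)$. I would split the verification according to the configuration of the operands: the subcases where $0$ appears are trivial; if $x,y,z$ are distinct and collinear, both iterated hyper-sums trace out the common line $\ell(x,y)=\ell(y,z)$ minus a prescribed set of exceptional points, an equality that relies on each projective line containing at least three points; if $x,y,z$ span a plane, both sides describe the points of that plane outside a certain configuration, and the nontrivial inclusion is precisely what the Veblen--Young axiom (P2) guarantees. This planar case is the step I expect to be the main obstacle, since (CH1) and (P2) are essentially two formulations of the same incidence fact.

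For the converse, assume $(H,\boxplus,0)$ is a canonical hypergroup with $x\boxplus x=\{0,x\}$ for every $x\in H$, and set $\mathcal{P}=H^\times$ together with $\ell(x,y)=x\boxplus y\cup\{x,y\}$. The existence part of (P1) holds by construction. Uniqueness of the line through two distinct points amounts to showing that if $x,y\in\ell(a,b)$ are distinct then $\ell(x,y)=\ell(a,b)$; this is obtained from reversibility (CH4) combined with $x^-=x$, which lets one swap the defining pair of a line. The Veblen--Young axiom (P2) is then derived by translating the hypothesised incidences into hyper-sum memberships and invoking associativity (CH1) — the same equivalence that appeared in the direct implication. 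Axiom (P3) follows from the cardinality standing assumption (with the footnoted three-point weakening when needed). In both directions the crux is the planar case of associativity versus Veblen--Young; everything else is routine bookkeeping.
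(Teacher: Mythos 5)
First, note that the paper does not prove this proposition at all: it is imported verbatim as Proposition 3.1 of \cite{CC11}, so there is no in-text argument to compare yours against. Your outline follows the standard route (the one in \cite{CC11}): the only substantive content in either direction is the equivalence between associativity (CH1) and the Veblen--Young axiom (P2), and you correctly isolate the planar case as the crux. That said, you leave that planar case entirely to (P2) without carrying it out, and it is not routine --- one must chase several degenerate subconfigurations (the intermediate point landing on $\ell(x,z)$ or $\ell(y,z)$, coinciding with $x$, etc.) --- so as written this is a plan rather than a proof.

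There is also one concrete error. In the collinear case of (CH1) you assert that the equality of the two iterated hyper-sums ``relies on each projective line containing at least three points.'' Three points do not suffice: if $\ell=\{x,y,z\}$ then $x\boxplus y=\{z\}$, so $(x\boxplus y)\boxplus z=z\boxplus z=\{0,z\}$ while $x\boxplus(y\boxplus z)=x\boxplus x=\{0,x\}$, and associativity fails. One genuinely needs at least four points per line --- which is exactly why the paper strengthens (P3) from the classical three-point axiom to four points and excludes $\mathbb{F}_2$ (see the footnote to (P3)). With four points the collinear case does close up (both sides become $\ell\cup\{0\}$ minus the appropriate exceptional point), but you should invoke the paper's (P3), not the three-point version. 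Relatedly, in the converse direction you dismiss (P3) as following from ``the cardinality standing assumption,'' but there is no such assumption on the hypergroup side: that every $\ell(x,y)=x\boxplus y\cup\{x,y\}$ has at least four elements must be \emph{derived} from the axioms (e.g.\ $x,y,0\notin x\boxplus y$ by (CH3)--(CH4), and $\lvert x\boxplus y\rvert\geq 2$ because $x\boxplus y=\{z\}$ would force $\{0,x\}\subseteq x\boxplus(y\boxplus z)=(x\boxplus y)\boxplus z=\{0,z\}$, contradicting $x\notin x\boxplus y$). Your uniqueness argument for (P1) is likewise too terse: reversibility alone does not let you ``swap the defining pair''; you also need associativity together with $x\boxplus x=\{0,x\}$ to show that any two distinct points of a line regenerate it.
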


\begin{example}\label{realprlin}
The canonical hypergroup associated to the projective geometry realised as the real projective line $\mathbb{P}_\mathbb{R}^1$ is isomorphic to the additive hypergroup of the factor hyperfield $H:=\mathbb{C}_{\mathbb{R}^\times}$. Indeed, in the factor hyperfield $\mathbb{C}_{\mathbb{R}^\times}$ we have
\[
x\mathbb{R}^\times\boxplus y\mathbb{R}^\times=\begin{cases}\mathbb{C}^\times/\mathbb{R}^\times\setminus\{x\mathbb{R}^\times,y\mathbb{R}^\times\}&\text{if }x\mathbb{R}^\times\neq y\mathbb{R}^\times\\
\{0,x\mathbb{R}^\times\}&\text{if }x\mathbb{R}^\times=y\mathbb{R}^\times\end{cases}
\]
for all $x,y\in\mathbb{C}^\times$. Hence, the associated projective geometry has a unique line. Clearly, any element $(a+ib)\mathbb{R}^\times\in H^\times$ with $b\neq 0$ corresponds to a unique point $(ab^{-1}:1)$ of the real projective line. While real numbers correspond to the point $(1:0)$ of the real projective line.
\end{example}

For a local field $F$ and $n\in\N$ we will denote by $\mathbb{H}_F^n$ the canonical hypergroup obtained from the above proposition from the projective geometry of $\mathbb{P}_F^n$. After endowing $\mathbb{P}_F^n$ with the standard topology, the extension of the set of points with the new point $0$, described in the above proposition, suggests to look at the topology $\tau_\mathcal{A}$ on $\mathbb{H}_F^n$ obtained as the \emph{Alexandroff extension} \cite{Ale24} of $(\mathbb{P}_F^n,\tau)$, where $\tau$ denotes the standard topology. By definition a subset $O\subseteq\mathbb{H}_F^n$ is open in this topology if and only if $O\subseteq \mathbb{P}_F^n$ is open in $\mathbb{P}_F^n$ or $O\setminus\{0\}$ is the complement of a closed and compact set in $\mathbb{P}_F^n$. Note that, since by Fact \ref{Pcomp} we have that $\mathbb{P}_F^n$ is compact (and closed) we obtain that $\{0\}$ is an open and closed set in $\mathbb{H}_F^n$. 

Let us capture the structure that we have just described in the following general definition.

\begin{definition}
Let $(H,\boxplus,\cdot,0,1)$ be a hyperfield and assume that the multiplicative group $H^\times$ is a topological group with respect to some topology $\tau$. Then $H=H^\times\cup\{0\}$ endowed with the Alexandroff extension $\tau_\mathcal{A}$ of $\tau$ is called a \emph{topological hyperfield}. \par
We say that a topological hyperfield $H$ is \emph{Hausdorff, compact\ldots} whenever the same property holds for the topological group $H^\times$.
\end{definition}

\subsection{\textbf{K}-vector spaces}

Let $\mathbf{K}:=\{\mathbf{0},\mathbf{1}\}$ be endowed with the hyperoperation $\mathbf{1}\bm{+} \mathbf{1}=\{\mathbf{0},\mathbf{1}\}$, $\mathbf{0}\bm{+}\mathbf{0}=\{\mathbf{0}\}$ and $\mathbf{1}\bm{+}\mathbf{0}=\mathbf{0}\bm{+}\mathbf{1}=\{\mathbf{1}\}$. Then, with the natural multiplication, $\mathbf{K}$ is a hyperfield.\par
If $(H,\boxplus,0)$ is a canonical hypergroup, then we can interpret scalar multiplication by $\mathbf{1}$ as the identity $H\to H$ and scalar multiplication by $\mathbf{0}$ as the constant $0$ map $H\to H$. The action $\mathbf{K}\times H\to H$ thus obtained satisfies all the properties that a scalar multiplication has in the case in which $H$ is a group, except for $(a\bm{+}b)x=ax\boxplus bx$ ($a,b\in\mathbb{K}$, $x\in H$) which is equivalent to $x\boxplus x=\{0,x\}$ for all $x\in H$. This observation motivates the following definition.

\begin{definition} 
Let $(H,\boxplus,0)$ be a canonical hypergroup.
\begin{itemize}[leftmargin=2cm]
\item[$(i)$] If $x\boxplus x=\{0,x\}$ for all $x\in H$, then we say that $H$ is a $\mathbf{K}$-vector space.\par
\item[$(ii)$] A subset $S$ of a $\mathbf{K}$-vector space $H$ is \emph{linearly independent} if $0\notin S$ and for all $n\in\N$ and all pairwise distinct $s_1,\ldots,s_n\in S$ we have that $0\notin s_1\boxplus\ldots\boxplus s_n$.\par
\item[$(iii)$] A subset $S$ of a $\mathbf{K}$-vector space $H$ \emph{spans (or generates)} $H$ if for all $x\in H\setminus S$ there exists $n\in\N$ and $s_1,\ldots,s_n\in S$ such that $x\in s_1\boxplus\ldots\boxplus s_n$.
\item[$(iv)$] A \emph{basis} $B$ for a $\mathbf{K}$-vector space $H$ is a linearly independent subset of $H$ which spans~$H$.
\item[$(v)$] A $\mathbf{K}$-vector space $H$ has \emph{dimension} $n\in\N$ if there exists a basis $B$ of $H$ with precisely $n$ elements.
\end{itemize}
\end{definition}

\begin{remark}
As for vector spaces over fields, if a $\mathbf{K}$-vector space $H$ have a basis of cardinality $n\in\N$, then all bases of $H$ have cardinality $n$, this follows from the more general results obtained by J.\ Mittas on vector spaces over hyperfields (see \cite{Mit75}).  
\end{remark}

\begin{lemma}\label{Kdim}
Let $A$ be an integral domain with $1$ and $k$ a proper subfield of $A$. Assume that $A$ is also a $k$-vector space of dimension $n\in\N_{>1}$. Then the additive hypergroup of $A_{k^\times}$ is a $\mathbf{K}$-vector space of dimension $n$.    
\end{lemma}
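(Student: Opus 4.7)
I would split the proof into two parts: first, that the additive hypergroup of $A_{k^\times}$ satisfies $u \boxplus u = \{0, u\}$ for every $u$, making it a $\mathbf{K}$-vector space; second, that this $\mathbf{K}$-vector space has dimension $n$.

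For the first part, I would bootstrap from the case $u = 1k^\times$, where Lemma \ref{x+x=0x} applies directly: since $k^\times$ is the multiplicative group of the subfield $k \subseteq A$, we get $1k^\times \boxplus 1k^\times = \{0k^\times, 1k^\times\}$. To extend to an arbitrary $u = xk^\times$ with $x \in A^\times$, I would multiply through by $xk^\times$ and invoke the distributivity axiom (HR3) of the Krasner factor hyperring (Example \ref{Kraquot}):
\[
xk^\times \boxplus xk^\times = (xk^\times)(1k^\times) \boxplus (xk^\times)(1k^\times) = (xk^\times)\bigl(1k^\times \boxplus 1k^\times\bigr) = \{0k^\times, xk^\times\}.
\]
The case $x = 0$ is immediate.

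For the second part, I would produce an explicit basis. Fix a $k$-basis $e_1, \dots, e_n$ of $A$ and set $B := \{e_1 k^\times, \dots, e_n k^\times\}$. The cosets $e_i k^\times$ are pairwise distinct: $e_i k^\times = e_j k^\times$ with $i \neq j$ would force $e_i = c e_j$ for some $c \in k^\times$, contradicting the $k$-linear independence of the $e_i$. For $\mathbf{K}$-linear independence of $B$, any element of $e_{i_1}k^\times \boxplus \cdots \boxplus e_{i_m}k^\times$ has the form $(\sum_{j=1}^m t_j e_{i_j})k^\times$ with $t_j \in k^\times$, so $0k^\times$ belonging to this hyperoperation would yield $\sum_j t_j e_{i_j} = 0$ with every $t_j \neq 0$, again contradicting $k$-linear independence. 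For spanning, given $xk^\times \in A_{k^\times} \setminus B$ I would write $x = \sum_{i \in I} a_i e_i$ with $a_i \in k^\times$ for $i \in I$: if $I = \emptyset$ then part one gives $0k^\times \in e_1 k^\times \boxplus e_1 k^\times$, while if $|I| \geq 2$ (the case $|I| = 1$ is ruled out because it would place $xk^\times$ in $B$), then $xk^\times$ lies in $\op_{i \in I} e_i k^\times$ by choosing $t_i = a_i$. The remark on the invariance of basis cardinality for $\mathbf{K}$-vector spaces (following Mittas) then yields $\dim A_{k^\times} = |B| = n$.

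The only real subtlety lies in the spanning step, where one has to carefully separate the three cases $I = \emptyset$, $|I| = 1$, and $|I| \geq 2$; every other step is a direct translation between the $k$-linear structure of $A$ and the $\mathbf{K}$-linear structure that the Krasner construction induces on $A_{k^\times}$.
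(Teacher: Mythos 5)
Your proof is correct and follows essentially the same route as the paper's: Lemma \ref{x+x=0x} (together with distributivity, which you make explicit and the paper leaves implicit) gives the $\mathbf{K}$-vector space property, and the image of a $k$-basis of $A$ is verified to be a $\mathbf{K}$-basis of $A_{k^\times}$ by translating $k$-linear (in)dependence, exactly as in the paper. The only quibble is that you phrase the distributivity step for $u=xk^\times$ with $x\in A^\times$ rather than for all $x\in A\setminus\{0\}$; the same computation works verbatim for any nonzero $x$, and in any case an integral domain that is finite-dimensional over a subfield is itself a field, so the distinction is vacuous here.
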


\begin{proof}
Let $\beta:=\{b_1,\ldots,b_n\}$ be a basis for $A$ over $k$. By Lemma \ref{x+x=0x} it follows that $A_{k^\times}$ is a $\mathbf{K}$-vector space. We first claim that $B:=\{b_1k^\times,\ldots,b_nk^\times\}$ is a basis for $A_{k^\times}$. Indeed, $0k^\times\notin B$ and
\[
0k^\times\in b_{i_1}k^\times\boxplus\ldots\boxplus b_{i_m}k^\times,
\]
for some $1<m\leq n$ and pairwise distinct $1\leq i_1,\ldots,i_m\leq n$ means that there exists $a_{i_1},\ldots,a_{i_m}\in k^\times$ such that
\[
0= a_{i_1}b_{i_1}+\ldots+ a_{i_m}b_{i_m}.
\]
Nevertheless, the latter only happens if $a_{i_1}=\ldots=a_{i_m}=0$ showing that $B$ is a linearly independent subset of $A_{k^\times}$. Let now $xk^\times$ be a nonzero element of $A_{k^\times}\setminus B$. There exists (uniquely determined) $1\leq m\leq n$, $1\leq i_1,\ldots,i_m\leq n$ (pairwise distinct) and $a_{i_1},\ldots,a_{i_m}\in k^\times$ such that
\[
x=a_{i_1}b_{i_1}+\ldots+ a_{i_m}b_{i_m}.
\]
Since $1\in k^\times$, we obtain that
\[
xk^\times=(a_{i_1}b_{i_1}+\ldots+ a_{i_m}b_{i_m})k^\times\in b_{i_1}k^\times\boxplus\ldots\boxplus b_{i_m}k^\times.
\]
Thus, $B$ also spans $A_{k^\times}$. Now it suffices to note that $b_ik^{\times}=b_jk^{\times}$ for $1\leq i,j\leq n$ means that $b_j=ab_i$ for some $a\in k^\times$ and that since $\{b_1,\ldots,b_n\}$ is a basis for $A$ over $k$, we obtain that $b_iK^{\times}=b_jK^{\times}$ holds in $A_{K^\times}$ if and only if $i=j$. Thus, $B$ has cardinality $n$.
\end{proof}

\subsection{The hyperfield of fractions}

Let $R$ be an integral hyperdomain. Define an equivalence relation $\mathfrak{f}$ on $R\times (R\setminus\{0\})$ as
\[
\mathfrak{f}((x,x'),(y,y'))\iff xy'=yx'.
\]
Let $\Fr(R)$ be the corresponding quotient set and denote by $\frac{x}{x'}\in \Fr(R)$ the equivalence class of $(x,x')$ for $x\in R$ and $x'\in R\setminus\{0\}$. Define
\[
\frac{x}{x'}\boxplus \frac{y}{y'}:=\left\{\frac{z}{x'y'}\bigg\vert z\in xy'\boxplus_R yx' \right\}
\]
and
\[
\frac{x}{x'}\cdot \frac{y}{y'}:=\frac{xy}{x'y'}.
\]
for all $\frac{x}{x'},\frac{y}{y'}\in Fr(R)$.

An embedding $\sigma:H\to H'$ of canonical hypergroups is an injective map that satisfies $\sigma(0)=0'$ and
\[
\sigma(x\boxplus y)=(\sigma(x)\boxplus'\sigma(y))\cap\sigma(H).
\]
By an embedding of hyperrings $R\to R'$ we mean an embedding of the additive hypergroups which also preserves the multiplicative structure.
\begin{lemma}\label{fraemb}
For all integral hyperdomains $R$ with unity, $\Fr(R)$ is a hyperfield and $R$ embeds in $\Fr(R)$ via $x\mapsto\frac{x}{1}$.
\end{lemma}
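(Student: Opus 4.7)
The construction mirrors the usual field of fractions of an integral domain, so the plan is to mimic that argument with the necessary adjustments for hyperaddition. I would proceed in the following order.

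First I would check that $\mathfrak{f}$ is an equivalence relation. Reflexivity and symmetry are immediate from commutativity of $\cdot$; transitivity uses the assumption that $R$ has no zero-divisors: if $xy' = yx'$ and $yz' = zy'$ with $y' \neq 0$, then multiplying the first by $z'$ and the second by $x'$ and subtracting (formally, using $y \in x'z \cdot y'^{-1}$-type manipulations but simply in $R$) yields $y'(xz' - zx') = 0$, hence $xz' = zx'$. Next I would verify that both operations descend to $\Fr(R)$. The multiplicative case is immediate. For $\boxplus$, suppose $(x_1,x_1')\,\mathfrak{f}\,(x_2,x_2')$ and $(y_1,y_1')\,\mathfrak{f}\,(y_2,y_2')$, and let $z_1 \in x_1 y_1' \boxplus_R y_1 x_1'$. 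Multiplying through by $x_2' y_2'$ and using the relations $x_1 x_2' = x_2 x_1'$, $y_1 y_2' = y_2 y_1'$, we obtain
\[
z_1 x_2' y_2' \in (x_2 y_2' \boxplus_R y_2 x_2')\, x_1' y_1',
\]
so there exists $z_2 \in x_2 y_2' \boxplus_R y_2 x_2'$ with $z_1 x_2' y_2' = z_2 x_1' y_1'$, i.e.\ $\tfrac{z_1}{x_1'y_1'} = \tfrac{z_2}{x_2'y_2'}$ in $\Fr(R)$. This gives both inclusions and hence well-definedness.

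I would then verify the canonical hypergroup axioms for $\big(\Fr(R), \boxplus, \tfrac{0}{1}\big)$. Commutativity (CH2) is immediate. Associativity (CH1) reduces, after clearing denominators, to associativity of $\boxplus_R$: both $\bigl(\tfrac{x}{x'}\boxplus\tfrac{y}{y'}\bigr)\boxplus\tfrac{z}{z'}$ and $\tfrac{x}{x'}\boxplus\bigl(\tfrac{y}{y'}\boxplus\tfrac{z}{z'}\bigr)$ are readily seen to equal $\bigl\{\tfrac{w}{x'y'z'} \mid w \in xy'z' \boxplus_R yx'z' \boxplus_R zx'y'\bigr\}$. For (CH3), I would show that $\tfrac{x}{x'}$ admits the unique inverse $\tfrac{x^-}{x'}$: existence follows from $0 \in x \boxplus_R x^-$, and uniqueness follows from lifting any witness $\tfrac{y}{y'}$ of $\tfrac{0}{1} \in \tfrac{x}{x'}\boxplus\tfrac{y}{y'}$ back to $R$ (so $0 \in xy' \boxplus_R yx'$, hence $yx' = (xy')^- = x^- y'$ by uniqueness in $R$, forcing $\tfrac{y}{y'} = \tfrac{x^-}{x'}$). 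Reversibility (CH4) is similarly deduced by clearing denominators and using reversibility in $R$.

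The multiplicative structure on $\Fr(R) \setminus \{\tfrac{0}{1}\}$ is an abelian group with identity $\tfrac{1}{1}$ and inverses $\bigl(\tfrac{x}{x'}\bigr)^{-1} = \tfrac{x'}{x}$ (well defined since $x \neq 0$ whenever $\tfrac{x}{x'} \neq \tfrac{0}{1}$, which in turn needs that $R$ is an integral hyperdomain). Distributivity is a direct check from the definition: both sides of $\tfrac{a}{a'}\bigl(\tfrac{x}{x'}\boxplus\tfrac{y}{y'}\bigr) = \tfrac{a}{a'}\tfrac{x}{x'}\boxplus\tfrac{a}{a'}\tfrac{y}{y'}$ equal $\bigl\{\tfrac{aw}{a'x'y'} \mid w \in xy' \boxplus_R yx'\bigr\}$, using distributivity in $R$. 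This establishes that $\Fr(R)$ is a hyperfield.

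Finally, for the embedding $\iota(x) := \tfrac{x}{1}$: injectivity is immediate from the definition of $\mathfrak{f}$ ($\tfrac{x}{1} = \tfrac{y}{1}$ iff $x = y$), and $\iota$ preserves $0$, $1$ and products. For the hyperadditive structure, by definition
\[
\iota(x) \boxplus \iota(y) \;=\; \tfrac{x}{1}\boxplus\tfrac{y}{1} \;=\; \Bigl\{\tfrac{z}{1} \,\Big|\, z \in x \boxplus_R y\Bigr\} \;=\; \iota(x \boxplus_R y),
\]
which lies in $\iota(R)$, so intersecting with $\iota(R)$ is redundant and the embedding condition holds. The main technical obstacle is the well-definedness of $\boxplus$ sketched above; the rest is a clean translation of the classical field-of-fractions argument through the hyperoperational definitions.
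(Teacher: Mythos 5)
Your proof is correct and, where it overlaps with the paper's, takes the same approach: the paper's proof consists solely of the associativity computation (clearing denominators and reducing to associativity and distributivity in $R$), which matches your treatment, and it declares all remaining verifications trivial. Your sketch is in fact more complete — notably you address well-definedness of $\boxplus$ on equivalence classes, which the paper omits — though in the transitivity step the expression $y'(xz'-zx')=0$ should be replaced by the cancellation argument via $0\in (xz'\boxplus (zx')^-)\,y'$ and axiom (CH3), since subtraction is multivalued in a hyperring.
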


\begin{proof}
The only nontrivial claim is the associativity of the hyperoperation $\boxplus$ of $\Fr(R)$ as defined above. For let $\frac{x}{x'},\frac{y}{y'},\frac{z}{z'}$ be elements of $\Fr(R)$, we have that
\begin{align*}
\left(\frac{x}{x'}\boxplus\frac{y}{y'}\right)\boxplus\frac{z}{z'}&=\bigcup_{t\in xy'\boxplus_R yx'}\frac{t}{x'y'}\boxplus\frac{z}{z'}\\
&=\bigcup_{t\in xy'\boxplus_R yx'}\left\{\frac{s}{x'y'z'}\bigg\vert s\in tz'\boxplus_R zx'y'\right\}\\
&= \left\{\frac{s}{x'y'z'}\bigg\vert s\in \bigcup_{t\in xy'\boxplus_R yx'}tz'\boxplus_R zx'y'\right\}\\
&=\left\{\frac{s}{x'y'z'}\bigg\vert s\in \bigcup_{p\in yz'\boxplus_R zy'}xy'z'\boxplus_R px'\right\}\\
&=\bigcup_{p\in yz'\boxplus_R zy'}\left\{\frac{s}{x'y'z'}\bigg\vert s\in xy'z'\boxplus_R px'\right\}\\
&=\bigcup_{p\in yz'\boxplus_R zy'}\frac{x}{x'}\boxplus\frac{p}{y'z'}\\
&=\frac{x}{x'}\boxplus\left(\frac{y}{y'}\boxplus\frac{z}{z'}\right).
\end{align*}
Where we used associativity and distributivity in $R$ to obtain that:
\begin{align*}
\bigcup_{t\in xy'\boxplus_R yx'}tz'\boxplus_R zx'y'&=(xy'\boxplus_R yx')z'\boxplus_R zx'y'\\
&=xy'z'\boxplus_R (yx'z'\boxplus_R zx'y')\\
&=\bigcup_{p\in yz'\boxplus_R zy'}xy'z'\boxplus_R px'.
\end{align*}
\end{proof}

In the next proposition, we show that taking the (hyper)field of fractions commutes with Krasner's factor construction.

\begin{proposition}\label{Krafrac}
Let $A$ be an integral domain with $1$, $T$ a subgroup of $(A,\cdot,1)$ and $K$ the field of fractions of $A$. Then $\Fr(A_T)$ is isomorphic to $K_T$.
\end{proposition}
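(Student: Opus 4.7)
The plan is to construct an explicit isomorphism $\phi:\Fr(A_T)\to K_T$ by setting
\[
\phi\!\left(\frac{xT}{yT}\right):=\frac{x}{y}T \quad\text{for }x\in A,\ y\in A\setminus\{0\},
\]
and to verify it is a well-defined bijection preserving the multiplicative and hyperadditive structure. Since $A\subseteq K$, the group $T$ is naturally a subgroup of $(K,\cdot,1)$, so $K_T$ is indeed defined; moreover, by Lemma \ref{fraemb} together with Example \ref{Kraquot}, both hyperfields $\Fr(A_T)$ and $K_T$ exist.

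First I would check that $\phi$ is well-defined at two levels. On the one hand, replacing $xT$ by $xtT=xT$ and $yT$ by $ysT=yT$ with $t,s\in T$ changes $\frac{x}{y}$ into $\frac{xt}{ys}=\frac{x}{y}\cdot\frac{t}{s}$, and $\frac{t}{s}\in T$ since $T$ is a group, so its $T$-coset in $K$ is unchanged. On the other hand, if $\frac{xT}{yT}=\frac{x'T}{y'T}$ in $\Fr(A_T)$, then $xy'T=x'yT$, so $xy'=x'y t$ for some $t\in T$, yielding $\frac{x}{y}=\frac{x'}{y'}t$ and thus $\frac{x}{y}T=\frac{x'}{y'}T$. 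The very same chain of equivalences, read in reverse, gives injectivity of $\phi$. Surjectivity is immediate: every element of $K_T$ has a representative $\frac{x}{y}$ with $x,y\in A$ and $y\neq 0$, which is the image of $\frac{xT}{yT}$.

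Next I would verify compatibility with the algebraic operations. Neutral elements and the multiplicative structure are immediate from the definitions: $\phi(\frac{0T}{1T})=0T$, $\phi(\frac{1T}{1T})=1T$, and
\[
\phi\!\left(\frac{xT}{yT}\cdot\frac{x'T}{y'T}\right)=\phi\!\left(\frac{xx'T}{yy'T}\right)=\frac{xx'}{yy'}T=\frac{x}{y}T\cdot\frac{x'}{y'}T.
\]
The main (only mildly delicate) step is the hyperaddition. Unwinding the definition of $\boxplus$ in $\Fr(A_T)$ and then in $A_T$, one gets
\[
\frac{xT}{yT}\boxplus\frac{x'T}{y'T}=\left\{\frac{zT}{yy'T}\;\bigg|\;zT\in xy'T\boxplus_{A_T} x'yT\right\},
\]
and $zT\in xy'T\boxplus_{A_T} x'yT$ means precisely that $z=xy's+x'yt$ for some $s,t\in T$ (using that $T$ is a group to absorb the auxiliary unit from $zT=wT$ into $s$ and $t$). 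Applying $\phi$ then produces
\[
\left\{\tfrac{xy's+x'yt}{yy'}T\;\bigg|\;s,t\in T\right\}=\left\{\left(\tfrac{x}{y}s+\tfrac{x'}{y'}t\right)T\;\bigg|\;s,t\in T\right\},
\]
which is exactly $\frac{x}{y}T\boxplus_{K_T}\frac{x'}{y'}T$ by the defining formula of Example \ref{Kraquot}.

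The main obstacle I anticipate is purely bookkeeping: one must track the three different quotients (by $T$ inside $A$, by $T$ inside $K$, and by the fraction relation $\mathfrak{f}$) and use the group property of $T$ at the right moments to collapse products of $T$-elements. Once this is done, $\phi$ is a bijection preserving $0$, $1$, $\cdot$ and $\boxplus$, hence an isomorphism of hyperfields (the compatibility with the inversion $(\cdot)^-$ is then automatic from axiom (CH3) in Definition \ref{hypergp}).
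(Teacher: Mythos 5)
Your proof is correct and follows essentially the same route as the paper: both construct the explicit map $\frac{xT}{x'T}\mapsto\frac{x}{x'}T$, check well-definedness, bijectivity and multiplicativity, and then verify compatibility of the hyperadditions by unwinding the two defining formulas to the common condition $z=xy't+x'ys$ with $t,s\in T$. Your write-up is somewhat more detailed on the two levels of well-definedness and on absorbing the auxiliary unit of $T$, but there is no substantive difference.
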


\begin{proof}
Since $T$ is a subgroup of $(A,\cdot,1)$ we have that $T=\{\frac{t}{t'}\mid t\in T\}$. It follows that
\begin{align*}
    \sigma:\Fr(A_T)&\to K_T\\
    \frac{xT}{x'T}\mapsto \frac{x}{x'}T
\end{align*}
is well-defined and injective. It is also clearly a surjective map and an isomorphism of the underlying multiplicative groups. It remains to check its behavior on the hyperadditions. On the one hand,
\[
\frac{zT}{z'T}\in\frac{xT}{x'T}\boxplus \frac{yT}{y'T}
\]
holds if and only if $z'T=x'y'T$ and
\begin{equation}\label{111}
zT\in y'xT\boxplus x'yT\iff \exists t,s\in T: z=y'xt+x'ys.
\end{equation}
On the other hand,
\[
\frac{z}{z'}T\in \frac{x}{x'}T\boxplus \frac{y}{y'}T
\]
holds if and only if 
\begin{equation}\label{222}
\exists t,s\in T: \frac{z}{z'}=\frac{x}{x'}t+ \frac{y}{y'}s=\frac{xt}{x'}+ \frac{ys}{y'}=\frac{y'xt+x'ys}{x'y'}.
\end{equation}
By comparing \eqref{111} and \eqref{222} we deduce the desired conclusion.
\end{proof}

\section{Projective spaces, polynomials and rational functions}
\label{Sec3}

In this section we provide an explicit description of the $\mathbf{K}$-vector spaces associated to the projective geometries realised as $\mathbb{P}_F^n$ for some $n\in\N$ and $\mathbb{P}_F^\omega$, for any field $F\neq\mathbb{F}_2$. At this point, our description will be purely algebraic and we will not consider any topology on the projective spaces.

Let us begin with the infinite-dimensional case.

\begin{proposition}\label{infinitepr}
Take a field $F\neq\mathbb{F}_2$ and let $(\mathcal{P}_F,\mathcal{L}_F)$ be the projective geometry realised as $\mathbb{P}^\omega_F$. Then the canonical hypergroup $H(\mathcal{P}_F,\mathcal{L}_F)$ is isomorphic to the additive hypergroup of the factor hyperring $F[X]_{F^\times}$.
\end{proposition}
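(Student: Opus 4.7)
The plan is to exhibit the obvious map coming from the coefficient bijection. Identifying $F[X]$ as an $F$-vector space with $F^\omega$ (via the basis $\{X^i\}_{i\in\N}$), associate to each nonzero $p=\sum_i a_iX^i\in F[X]$ its coefficient sequence $\bar a_p:=(a_i)_{i\in\N}\in F^\omega\setminus\{\bar 0\}$, and set
\[
\Phi\colon F[X]_{F^\times}\longrightarrow H(\P_F,\mathcal L_F),\qquad 0\cdot F^\times\mapsto 0,\qquad pF^\times\mapsto[\bar a_p]\ (p\neq 0).
\]
Well-definedness and bijectivity are immediate: one has $pF^\times=qF^\times\iff q=cp$ for some $c\in F^\times\iff\bar a_q=c\bar a_p\iff[\bar a_p]=[\bar a_q]$, and every point of $\P_F$ is represented by the coefficient sequence of some nonzero polynomial.

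The substance lies in checking that $\Phi$ intertwines the two hyperadditions. Fix $p,q\in F[X]\setminus\{0\}$. In the coincident case $pF^\times=qF^\times$, say $q=cp$ with $c\in F^\times$, one computes
\[
pF^\times\boxplus qF^\times=\{(s+tc)pF^\times : s,t\in F^\times\}=\{0\cdot F^\times,\,pF^\times\},
\]
using $F\neq\mathbb F_2$ to solve $s+tc=0$ in $F^\times$; by Proposition \ref{CC} this matches $[\bar a_p]\boxplus[\bar a_p]=\{0,[\bar a_p]\}$. In the distinct case $pF^\times\neq qF^\times$, the sequences $\bar a_p,\bar a_q$ are $F$-linearly independent, so no combination $\alpha\bar a_p+\beta\bar a_q$ with $\alpha,\beta\in F^\times$ is a nonzero scalar multiple of $\bar a_p$ alone or of $\bar a_q$ alone; hence by \eqref{lin} and Proposition \ref{CC},
\[
[\bar a_p]\boxplus[\bar a_q]=\bigl\{[\alpha\bar a_p+\beta\bar a_q] : \alpha,\beta\in F^\times\bigr\}.
\]
Dually, linear independence of $p$ and $q$ in $F[X]$ prevents $sp+tq$ (with $s,t\in F^\times$) from lying in $pF^\times\cup qF^\times$, so
\[
pF^\times\boxplus qF^\times=\bigl\{(sp+tq)F^\times : s,t\in F^\times\bigr\}.
\]
The identification $(\alpha,\beta)\leftrightarrow(s,t)$ makes $\Phi$ carry the one set onto the other.

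The only subtle point is the bookkeeping in the distinct case: one must check that neither side can accidentally contain $pF^\times$ or $qF^\times$ (equivalently $[\bar a_p]$ or $[\bar a_q]$), and this is ruled out precisely by the $F$-linear independence of $\bar a_p,\bar a_q$ inherited from $pF^\times\neq qF^\times$. Once this observation is in place, the two hyperoperations become literally the same set under $\Phi$, so $\Phi$ is the desired isomorphism of canonical hypergroups.
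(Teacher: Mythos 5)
Your proposal is correct and follows essentially the same route as the paper: transport the hyperoperation along the canonical $F$-linear identification $F^\omega\simeq F[X]$ and verify that lines minus their endpoints correspond exactly to the Krasner sums, with your treatment of the coincident/distinct cases being somewhat more explicit than the paper's. One tiny quibble: in the coincident case the hypothesis $F\neq\mathbb{F}_2$ is needed to find $s,t\in F^\times$ with $s+tc\neq 0$ (so that $pF^\times$ itself appears in the sum), whereas $0\cdot F^\times$ is always attained by taking $s=-tc$.
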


\begin{proof}
The projective space $\mathbb{P}^\omega_F$ is given by $F^\omega\setminus\{0\}/\mathfrak{p}$. Since (as $F$-vector spaces) we have a canonical isomorphism $f:F^\omega\to F[X]$, by definition of $\mathfrak{p}$ we obtain a bijection: 
\begin{align*}
\mathbb{P}^\omega_F&\to F[X]/F^\times\setminus\{0\}\\
\hat x&\mapsto f(\bar x)F^\times
\end{align*}
which may be extended to a bijection
\[
\sigma:R:=H(\mathcal{P}_F,\mathcal{L}_F)\to F[X]_{F^\times}=:R'. 
\]
We claim that $\sigma$ is an isomorphism of canonical hypergroups, i.e., $\sigma(0)=0'$ and $\sigma(x\boxplus y)=\sigma(x)\boxplus'\sigma(y)$ for all $x,y\in R$. Let $x,y\in R$ be nonzero. Then $z\in x\boxplus y$ if and only if $\hat z\in\ell(\hat x,\hat y)\setminus\{\hat x,\hat y\}$ holds in $\mathbb{P}^\omega_F$ for the corresponding projective points. By definition of $\ell(\hat x,\hat y)$, for any choice of representatives $\bar x,\bar y,\bar z$ of $\hat x,\hat y, \hat z$, respectively, we may find $a,b\in F^\times$ such that $\bar z=a\bar x+b\bar y$, which via the isomorphism $f$, is equivalent to $f(\bar z)=af(\bar x)+bf(\bar y)$. Meaning that $\sigma(z)=f(\bar z)F^\times\in f(\bar x)F^\times\boxplus f(\bar y)F^\times=\sigma(x)\boxplus'\sigma(y)$.
\end{proof}

\begin{remark}
By pulling back the multiplication of $F[X]_{F^\times}$ via the isomorphism of the previous theorem we find a natural monoid structure on infinite (countable) dimensional projective spaces over fields. 
\end{remark}

We deduce the following as a corollary of Propositions \ref{infinitepr}, \ref{Krafrac} and Lemma \ref{fraemb} above.

\begin{corollary}\label{F(X)F}
Take a field $F\neq\mathbb{F}_2$ and let $(\mathcal{P}_F,\mathcal{L}_F)$ be the projective geometry realised as $\mathbb{P}^\omega_F$. The canonical hypergroup $H(\mathcal{P}_F,\mathcal{L}_F)$ embeds into the additive hypergroup of the factor hyperfield of the field of rational functions $F(X)$ modulo $F^\times$.
\end{corollary}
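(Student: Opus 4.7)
The plan is to chain together three results that have already been established in the excerpt: the identification of $H(\mathcal{P}_F,\mathcal{L}_F)$ with a Krasner factor hyperring, the embedding of any integral hyperdomain into its hyperfield of fractions, and the commutation of the factor construction with the field-of-fractions construction.

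First, I would invoke Proposition \ref{infinitepr} to replace $H(\mathcal{P}_F,\mathcal{L}_F)$ by its concrete model $F[X]_{F^\times}$. Because $F[X]$ is an integral domain with $1$ and $F^\times$ is a subgroup of $(F[X],\cdot,1)$, Example \ref{Kraquot} guarantees that $F[X]_{F^\times}$ is an integral hyperdomain with unity (the isomorphism of Proposition \ref{infinitepr} transports the multiplicative monoid structure from $F[X]_{F^\times}$ to $H(\mathcal{P}_F,\mathcal{L}_F)$, as already noted in the remark following that proposition).

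Next, I would apply Lemma \ref{fraemb} to $R=F[X]_{F^\times}$ to obtain an embedding of canonical hypergroups
\[
F[X]_{F^\times}\;\hookrightarrow\;\Fr\bigl(F[X]_{F^\times}\bigr),\qquad xF^\times\longmapsto \frac{xF^\times}{1F^\times}.
\]
Finally, since $F(X)$ is by definition the field of fractions of $F[X]$, Proposition \ref{Krafrac} (with $A=F[X]$, $T=F^\times$, $K=F(X)$) yields an isomorphism of hyperfields $\Fr(F[X]_{F^\times})\cong F(X)_{F^\times}$. Composing these three maps produces the desired embedding of $H(\mathcal{P}_F,\mathcal{L}_F)$ into the additive hypergroup of $F(X)_{F^\times}$.

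There is really no substantive obstacle here, since the work has been done in the preceding lemmas and propositions; the only things to check are that the hypotheses of Lemma \ref{fraemb} and Proposition \ref{Krafrac} are indeed met (which they obviously are) and that the composition of an isomorphism with an embedding with another isomorphism is still an embedding of canonical hypergroups (which follows at once from the definition of embedding given just before Lemma \ref{fraemb}). The only mild subtlety worth a sentence is that Lemma \ref{fraemb} was stated for integral hyperdomains \emph{with unity}, so I would explicitly point out that the unity of $F[X]_{F^\times}$ is the class $1F^\times$, as provided by Krasner's construction.
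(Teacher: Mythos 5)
Your argument is exactly the paper's: the corollary is stated there as an immediate consequence of Proposition \ref{infinitepr}, Proposition \ref{Krafrac} and Lemma \ref{fraemb}, chained together just as you do. Your proposal is correct and simply makes explicit the routine verifications (unity of $F[X]_{F^\times}$, composition of embeddings with isomorphisms) that the paper leaves implicit.
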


We can now pass to the finite-dimensional case.

\begin{proposition}
Take a field $F\neq\mathbb{F}_2$ and let $(\mathcal{P}^n_F,\mathcal{L}^n_F)$ be the projective geometry realised as $\mathbb{P}^n_F$ for some $n\in\mathbb{N}$. The canonical hypergroup $H(\mathcal{P}^n_F,\mathcal{L}^n_F)$ embeds into the additive hypergroup of the factor hyperfield $F(X)_{F(X^{n+1})^\times}$.
\end{proposition}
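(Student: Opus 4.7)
The plan is to follow the template of Proposition \ref{infinitepr} above, but working with polynomials of degree at most $n$ in place of the full ring $F[X]$. Setting $V:=\{p\in F[X]\mid\deg p\leq n\}$, I identify $F^{n+1}$ with $V$ via $\bar x=(x_0,\ldots,x_n)\leftrightarrow p_{\bar x}:=\sum_{k=0}^n x_k X^k$ and define
\[
\sigma\colon H(\mathcal{P}^n_F,\mathcal{L}^n_F)\longrightarrow F(X)_{F(X^{n+1})^\times},\quad \sigma(0):=0,\quad \sigma(\hat x):=p_{\bar x}\cdot F(X^{n+1})^\times.
\]
Since $F^\times\subseteq F(X^{n+1})^\times$, the class on the right does not depend on the choice of the representative $\bar x$, so $\sigma$ is well defined.

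The structural input that makes everything go through is that, writing $Y:=X^{n+1}$, the ring $F[X]$ is a free $F[Y]$-module of rank $n+1$ on the basis $\{1,X,\ldots,X^n\}$; consequently $F(X)$ is a free $F(Y)$-module of the same rank on the same basis. Every element of $F(X)$ thus has a unique expansion $\sum_{k=0}^n c_k(Y)X^k$ with $c_k\in F(Y)$, and for $p\in V$ these coefficients are the original constants in $F$. Injectivity of $\sigma$ then reduces to a coefficient comparison: if $p_{\bar x}\cdot h(Y)=p_{\bar y}\cdot g(Y)$ for some $g,h\in F[Y]$, the uniqueness of the expansion forces $x_k h(Y)=y_k g(Y)$ for every $k$, whence $g/h=y_{k_0}/x_{k_0}\in F^\times$ for any $k_0$ with $x_{k_0}\neq 0$, and $\bar y\in F^\times\bar x$. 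The same observation shows that no non-zero $\hat x$ can be sent to $\sigma(0)=0$.

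For the hyperoperation, the inclusion $\sigma(\hat x\boxplus\hat y)\subseteq\sigma(\hat x)\boxplus'\sigma(\hat y)$ is immediate from $F^\times\subseteq F(Y)^\times$. Conversely, assume $\hat x\neq\hat y$ and that some $p_{\bar w}\cdot F(Y)^\times$ lies in $\bigl(\sigma(\hat x)\boxplus'\sigma(\hat y)\bigr)\cap\sigma(H(\mathcal{P}^n_F,\mathcal{L}^n_F))$. After rescaling the representative we may write $p_{\bar w}=u\,p_{\bar x}+v\,p_{\bar y}$ with $u,v\in F(Y)^\times$, and expanding both sides in the $F(Y)$-basis $\{1,X,\ldots,X^n\}$ yields
\[
w_k=u\,x_k+v\,y_k\in F\qquad (0\leq k\leq n).
\]
A short case distinction based on the supports of $\bar x$ and $\bar y$ (using that $\hat x\neq\hat y$ produces indices $k,k'$ with $x_k y_{k'}\neq x_{k'}y_k$) forces both $u$ and $v$ to lie in $F$, hence in $F^\times$; consequently $\bar w=u\bar x+v\bar y$ with $u,v\in F^\times$ and $\hat w\in\hat x\boxplus\hat y$. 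The possibility $0\in\sigma(\hat x)\boxplus'\sigma(\hat y)$ would amount to $p_{\bar x}/p_{\bar y}\in F(Y)^\times$, which by the injectivity argument above would again force $\hat x=\hat y$, contradiction. The degenerate cases $\hat x=\hat y$ and those involving $0$ are checked directly from the definitions.

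The main obstacle is the last step of the coefficient analysis: one must rule out ``genuine'' rational scalars $u,v\in F(Y)^\times\setminus F^\times$, showing that the degree bound on $p_{\bar x},p_{\bar y},p_{\bar w}$ combined with the non-proportionality $\bar x\not\sim\bar y$ collapses $u$ and $v$ to constants in $F^\times$. Freeness of $F[X]$ over $F[X^{n+1}]$ on the basis $\{1,X,\ldots,X^n\}$ is precisely the algebraic feature that makes this collapse unavoidable.
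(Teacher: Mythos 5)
Your map is exactly the one the paper uses (the paper routes it through $H(\mathcal{P}^n_F,\mathcal{L}^n_F)\hookrightarrow F[X]_{F^\times}\hookrightarrow F(X)_{F^\times}$ and then sends $p(X)F^\times\mapsto p(X)F(X^{n+1})^\times$, which composes to your $\sigma$), so this is essentially the same approach; the paper merely asserts that the last assignment ``gives the required embedding,'' whereas you correctly identify the freeness of $F(X)$ over $F(X^{n+1})$ on the basis $\{1,X,\ldots,X^n\}$ as the reason, and your coefficient comparison (a $2\times 2$ Cramer argument at indices $k,k'$ with $x_ky_{k'}\neq x_{k'}y_k$ forcing $u,v\in F^\times$) is a valid completion of that step. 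The only blemish is the harmless inversion $g/h=y_{k_0}/x_{k_0}$, which should read $x_{k_0}/y_{k_0}$.
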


\begin{proof}
Let $F[X]_n$ be the $F$-vector space of polynomials of degree at most $n$.  From the canonical isomorphism of $F$-vector spaces $F^{n+1}\simeq F[X]_n$, we obtain an embedding of canonical hypergroups:
\[
\sigma:H(\mathcal{P}^n_F,\mathcal{L}^n_F)\to F[X]_{F^\times}\hookrightarrow F(X)_{F^\times}.
\]
Now, for all $x\in H(\mathcal{P}^n_F,\mathcal{L}^n_F)$ we have that $\sigma(x)=p(X)F^\times$ for some polynomial $p(X)\in F[X]$ of degree at most $n$. It follows that the assignment $p(X)F^\times\mapsto p(X)F(X^{n+1})^\times$ gives the required embedding.
\end{proof}

If $A$ is an $F$-algebra of dimension $n\in\N_{>1}$ with $1$ and no nonzero zero-divisors, then $F$ is canonically identified with the subset $1\cdot F$ of $A$. Up to this identification, $A_{F^\times}$ is a $\mathbf{K}$-vector space and if $A'$ is another $F$-algebra of dimension $n$ with $1$ and no nonzero zero-divisors, then the isomorphism $A\simeq A'$ as $F$-vector spaces induces an isomorphism of $\mathbf{K}$-vector spaces $A_{F^\times}\simeq A'_{F^\times}$. This proves:

\begin{corollary}\label{PnF}
Take a field $F\neq\mathbb{F}_2$ and let $(\mathcal{P}^n_F,\mathcal{L}^n_F)$ be the projective geometry realised as $\mathbb{P}^n_F$ for some $n\in\mathbb{N}$. Let $A$ be an $F$-algebra with $1$ and no nonzero zero-divisors, with dimension $n+1$. Then the canonical hypergroup $H(\mathcal{P}_F,\mathcal{L}_F)$ is isomorphic to the additive hypergroup of the factor hyperring of $A_{F^\times}$. 
\end{corollary}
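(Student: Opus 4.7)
The plan is to combine the previous proposition with Lemma \ref{Kdim} and the observation in the paragraph immediately preceding the corollary. The key insight is that the canonical hypergroup structure on a factor $V_{F^\times}$ depends only on the $F$-vector space structure of $V$, not on any further multiplicative structure that $V$ might carry.

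First, I would unpack what the previous proposition really produces. Its proof exploits the $F$-vector space isomorphism $F^{n+1}\simeq F[X]_n$ to realize $H(\mathcal{P}^n_F,\mathcal{L}^n_F)$ as the canonical hypergroup obtained from $F[X]_n$ by factoring out the scaling action of $F^\times$. Writing this factor hypergroup as $(F^{n+1})_{F^\times}$, one sees that the image of the embedding of the previous proposition is exactly this object, so that $H(\mathcal{P}^n_F,\mathcal{L}^n_F)\simeq (F^{n+1})_{F^\times}$ as canonical hypergroups.

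Second, I would formalize the functoriality of the factor construction for $F$-vector spaces. For any $F$-vector space $V$, the quotient $V\setminus\{0\}/F^\times$ together with a new zero element carries a canonical hypergroup with hypersum $xF^\times\boxplus yF^\times=\{zF^\times\mid \exists s,t\in F^\times,\ z=sx+ty\}$, a definition that only involves the vector space operations. Consequently, any $F$-linear isomorphism $\varphi\colon V\to W$ descends to a bijection $\bar\varphi\colon V_{F^\times}\to W_{F^\times}$ preserving the hypersum — a routine check.

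Third, given $A$ as in the hypothesis, I identify $F$ with $1\cdot F\subseteq A$. Lemma \ref{x+x=0x} then guarantees that $A_{F^\times}$ is a $\mathbf{K}$-vector space, and Lemma \ref{Kdim} confirms that its dimension is $n+1$; the $F$-algebra structure and the absence of nonzero zero-divisors enter only to let Example \ref{Kraquot} promote $A_{F^\times}$ to an honest hyperring. Choosing any $F$-linear isomorphism $F^{n+1}\simeq A$ and composing the induced canonical hypergroup isomorphism $(F^{n+1})_{F^\times}\simeq A_{F^\times}$ with the isomorphism from the first step yields the desired $H(\mathcal{P}^n_F,\mathcal{L}^n_F)\simeq A_{F^\times}$.

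The only point that requires real care — and which I expect to be the main obstacle — is the functoriality claim in the second step, namely the verification that an $F$-linear isomorphism descends to a bijection of the factor hypergroups that respects the hypersum. Everything else is a chain of canonical identifications that the preceding paragraph has essentially set up, which is why the proof is packaged as a corollary.
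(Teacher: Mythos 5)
Your proposal matches the paper's own argument: the paper proves this corollary precisely via the paragraph preceding it, which observes that $F$ is identified with $1\cdot F\subseteq A$, that $A_{F^\times}$ is then a $\mathbf{K}$-vector space, and that any $F$-vector space isomorphism $A\simeq A'$ (here $A'\simeq F^{n+1}\simeq F[X]_n$, whose factor realises $H(\mathcal{P}^n_F,\mathcal{L}^n_F)$ by the preceding proposition) induces an isomorphism of the factor hypergroups. Your explicit verification of the functoriality step and your remark that the algebra structure is only needed to promote the factor to a hyperring are exactly the content the paper leaves implicit, so the proof is correct and essentially identical.
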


\begin{corollary}\label{finprstr}
Take a field $F\neq\mathbb{F}_2$ and let $(\mathcal{P}^n_F,\mathcal{L}^n_F)$ be the projective geometry realised as $\mathbb{P}^n_F$ for some $n\in\mathbb{N}$. If $K$ is a field extension of $F$ of degree $n+1$, then the canonical hypergroup $H(\mathcal{P}^n_F,\mathcal{L}^n_F)$ is isomorphic to the additive hypergroup of the factor hyperfield $K_{F^\times}$. In particular, If $p$ is an irreducible polynomial of degree $n+1$ over $F$, then the canonical hypergroup $H(\mathcal{P}^n_F,\mathcal{L}^n_F)$ is isomorphic to the additive hypergroup of the factor hyperfield of $F[X]/(p)$ modulo $F^\times$.
\end{corollary}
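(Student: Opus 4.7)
The statement follows almost immediately from the structural result Corollary \ref{PnF}, so the plan is essentially to verify the hypotheses of that corollary and then upgrade ``hyperring'' to ``hyperfield'' using Lemma \ref{x+x=0x} (together with the preceding lemma characterising when $A_{k^\times}$ is a hyperfield).

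First, I would take $A := K$, where $K$ is the given field extension of $F$ of degree $n+1$. Since $K$ is a field, it is an $F$-algebra with $1$ and no nonzero zero-divisors, and its dimension as an $F$-vector space is exactly $n+1$. Corollary \ref{PnF} therefore applies and yields an isomorphism of canonical hypergroups
\[
H(\mathcal{P}^n_F,\mathcal{L}^n_F)\simeq (K_{F^\times},\boxplus,0F^\times).
\]
To upgrade the target to a hyperfield, I would invoke the lemma stating that $A_T$ is a hyperfield (rather than merely an integral hyperdomain) if and only if $A$ is a field: since $K$ is a field, this applies with $T=F^\times$, so $K_{F^\times}$ is indeed a hyperfield and the isomorphism above is in particular an isomorphism of the underlying additive hypergroups of hyperfields.

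For the ``In particular'' clause, let $p\in F[X]$ be irreducible of degree $n+1$. Then $K := F[X]/(p)$ is a field extension of $F$ of degree exactly $n+1$ (standard fact about quotients by irreducible polynomials), and $F$ embeds canonically into $K$ via the constant polynomials, identifying $F^\times$ with a subgroup of $K^\times$. Applying the first part of the corollary to this particular $K$ gives the desired isomorphism
\[
H(\mathcal{P}^n_F,\mathcal{L}^n_F)\simeq (F[X]/(p))_{F^\times}.
\]

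There is essentially no obstacle: the only point worth double-checking is that the multiplicative group $F^\times$ sitting inside $K^\times$ is really the ``same'' $F^\times$ used to form the factor hyperfield in Corollary \ref{PnF}, which is immediate from the canonical identification of $F$ with $1\cdot F\subseteq K$ already made in the paragraph preceding Corollary \ref{PnF}. So the corollary is a direct specialisation of the previous one to the case where the $F$-algebra $A$ happens to be a field, and no new computation is required.
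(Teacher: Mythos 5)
Your proposal is correct and matches the paper's (implicit) argument exactly: the paper states this as an immediate specialisation of Corollary \ref{PnF} to the case where the $F$-algebra $A$ is a field extension $K$ of degree $n+1$, with the hyperfield (rather than merely hyperring) structure of $K_{F^\times}$ guaranteed by Krasner's construction in Example \ref{Kraquot}, and the ``in particular'' clause obtained by taking $K=F[X]/(p)$. No gaps.
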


\begin{corollary}
Let $F\neq\mathbb{F}_2$ be a field and $(\mathcal{P},\mathcal{L})$ be the projective geometry realised as $\mathbb{P}^n_F$ for some $n\in\N$. The dimension of $H(\mathcal{P},\mathcal{L})$ as a $\mathbf{K}$-vector space is $n+1$.
\end{corollary}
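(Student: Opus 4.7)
My approach combines Corollary \ref{PnF} (equivalently, Corollary \ref{finprstr}) with Lemma \ref{Kdim}. The case $n=0$ is trivial, since $\mathcal{P}^0_F=\{\hat p\}$ gives $H(\mathcal{P}^0_F,\mathcal{L}^0_F)=\{0,\hat p\}$, which is isomorphic to $\mathbf{K}$ itself, a one-dimensional $\mathbf{K}$-vector space spanned by $\{\hat p\}$. For $n\geq 1$, I would pick any $F$-algebra $A$ with $1$, no nonzero zero-divisors and $\dim_F A=n+1$ (a convenient choice is $A=F[X]/(p)$ for an irreducible polynomial $p\in F[X]$ of degree $n+1$, which exists whenever $F$ is not algebraically closed). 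Corollary \ref{PnF} then identifies $H(\mathcal{P}^n_F,\mathcal{L}^n_F)$ with the additive hypergroup of $A_{F^\times}$, and Lemma \ref{Kdim} applied to this $A$ with $k=F$ (so that the relevant dimension is $n+1>1$) immediately yields that $A_{F^\times}$ is a $\mathbf{K}$-vector space of dimension $n+1$.

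This argument works for every local field except $\mathbb{C}$, for which no proper algebraic extension exists. To cover that remaining case, or to obtain a uniform proof for any $F\neq\mathbb{F}_2$, one argues directly. Identify $H(\mathcal{P}^n_F,\mathcal{L}^n_F)$ with $F^{n+1}/F^\times$, sending the adjoined $0$ to the class $0\cdot F^\times$; under this identification the hyperoperation on projective points is literally Krasner's factor construction on the $F$-vector space $F^{n+1}$. Let $B=\{\hat e_0,\ldots,\hat e_n\}$, where $e_0,\ldots,e_n$ is the standard basis of $F^{n+1}$; since $|B|=n+1$ and bases of a $\mathbf{K}$-vector space have a uniquely determined cardinality (by the Mittas result cited in the paragraph after the definition), it suffices to prove that $B$ is a basis.

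The technical heart of this direct argument is the iterated-hyperaddition formula
\[
v_1F^\times\boxplus\cdots\boxplus v_mF^\times=\{zF^\times\mid z=a_1v_1+\cdots+a_mv_m,\ a_i\in F^\times\}
\]
for $F$-linearly independent $v_1,\ldots,v_m$. I would prove this by a short induction on $m$: the inductive step works because any partial sum $\sum_{i<m}a_iv_i$ with all $a_i\in F^\times$ is nonzero and not a scalar multiple of $v_m$ (by linear independence), so the two-term Krasner rule unfolds without any degeneracy. Once the formula is in hand, linear independence of $B$ is immediate, because for pairwise distinct indices $i_1,\ldots,i_m$ one has $\sum a_j e_{i_j}\neq 0$ for all $a_j\in F^\times$, hence $0\notin \hat e_{i_1}\boxplus\cdots\boxplus\hat e_{i_m}$. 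Spanning is equally easy: any nonzero $\hat v$ has a representative $v=\sum_{i\in I}c_i e_i$ with support $I$ and $c_i\in F^\times$, so either $|I|=1$ and $\hat v\in B$, or $|I|\geq 2$ and $\hat v\in\boxplus_{i\in I}\hat e_i$ by the formula. The only non-routine step is the inductive verification of the iterated formula; everything else is bookkeeping.
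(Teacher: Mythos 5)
Your argument is correct, but it takes a genuinely different route from the paper's, whose entire proof is a one-line application of Lemma \ref{Kdim} to the algebra $A=F[X]/(X^{n+1})$: that quotient is $(n+1)$-dimensional over $F$ for \emph{every} field $F$, so no case distinction on $F$ is needed. The price the paper pays is that $F[X]/(X^{n+1})$ has nilpotents for $n\geq 1$, so it is not an integral domain and Lemma \ref{Kdim} as stated does not literally apply to it (its proof, however, never uses integrality --- only that $A$ is a $k$-vector space with a basis). Your insistence on an algebra with no nonzero zero-divisors, which is what Corollary \ref{PnF} and Lemma \ref{Kdim} formally demand, is exactly what forces you to exclude algebraically closed base fields such as $\mathbb{C}$ and to supply the direct fallback via the iterated formula $v_1F^\times\boxplus\cdots\boxplus v_mF^\times=\{(a_1v_1+\cdots+a_mv_m)F^\times\mid a_i\in F^\times\}$ for $F$-linearly independent $v_i$. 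That argument is sound --- the induction closes because every partial sum with all coefficients in $F^\times$ is nonzero and lies outside the line through the next vector, so the two-term rule from \eqref{lin} never degenerates --- and it has the merit of working uniformly for every $F\neq\mathbb{F}_2$ without invoking Corollary \ref{PnF} at all; in substance it is the proof of Lemma \ref{Kdim} redone with the standard basis of $F^{n+1}$ in place of a ring basis. In short: the paper buys brevity at the cost of a hypothesis mismatch in its choice of $A$; you buy a verification that is watertight for all fields at the cost of length. (Two cosmetic points: $F^{n+1}$ is not a ring, so ``Krasner's factor construction'' applies only in the sense that the coset-sum formula coincides with \eqref{lin}; and the spanning condition for the element $0\in H\setminus B$ is met trivially by $0\in s\boxplus s$, a case both you and the paper leave implicit.)
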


\begin{proof}
By Lemma \ref{Kdim}, the dimension of $H(\mathcal{P},\mathcal{L})$ as a $\mathbf{K}$-vector space is equal to the dimension of $F[X]/( X^{n+1})$ as an $F$-vector space, i.e., $n+1$. 
\end{proof}

\section{Continuous incidence abelian group structures}
\label{Sec4}

The algebraic description of the $\mathbf{K}$-vector spaces associated to projective spaces shows that these also naturally admit a multiplicative structure. In this section, under the assumption that the base field is a local field, we will study the situations in which this multiplicative structure preserves the topological and geometric structure of the corresponding finite-dimensional projective spaces. We will mainly focus on the cases in which the multiplicative structure is that of an abelian group.

\begin{remark}
Since in Corollaries \ref{PnF} and \ref{finprstr} all $F$-vector spaces are finite-dimensional, if $F$ is a local field, then all the isomorphisms as $F$-vector spaces, and thus of the corresponding $\mathbf{K}$-vector spaces, are also homeomorphisms (cf.\ Remark \ref{Conrad}). This means that we can assume all finite-dimensional projective spaces over $F$ to be of the form $\mathbb{P}_F^n$ for some $n\in\N$.
\end{remark}

\begin{definition}
Let $(\mathcal{P},\mathcal{L})$ and $(\mathcal{P}',\mathcal{L}')$ be projective geometries. A pair of bijections 
\[
\alpha:\mathcal{P}\to\mathcal{P}',\quad \beta:\mathcal{L}\to\mathcal{L}'
\]
is called a \emph{collineation} between $(\mathcal{P},\mathcal{L})$ and $(\mathcal{P}',\mathcal{L}')$ if $\ell(\alpha(x),\alpha(y))=\beta(\ell(x,y))$ for all distinct $x,y\in\mathcal{P}$.\par
The set of all collineations between a projective geometry $(\mathcal{P},\mathcal{L})$ and itself is a group with respect to composition. This is called the \emph{collineation group} of $(\mathcal{P},\mathcal{L})$ and we will denote it by $\Col(\mathcal{P},\mathcal{L})$.
\end{definition}
The following is a straightforward observation.
\begin{lemma}\label{azaxay}
A bijection $\alpha:\mathcal{P}\to\mathcal{P}$ is (part of) a collineation if and only if it preserves the incidence relation, i.e., for all distinct $a,b\in\mathcal{P}$
\begin{equation}\label{collalph}
x\in\ell(a,b)\iff\alpha(x)\in\ell(\alpha(a),\alpha(b)).
\end{equation}
\end{lemma}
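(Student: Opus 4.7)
The plan is to prove both directions directly from the definitions, with the reverse direction requiring us to construct the bijection $\beta:\mathcal{L}\to\mathcal{L}$ explicitly.

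For the forward direction, suppose $(\alpha,\beta)$ is a collineation. Then by definition $\beta(\ell(a,b))=\ell(\alpha(a),\alpha(b))$ for every pair of distinct $a,b\in\mathcal{P}$. Given distinct $a,b\in\mathcal{P}$ and any $x\in\mathcal{P}$, if $x\in\ell(a,b)$ then $\alpha(x)\in\beta(\ell(a,b))=\ell(\alpha(a),\alpha(b))$. The converse implication follows in the same way by applying the same argument to the collineation $(\alpha^{-1},\beta^{-1})$, which exists since $\alpha$ and $\beta$ are bijections. So \eqref{collalph} holds.

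For the reverse direction, assume $\alpha$ satisfies \eqref{collalph}. I would define $\beta:\mathcal{L}\to\mathcal{L}$ by setting $\beta(\ell(x,y)):=\ell(\alpha(x),\alpha(y))$ for each pair of distinct $x,y\in\mathcal{P}$. The first thing to verify is that this is well-defined, i.e., independent of the choice of representatives $x,y$ of the line. If $\ell(x,y)=\ell(x',y')$ with $x'\neq y'$, then $x',y'\in\ell(x,y)$, hence by \eqref{collalph} we have $\alpha(x'),\alpha(y')\in\ell(\alpha(x),\alpha(y))$; since $\alpha$ is injective, $\alpha(x')\neq\alpha(y')$, so by (P1) (the uniqueness of the line through two distinct points) we conclude $\ell(\alpha(x'),\alpha(y'))=\ell(\alpha(x),\alpha(y))$.

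It then remains to check that $\beta$ is a bijection of $\mathcal{L}$. Surjectivity: any $\ell(a,b)\in\mathcal{L}$ with $a\neq b$ is the image under $\beta$ of $\ell(\alpha^{-1}(a),\alpha^{-1}(b))$ (the preimages are distinct since $\alpha$ is injective). Injectivity: if $\ell(\alpha(x),\alpha(y))=\ell(\alpha(x'),\alpha(y'))$, then $\alpha(x'),\alpha(y')\in\ell(\alpha(x),\alpha(y))$, and applying the \emph{converse} direction of the biconditional \eqref{collalph} gives $x',y'\in\ell(x,y)$; by (P1) again, $\ell(x,y)=\ell(x',y')$. Since the defining equation $\beta(\ell(x,y))=\ell(\alpha(x),\alpha(y))$ is built into the construction, $(\alpha,\beta)$ is a collineation.

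Neither step is a serious obstacle; the only subtle point is that one needs \emph{both} directions of the biconditional in \eqref{collalph} to establish both well-definedness and injectivity of $\beta$, which is why the statement is phrased as an \textquotedblleft if and only if\textquotedblright\ at the level of $\alpha$ alone.
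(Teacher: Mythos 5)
Your reverse direction is correct and is essentially the paper's argument (you are in fact more careful than the paper, which does not explicitly check that $\beta$ is well defined on lines). The forward direction, however, contains a gap at its only nontrivial step. You write: \emph{if $x\in\ell(a,b)$ then $\alpha(x)\in\beta(\ell(a,b))=\ell(\alpha(a),\alpha(b))$}. The equality is the definition of a collineation, but the membership $\alpha(x)\in\beta(\ell(a,b))$ is exactly what has to be proved and does not follow from the definition as stated. A collineation only requires that $\beta(\ell(x,y))$ \emph{equal the line} $\ell(\alpha(x),\alpha(y))$ for the two points naming the line; it says nothing a priori about where $\alpha$ sends a \emph{third} point of that line. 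By asserting $\alpha(x)\in\beta(\ell(a,b))$ you are implicitly identifying $\beta(\ell)$ with the pointwise image $\{\alpha(z)\mid z\in\ell\}$, which is precisely the content of \eqref{collalph}.

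The repair is short and is what the paper's proof supplies: for $x\in\ell(a,b)$ with $x\neq a$ (the case $x\in\{a,b\}$ being trivial), axiom (P1) gives $\ell(a,b)=\ell(x,a)$, since both are lines through the distinct points $x$ and $a$; hence
\[
\ell(\alpha(a),\alpha(b))=\beta(\ell(a,b))=\beta(\ell(x,a))=\ell(\alpha(x),\alpha(a))\ni\alpha(x).
\]
The paper phrases this as a contradiction, but the substance is the same: one must apply the collineation identity to the pair $(x,a)$ and invoke uniqueness of lines. Your treatment of the converse implication via $(\alpha^{-1},\beta^{-1})$ is fine and matches the paper.
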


\begin{proof}
Assume that $(\alpha,\beta)$ is a collineation. Take $a,b\in\mathcal{P}$ and $x\in\ell(a,b)$. If $\alpha(x)\notin\ell(\alpha(a),\alpha(b))$, then 
\[
\ell(\alpha(x),\alpha(a))\neq \ell(\alpha(a),\alpha(b))=\beta(\ell(a,b))=\beta(\ell(x,a))=\ell(\alpha(x),\alpha(a)),
\]
a contradiction. For the other implication in \eqref{collalph}, we can just replace in the above reasoning $(\alpha,\beta)$ with $(\alpha^{-1},\beta^{-1})$.\par
Now assume that \eqref{collalph} holds. We have to show that $\beta(\ell(a,b)):=\ell(\alpha(a),\alpha(b))$ for distinct $a,b\in\mathcal{P}$, defines a bijection $\beta:\mathcal{L}\to\mathcal{L}'$. Since $\alpha$ is bijective, $\beta$ is surjective. For the injectivity, take $\ell(a,b),\ell(x,y)\in\mathcal{L}$ with $\beta(\ell)=\beta(\ell')$. Then
\[
\alpha(a),\alpha(b)\in \ell(\alpha(a),\alpha(b))=\ell(\alpha(x),\alpha(y))
\]
implies by \eqref{collalph} that $a,b\in\ell(x,y)$ and $\ell(a,b)=\ell(x,y)$ follows.
\end{proof}

\begin{remark}\label{coltriv}
    Note that if $\mathcal{L}$ is a singleton (i.e., the projective geometry consists of a unique line), then any bijection $\mathcal{P}\to\mathcal{P}$ is a collineation.
\end{remark}

\begin{definition}
Assume that two projective geometries $(\mathcal{P},\mathcal{L})$ and $(\mathcal{P}',\mathcal{L}')$ are given endowed with a topology defined on their sets of points. A collineation $(\alpha,\beta)$ between $(\mathcal{P},\mathcal{L})$ and $(\mathcal{P}',\mathcal{L}')$ will be called \emph{continuous} if $\alpha:\mathcal{P}\to\mathcal{P}'$ is continuous with respect to the given topologies.
\end{definition}

We have now come to the definition of one of the main concepts in our investigation.

\begin{definition}
We say that a projective geometry $(\mathcal{P},\mathcal{L})$ realised as a projective space over a local field $F$ \emph{admits a (continuous) incidence abelian group structure} if $\mathcal{P}$ is an abelian group with respect to some operation $\cdot$ such that $(\alpha_a,\beta_a)$ is a (continuous) collineation between $(\mathcal{P},\mathcal{L})$ and itself (with respect to the standard topology of the projective space, induced by the canonical topology on $F$), for all $a\in\mathcal{P}$, where $\alpha_a:\mathcal{P}\to\mathcal{P}$ and $\beta_a:\mathcal{L}\to\mathcal{L}$ are defined, for all $a,x,y\in\mathcal{P}$, with $x\neq y$, as $\alpha_a(x):=a\cdot x$ and $\beta(\ell(x,y)):=\ell(a\cdot x,a\cdot y)$. 
\end{definition}

\begin{lemma}\label{Ellis}
 A projective geometry $(\mathcal{P},\mathcal{L})$ realised as a finite-dimensional projective space over a local field $F$ admits a continuous incidence abelian group structure if and only if the abelian group $(\mathcal{P},\cdot)$ is a topological group with respect to the standard topology on $\mathcal{P}$ and $a\cdot z\in\ell(a\cdot x,a\cdot y)$ for all $a\in\mathcal{P}$, distinct $x,y\in\mathcal{P}$ and $z\in\ell(x,y)$.    
\end{lemma}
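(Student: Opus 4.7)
The plan is to prove the equivalence by combining the unpacked definition of a continuous incidence abelian group structure with Lemma \ref{azaxay} (which characterises collineations via incidence preservation) and Ellis's theorem on separate versus joint continuity of group operations on locally compact Hausdorff spaces. The label of the lemma is a hint that this last ingredient is the key one.

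For the forward direction, suppose $(\mathcal{P},\cdot)$ carries a continuous incidence abelian group structure. By definition, each left translation $\alpha_a$ is a continuous collineation, so from Lemma \ref{azaxay} the incidence condition $a\cdot z\in\ell(a\cdot x,a\cdot y)$ follows immediately for all $a\in\mathcal{P}$ and all $z\in\ell(x,y)$ with $x\neq y$. Continuity of every $\alpha_a$ says exactly that $(\mathcal{P},\cdot)$ is a semitopological abelian group in the sense of (SG). By Fact \ref{Pcomp}, the standard topology on $\mathcal{P}$ is Hausdorff and compact, hence locally compact. Ellis's theorem then upgrades separate continuity to joint continuity of the group operation, and (since we are in an abelian group) the inverse is also automatically continuous, so $(\mathcal{P},\cdot)$ is a topological group.

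For the backward direction, suppose $(\mathcal{P},\cdot)$ is a topological group whose multiplication preserves the incidence relation in the sense of the statement. Then each $\alpha_a$ is a homeomorphism, in particular continuous and bijective (with inverse $\alpha_{a^{-1}}$). The incidence hypothesis gives one half of \eqref{collalph}: if $z\in\ell(x,y)$, then $\alpha_a(z)\in\ell(\alpha_a(x),\alpha_a(y))$. Applying the same hypothesis with $a$ replaced by $a^{-1}$ and points $\alpha_a(x),\alpha_a(y),\alpha_a(z)$ yields the converse implication, so $\alpha_a$ satisfies \eqref{collalph} for every pair of distinct points. Hence by Lemma \ref{azaxay} each $\alpha_a$ extends to a collineation $(\alpha_a,\beta_a)$, and this collineation is continuous because $\alpha_a$ is. Thus $(\mathcal{P},\cdot)$ is a continuous incidence abelian group structure as required. (In the degenerate case when $\mathcal{L}$ is a singleton, Remark \ref{coltriv} makes the collineation condition automatic and the argument simplifies accordingly.)

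The only non-routine step is the invocation of Ellis's theorem in the forward direction; everything else is a direct unpacking of definitions together with Lemma \ref{azaxay} and Fact \ref{Pcomp}. The main care is to verify that the hypotheses of Ellis's theorem apply, namely that the standard topology on the finite-dimensional projective space $\mathcal{P}$ over a local field $F$ is locally compact Hausdorff, which is precisely the content of Fact \ref{Pcomp}.
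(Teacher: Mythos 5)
Your proof is correct and follows essentially the same route as the paper's: Lemma \ref{azaxay} to translate the incidence condition into the collineation property, compactness and Hausdorffness from Fact \ref{Pcomp}, and Ellis's theorem to pass from a semitopological group to a topological group. One small caveat: continuity of inversion does not follow from abelianness alone (the Sorgenfrey line under addition is an abelian group with jointly continuous addition but discontinuous negation); it is instead part of the conclusion of Ellis's theorem for locally compact Hausdorff semitopological groups, which is exactly how the paper obtains it.
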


\begin{proof}
Since projective spaces over local fields are compact and thus in particular locally compact spaces, the result follows from Lemma \ref{azaxay} and the fundamental papers of R.\ Ellis \cite{Ell57i,Ell57}, where it is proved that locally compact semitopological groups are topological.
\end{proof}

\begin{lemma}\label{hyptwwo}
A projective geometry $(\mathcal{P},\mathcal{L})$ admits an incidence abelian group structure if and only if $H(\mathcal{P},\mathcal{L})$ is (isomorphic to) the additive hypergroup of some hyperfield.    
\end{lemma}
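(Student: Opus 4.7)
The plan is a direct translation between the two notions via Proposition \ref{CC}, where the distributive law of a hyperfield will encode exactly the incidence-preserving property of translations.

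For the forward implication I would assume that $(\mathcal{P},\cdot,1)$ is an abelian group with every $(\alpha_a,\beta_a)$ a collineation, and extend $\cdot$ to $H := H(\mathcal{P},\mathcal{L}) = \mathcal{P}\cup\{0\}$ by declaring $0\cdot x = x\cdot 0 = 0$ for all $x\in H$. This makes $(H,\cdot,1)$ a commutative monoid with $H^\times = \mathcal{P}$ already the given abelian group, so only distributivity $a\cdot(x\boxplus y) = (a\cdot x)\boxplus(a\cdot y)$ remains to be verified. The cases in which one of $a,x,y$ equals $0$ reduce to the identity $0\boxplus z = \{z\}$ valid in $H(\mathcal{P},\mathcal{L})$ together with the convention $0\cdot z = 0$; the case $x = y \in \mathcal{P}$ uses $x\boxplus x = \{0,x\}$ on both sides. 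The essential case $a,x,y\in\mathcal{P}$ with $x\neq y$ is precisely the incidence-preserving condition: since $\alpha_a$ is a bijection one has $a\cdot x \neq a\cdot y$, and the collineation identity $\alpha_a(\ell(x,y)) = \ell(a\cdot x,\, a\cdot y)$ translates at once into $a\cdot(\ell(x,y)\setminus\{x,y\}) = \ell(a\cdot x,\, a\cdot y)\setminus\{a\cdot x, a\cdot y\}$, as required.

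For the converse I would suppose $H(\mathcal{P},\mathcal{L})$ is the additive hypergroup of a hyperfield $(H,\boxplus,\cdot,0,1)$. Then $(\mathcal{P},\cdot,1) = (H^\times,\cdot,1)$ is already an abelian group, and for each $a\in\mathcal{P}$ the map $\alpha_a$ is a bijection of $\mathcal{P}$ (with inverse $\alpha_{a^{-1}}$). For distinct $x,y\in\mathcal{P}$ one has $a\cdot x \neq a\cdot y$, and hyperfield distributivity yields $a\cdot(x\boxplus y) = (a\cdot x)\boxplus (a\cdot y)$, which via Proposition \ref{CC} reads $\alpha_a(\ell(x,y)\setminus\{x,y\}) = \ell(a\cdot x,\, a\cdot y)\setminus\{a\cdot x, a\cdot y\}$. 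Since $\alpha_a(x) = a\cdot x$ and $\alpha_a(y) = a\cdot y$ also lie on $\ell(a\cdot x,\,a\cdot y)$, one obtains $\alpha_a(\ell(x,y)) = \ell(a\cdot x,\, a\cdot y)$, and Lemma \ref{azaxay} then concludes that $(\alpha_a,\beta_a)$ is a collineation.

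I expect no serious obstacle here: the argument is bookkeeping once Proposition \ref{CC} is in hand. The only real content is the observation that hyperfield distributivity on $H$ is the very same statement as the incidence-preserving condition on $\mathcal{P}$, with the equivalence mediated only by the conventions governing the extra point $0$ and the idempotency-like identity $x\boxplus x = \{0,x\}$.
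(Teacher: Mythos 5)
Your proof is correct and follows essentially the same route as the paper's: both directions rest on the observation that hyperfield distributivity is, modulo the conventions governing $0$, exactly the statement that each $\alpha_a$ preserves incidence in the sense of Lemma \ref{azaxay}. Your explicit handling of the degenerate cases ($x=y$ and factors equal to $0$) is slightly more detailed than the paper's, which only treats the case $a,x,y,z\in\mathcal{P}$ with $x\neq y$, but the substance is identical.
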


\begin{proof}
If $H(\mathcal{P},\mathcal{L})$ is (up to isomorphism) the additive hypergroup of some hyperfield, then the multiplicative abelian group of the hyperfield is an incidence abelian group structure on $\mathcal{P}$, since $\alpha_a^{-1}=\alpha_{a^{-1}}$ for all $a\in\mathcal{P}$ by distributivity (axiom (HR3) of Definition \ref{Krasnerhyp}).\par
Conversely, after setting $0\cdot x=x\cdot 0=0$ for all $x\in H(\mathcal{P},\mathcal{L})$, where $\cdot$ is an incidence abelian group structure on $\mathcal{P}$, it suffices to verify the distributivity property:
\[
z\in x\boxplus y=\ell(x,y)\setminus\{x,y\}
\]
if and only if
\[
az\in ax\boxplus ay=\ell(ax,ay)\setminus\{ax,ay\}
\]
for all $a,x,y,z\in\mathcal{P}$, with $x\neq y$.
But $z\notin\ell(x,y)\setminus\{x,y\}$ means that $z\notin\ell(x,y)$ or $z\in\{x,y\}$. By Lemma \ref{azaxay} and the bijectivity of $\alpha_a$, this happens if and only if $az\notin\ell(ax,ay)$ or $az\in\{ax,ay\}$ and that is, $az\notin\ell(ax,ay)\setminus\{ax,ay\}$.
\end{proof}

The validity of the following proposition is not immediate to verify but will be useful.

\begin{proposition}\label{tophyp}
Let $H\simeq K_{k^\times}$ be a non-discrete, compact and Hausdorff topological hyperfield obtained as the factor of a field extension $K|k$ of degree $d\in\N_{>1}$. Then $k$ is a local field and $H^\times$ is isomorphic as a topological group to $K^\times/k^\times$ with respect to the quotient topology, where $K\simeq k^d$ is endowed with the product topology.      
\end{proposition}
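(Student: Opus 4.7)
The plan is to extract a local-field topology $\tau_k$ on $k$ from the given compact Hausdorff non-discrete topological group structure on $H^\times$, invoke Weil's classification to identify $k$ as a local field, and then verify that the resulting standard topology on the projective space $K^\times/k^\times$ matches the given topology. The principal obstacle will be showing that the topology transferred from $H^\times$ to $k$ through an affine chart is a field topology, i.e.\ that $+$ and $\cdot$ on $k$ are continuous.

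First I would fix a $k$-basis $\{1,\alpha_1,\ldots,\alpha_{d-1}\}$ of $K$ and consider the affine chart
\[
U:=\{[1+a_1\alpha_1+\cdots+a_{d-1}\alpha_{d-1}]\mid (a_1,\ldots,a_{d-1})\in k^{d-1}\}\subseteq H^\times,
\]
which is in canonical set-theoretic bijection with $k^{d-1}$. The initial step is to check that $U$ is open, i.e.\ that the projective hyperplane $\mathbb{P}(\alpha_1 k+\cdots+\alpha_{d-1}k)$ is closed: for $d=2$ this is immediate since the complement is a singleton and $H^\times$ is Hausdorff, and for $d>2$ one reduces to the case $d=2$ by restricting to suitable $2$-dimensional projective subspaces and exploiting the continuous multiplications by the elements $[\alpha_i]\in H^\times$. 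Pulling back the subspace topology from $U$ through this bijection gives a topology on $k^{d-1}$; the compatibility of this topology with coordinate permutations (coming from collineations induced by basis permutations in $\GL_d(k)$) then forces it to be a product of a common topology $\tau_k$ on $k$. With this, $(k,\tau_k)$ is Hausdorff (inherited from $H^\times$), locally compact (the compact Hausdorff $H^\times$ is regular, so every point of the open set $U$ has a closed, hence compact, neighbourhood inside $U$), and non-discrete (in a non-discrete topological group no point is isolated).

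The technical heart, and the main obstacle, is to verify that $(k,+,\cdot,\tau_k)$ is a topological field. The key observation is that for each $[x]\in H^\times$ the continuous left-translation by $[x]$ on $H^\times$ restricts on $U$ to a Möbius-type transformation of $k^{d-1}$ with coefficients rationally depending on (a lift of) $x$; jointly over $x\in U$ this yields a jointly $\tau_k$-continuous binary operation of the shape $(a,b)\mapsto(a+b)/(1+abq)$ (in the case $d=2$ with $\alpha_1^2=q\in k$, and analogously coordinatewise in the higher-dimensional case). The continuity of projective inversion translates in the chart to the continuity of the linear involution $a\mapsto -a$, so negation on $k$ is $\tau_k$-continuous; combining these continuous operations and exploiting the explicit algebraic identities that recover $+$ and $\cdot$ from the Möbius operation and negation then yields the continuity of addition and multiplication. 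Thus $(k,\tau_k)$ is a non-discrete, locally compact, Hausdorff topological field, and Weil's classification (Theorems~5 and 8 in \cite{Wei74}) identifies $k$ as a local field. For the compatibility claim, Remark~\ref{Conrad} provides the canonical product topology on $K\simeq k^d$; the quotient topology on $K^\times/k^\times$ induced thereby restricts on the chart to the product topology on $k^{d-1}$, which by construction of $\tau_k$ coincides with the topology pulled back from $U\subseteq H^\times$, and since left translations in $H^\times$ are homeomorphisms for both topologies the equality on $U$ propagates to all of $H^\times$, giving the desired isomorphism of topological groups.
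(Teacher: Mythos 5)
The step you yourself single out as the technical heart --- recovering the continuity of $+$ and $\cdot$ on $k$ from the M\"obius-type operation and negation --- does not go through, and this sinks the whole argument. In your chart for $d=2$ the only maps whose continuity you actually possess are the group operation $m(a,b)=(a+b)/(1+abq)$ and the group inverse $a\mapsto -a$ (the topology is only assumed to make the multiplication of $H^\times$ continuous; nothing is assumed about $\boxplus$). But no composition of $m$ and negation equals $a+b$ or $ab$: writing $\psi(a)=(1+a\alpha)/(1-a\alpha)$ with $\alpha^2=q$, one has $\psi(m(a,b))=\psi(a)\psi(b)$ and $\psi(-a)=\psi(a)^{-1}$, so every term you can build from $m$ and negation in the variables $a,b$ is of the form $\psi^{-1}\bigl(\psi(a)^n\psi(b)^m\bigr)$ with $n,m\in\mathbb{Z}$, and none of these rational functions is $a+b$ or $ab$; structurally, the translation $a\mapsto a+c$ is a unipotent element of $\PGL_2(k)$, whereas all the unary maps you can generate lie in the non-split torus $K^\times/k^\times\leq\PGL_2(k)$. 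So the promised ``explicit algebraic identities'' do not exist, and the hypothesis needed to invoke Weil's classification is never verified. Worse, no argument using only the topological group structure of $H^\times$ \emph{can} verify it: already for $K|k=\mathbb{C}|\mathbb{R}$ the abstract group $\mathbb{C}^\times/\mathbb{R}^\times\simeq S^1$ is a divisible group with torsion $\mathbb{Q}/\mathbb{Z}$ and torsion-free rank $2^{\aleph_0}$, hence abstractly isomorphic to $S^1\times\widehat{\mathbb{Q}}$ ($\widehat{\mathbb{Q}}$ the solenoid), which carries a compact, Hausdorff, non-discrete group topology not isomorphic to that of $S^1$. Any viable proof must therefore bring in the hyperaddition (equivalently, the incidence geometry), which your argument never uses; this is also the thinnest point of the paper's own proof, which passes over the continuity of the field operations in a single unargued sentence after extracting a locally compact topology on $k$ from the $\boxplus$-lines $S=x\boxplus y\cup\{x,y\}$ rather than from affine charts.

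Two further steps are unjustified. The collineations induced by permuting the basis $\{1,\alpha_1,\dots,\alpha_{d-1}\}$ are elements of $\PGL_d(k)$ that in general do not lie in the image of $K^\times/k^\times$, so they are not translations of $H^\times$ and you have no grounds for their continuity; and even granting it, a permutation-invariant topology on $k^{d-1}$ need not be a product topology (on $\mathbb{R}^2$, the topology whose open sets are those meeting every horizontal and every vertical line in a relatively open set is permutation-invariant and strictly finer than the product topology). Likewise, for $d>2$ the closedness of the hyperplane at infinity is asserted by restriction to two-dimensional subspaces, but a set whose trace on every plane is closed need not be closed. Your route through charts and M\"obius transformations is genuinely different in its details from the paper's route through $\boxplus$-lines, but as written it leaves the decisive continuity claim unproved.
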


\begin{proof}
Since $k$ is a subfield of $K$ we have that $H$ is a $\mathbf{K}$-vector space by Lemma \ref{x+x=0x}. The projective geometry $\mathcal{P}$ associated to $H$ is finite-dimensional and clearly collinear to the projective geometry realised as $\mathbb{P}_k^{d-1}$. 

By assumption $H^\times$ is a compact and Hausdorff topological abelian group isomorphic (as a group) to the quotient group $K^\times/k^\times$. Let $\varphi:K^\times/k^\times\to H^\times$ be an isomorphism and define $O\subseteq K^\times/k^\times$ to be open if and only if $\varphi(O)$ is open in $H^\times$. In this way we obtain a topology $\tau$ on $K^\times/k^\times=\mathbb{P}_k^{d-1}$ with respect to which we have a compact and Hausdorff topological abelian group homeomorphic via $\varphi$ to $H^\times$. 

For all distinct $x,y\in H^\times$ we have that $0\notin x\boxplus y\cup\{x,y\}=:S$ and hence $S\subseteq H^\times$. As a subset of $\mathbb{P}_k^{d-1}$, the set $S$ is a projective subspace of dimension $1$ (by the converse of Proposition \ref{CC}), i.e., $S\simeq\mathbb{P}_k^1$ (as a projective space) and the subspace topology induced on $S$ from $H^\times$ (via $\varphi$) defines a topology $\tau'$ on $\mathbb{P}_k^1$. Since $H^\times$ is Hausdorff, it follows that $S$ is closed in $H^\times$ and thus $(\mathbb{P}_k^1,\tau')$ is compact and Hausdorff. Therefore, $k$ with the induced subspace topology is locally compact. We obtained a locally compact topology on $k$.

The additive and multiplicative group laws of $k$ are continuous with respect to this topology, because they coincide with the additive and multiplicative group laws of $K$ by assumption.

Since $\mathcal{P}$ and $\mathbb{P}_k^{d-1}$ are projective geometries of the same dimension, we deduce that $\mathcal{P}$ is isomorphic to $\mathbb{P}_k^{d-1}$ as a projective space and thus homeomorphic to it, when the latter is endowed with its standard topology coming from the topology of the local field $k$ (see Remark \ref{Conrad}). This completes the proof.
\end{proof}

\begin{corollary}\label{hyptwo}
A projective geometry $(\mathcal{P},\mathcal{L})$ realised as a projective space of dimension $n\in\N$ over a local field $F$ admits a continuous incidence abelian group structure if and only if $H(\mathcal{P},\mathcal{L})$ is isomorphic (and homeomorphic) to the additive hypergroup of some topological hyperfield which is a factor hyperfield of the form $L_{K^\times}$ for some field extension $L|K$ of degree $n+1$, where $K$ is a local field, $L\simeq K^{n+1}$ (where the isomorphism is of $K$-vector spaces) is endowed with the product topology and $L^\times/K^\times$ with the quotient topology.
\end{corollary}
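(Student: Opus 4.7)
The plan is to prove both directions by assembling Lemma \ref{hyptwwo}, Lemma \ref{Ellis}, Corollary \ref{finprstr} and Proposition \ref{tophyp}. The reverse direction is essentially a direct transport of structure, whereas the forward direction first extracts a hyperfield from the given incidence group via Lemma \ref{hyptwwo} and then realizes its additive hypergroup as a factor hyperfield via Corollary \ref{finprstr}.

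For the $(\Leftarrow)$ direction, suppose that $\varphi : H(\mathcal{P},\mathcal{L}) \to L_{K^\times}$ is an isomorphism of additive hypergroups and a homeomorphism, where $L_{K^\times}$ is a topological hyperfield of the stated form. The natural step is to transport the multiplication of $L_{K^\times}$ to $\mathcal{P}$ by setting $a \cdot x := \varphi^{-1}(\varphi(a)\varphi(x))$. Since $K^\times$ is closed in the topological group $L^\times$, the quotient $L^\times / K^\times$ is a topological group, and the homeomorphism $\varphi$ makes $(\mathcal{P}, \cdot)$ into a topological group under the standard topology. The distributivity axiom (HR3) in $L_{K^\times}$ translates, via $\varphi$, to the assertion that each translation $\alpha_a : x \mapsto a \cdot x$ preserves incidence of points with lines; combined with Lemma \ref{Ellis} this yields a continuous incidence abelian group structure on $\mathcal{P}$.

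For the $(\Rightarrow)$ direction, assume $(\mathcal{P},\mathcal{L})$ admits such a structure $\cdot$. Lemma \ref{hyptwwo} turns $H(\mathcal{P},\mathcal{L})$ into a hyperfield (after extending $\cdot$ by $0 \cdot x := 0$). Fact \ref{Pcomp} together with Lemma \ref{Ellis} makes $\mathcal{P}$ a compact Hausdorff topological group with respect to the standard topology, which is non-discrete because the canonical topology on $F$ is; hence $H(\mathcal{P},\mathcal{L})$ is a non-discrete, compact, Hausdorff topological hyperfield. Setting $K := F$ and choosing a degree $n+1$ field extension $L \mid F$, Corollary \ref{finprstr} supplies an isomorphism of additive hypergroups $H(\mathcal{P},\mathcal{L}) \simeq L_{F^\times}$; since $L \simeq F^{n+1}$ as topological $F$-vector spaces by Remark \ref{Conrad}, this isomorphism is automatically also a homeomorphism, and $L^\times / F^\times$ carries the required quotient topology. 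Proposition \ref{tophyp} (in the trivial direction) then confirms that $K = F$ being local makes $L_{K^\times}$ a topological hyperfield of the required form.

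The main obstacle is the existence of the degree $n+1$ extension of $F$ required in the final step of the forward direction. In the non-Archimedean case such extensions always exist, so nothing more is needed. In the Archimedean case the obstruction occurs precisely when $F = \mathbb{R}$ with $n \geq 2$ or $F = \mathbb{C}$ with $n \geq 1$; however, the Fundamental Theorem of Projective Geometry (for $n \geq 2$) combined with Proposition \ref{CC} shows that any isomorphism of additive hypergroups $H(\mathcal{P},\mathcal{L}) \simeq L_{K^\times}$ forces the projective geometry associated to $L_{K^\times}$, namely $\mathbb{P}_K^n$, to be isomorphic to $\mathbb{P}_F^n$, and hence $K \simeq F$; so the right-hand side fails together with the non-existence of $L$. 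The remaining low-dimensional Archimedean subcase $F = \mathbb{C}$, $n = 1$ is subsumed by the direct analysis of $\mathbb{P}_\mathbb{C}^1$ carried out in Section \ref{Sec5}. In this way the equivalence is preserved vacuously in every case where neither side can hold.
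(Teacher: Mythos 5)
Your reverse direction is fine and fleshes out what the paper leaves implicit (transport of structure from $L^\times/K^\times$ plus Lemma \ref{Ellis}). The forward direction, however, has a genuine gap. The paper does not set $K:=F$: it applies Lemma \ref{hyptwwo} to obtain a hyperfield $H$ whose additive hypergroup is $H(\mathcal{P},\mathcal{L})$, then invokes Theorem 3.8 of \cite{CC11} (using Desarguesianity) to conclude $H\simeq L_{K^\times}$ for \emph{some a priori unknown} field extension $L|K$, and only then uses Proposition \ref{tophyp} to show that this $K$ is a local field carrying the stated topologies. You never invoke the Connes--Consani realization theorem, and this omission is what breaks your argument: choosing $K:=F$ requires a degree $n+1$ extension of $F$, which does not exist for $F=\mathbb{C}$ (any $n\geq 1$) or for $F=\mathbb{R}$ with $n\geq 2$. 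In those cases the implication from left to right survives only if the left-hand side also fails, and you do not prove that it fails. Your appeal to the Fundamental Theorem of Projective Geometry shows only that the \emph{right-hand side} cannot hold, whereas the nonexistence of incidence abelian group structures on $\mathbb{P}_F^n$ for Archimedean $F$ and $n\geq 2$ (Corollary \ref{RCniet}) is itself proved in the paper via Lemma \ref{hyptwwo}, \cite[Theorem 3.8]{CC11} and the FTPG --- precisely the chain you skipped. Asserting that ``the equivalence is preserved vacuously in every case where neither side can hold'' presupposes the conclusion.

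A second, smaller defect: even where a degree $n+1$ extension of $F$ does exist, your forward direction derives the isomorphism $H(\mathcal{P},\mathcal{L})\simeq L_{F^\times}$ from Corollary \ref{finprstr} alone, without ever using the given group operation. This satisfies the literal wording of the right-hand side, but it discards the substantive content the paper extracts, namely that the hyperfield built from the \emph{given} multiplication is itself isomorphic to a factor hyperfield $L_{K^\times}$, where $K$ need not be $F$ (for projective lines over non-Archimedean fields it genuinely is not, in general). That identification is what the paper relies on later (at the end of Section \ref{Sec5} and in Theorem \ref{mainth}) to classify \emph{all} continuous incidence abelian group structures, and it is exactly what \cite[Theorem 3.8]{CC11} combined with Proposition \ref{tophyp} delivers.
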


\begin{proof}
    By \cite[Theorem 3.8]{CC11}, since the projective geometry realised as $\mathbb{P}_F^n$ is Desarguesian, we have that $H\simeq L_{K^\times}$ as $\mathbf{K}$-vector spaces for some proper field extension $L|K$. The result then follows by Proposition \ref{tophyp} above.
\end{proof}

\section{The case of projective lines}
\label{Sec5}

We observe that in dimension $1$, the geometry (i.e., the incidence relation) is somehow trivial and thus the only restrictions on continuous incidence abelian group structures come from the topology.

In the Archimedean case, the only projective line which admits a continuous incidence abelian group structure is the real projective line and this structure is uniquely determined up to isomorphism of topological groups. This follows by the following facts as it is well-known that $\mathbb{P}_\mathbb{R}^1\approx\mathbb{S}^1$.

\begin{fact}[\cite{VN33}, or Theorem 9.57 in \cite{Hofmann20}]\label{VonNeumann}
Compact locally Euclidean topological groups are Lie groups.
\end{fact}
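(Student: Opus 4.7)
The plan is to combine the Peter--Weyl theorem with the principle that a locally Euclidean topological group has no small subgroups (NSS): a small enough Peter--Weyl kernel will then be forced to be trivial, collapsing $G$ onto one of its Lie quotients.

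First I would invoke Peter--Weyl: any compact Hausdorff topological group $G$ is the inverse limit of its compact Lie quotients, $G = \varprojlim_\alpha G/N_\alpha$, where $\{N_\alpha\}$ is the directed family of closed normal subgroups $N\trianglelefteq G$ for which $G/N$ is a Lie group. Because $G$ is compact Hausdorff, $\bigcap_\alpha N_\alpha = \{e\}$, and, by the definition of the inverse limit topology, the $N_\alpha$ form a neighborhood basis of the identity. Next I would establish NSS for $G$: fix a chart $\varphi : U \to \mathbb{R}^n$ at $e$ and, by continuity of the group operations, shrink to a symmetric open $V \subseteq U$ with $V\cdot V \subseteq U$. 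The aim is to show that every subgroup $H \subseteq V$ is trivial. The key point is that for $h \in H$ the closure $\overline{\langle h\rangle}$ is a compact abelian subgroup of $G$ contained in $\overline{U}$, and transporting the absence of nontrivial compact subgroups from $\mathbb{R}^n$ back through the chart (by tracking the iterates $h^k$) forces $h = e$.

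Once NSS is established, choose $\alpha$ with $N_\alpha \subseteq V$. Since $N_\alpha$ is itself a subgroup contained in an NSS neighborhood, one gets $N_\alpha = \{e\}$, and the canonical projection $G \to G/N_\alpha$ is a continuous bijection from a compact space to a Hausdorff space, hence a homeomorphism; this exhibits $G$ as a closed subgroup of a compact Lie group, which is then itself a Lie group by Cartan's closed subgroup theorem. The main obstacle is the NSS step: deducing the absence of small subgroups from the locally Euclidean hypothesis is the analytic heart of Hilbert's fifth problem, and its rigorous execution (von Neumann in the compact case at hand, Gleason--Montgomery--Zippin in full generality) requires delicate control of one-parameter subgroups and their logarithms inside the Euclidean chart.
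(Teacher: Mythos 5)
The paper does not actually prove this statement: it is quoted as a Fact with pointers to von Neumann and to Theorem 9.57 of \cite{Hofmann20}, so there is no internal argument to compare against. The outer layers of your reduction are the standard ones and are sound: Peter--Weyl realises a compact Hausdorff group $G$ as $\varprojlim_\alpha G/N_\alpha$ with the $N_\alpha$ closed, normal, directed downward and of trivial intersection, and it is compactness (rather than ``the definition of the inverse limit topology'') that then forces some $N_\alpha$ into any prescribed neighbourhood of $e$; once $G$ is known to have no small subgroups, such an $N_\alpha$ is trivial and $G=G/N_\alpha$ is already a compact Lie group, with no need for the detour through Cartan's closed subgroup theorem.

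The genuine gap is in the NSS step, and the specific justification you offer would fail rather than merely being incomplete. A chart $\varphi\colon U\to\R^n$ is only a homeomorphism, not a homomorphism, so the fact that $(\R^n,+)$ has no nontrivial compact subgroups cannot be ``transported back through the chart'': the image $\varphi(H)$ carries no additive structure, and tracking the iterates $h^k$ inside a symmetric $V$ with $V\cdot V\subseteq U$ produces no contradiction by itself. Indeed, exactly this configuration occurs in compact non--locally-Euclidean groups such as $\mathbb{Z}_p$, where every neighbourhood of the identity contains nontrivial compact (open) subgroups and all iterates of their elements stay small forever; the manifold hypothesis must therefore enter in an essential way. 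The correct input is Newman's theorem on periodic transformations (in the form: a nontrivial compact group cannot act effectively on a connected topological manifold with all orbits of diameter below a fixed $\varepsilon$ depending on the manifold). Applying it to the free translation action of a putative small subgroup $H\subseteq V$ on the compact manifold $G$, whose orbits are the small cosets $xH$, is what actually forces $H=\{e\}$. You correctly flag this step as the analytic heart of the matter, but as written your argument for it rests on a transport principle that is false, so the proof is not complete without importing Newman's theorem or an equivalent.
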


\begin{fact}[Theorem 6.95 in \cite{Hofmann20}]
\label{thm:695}
The only compact Lie groups on a sphere are $\mathbb{S}^1$ and $\mathbb{S}^3$, with their standard topological group structure.   
\end{fact}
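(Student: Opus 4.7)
The plan is to combine a topological obstruction from the Euler characteristic with the classification of compact connected Lie groups. Let $G$ denote a compact Lie group whose underlying space is homeomorphic to $\mathbb{S}^n$ with $n\geq 1$; we aim to show that $n\in\{1,3\}$ and that the group law on $G$ is, up to isomorphism of topological groups, the standard one on $U(1)\approx\mathbb{S}^1$ or on $SU(2)\approx\mathbb{S}^3$. The first step is to rule out even $n$ via the Lefschetz fixed-point theorem. Since $\mathbb{S}^n$ is connected, so is $G$, and for every $g\in G$ the left translation $L_g:G\to G$ is homotopic to the identity (through translations along any path from $e$ to $g$), so its Lefschetz number equals $\chi(G)$. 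On the other hand, for $g\neq e$ the map $L_g$ has no fixed point, so its Lefschetz number vanishes. Hence $\chi(\mathbb{S}^n)=0$, which forces $n$ to be odd.

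Next I would narrow down the remaining odd dimensions using rational cohomology. For $n\geq 2$ the sphere $\mathbb{S}^n$ is simply connected, so $G$ is a compact connected simply connected Lie group. By Hopf's theorem its rational cohomology is an exterior algebra on odd-degree generators $x_{2d_1-1},\dots,x_{2d_r-1}$; comparing with $H^*(\mathbb{S}^n;\mathbb{Q})=\Lambda(x_n)$ forces $r=1$ and $2d_1-1=n$. In particular $G$ has rank $1$ and is simple, and the classification of compact simply connected simple Lie groups leaves only type $A_1$, namely $SU(2)$, whose underlying manifold is $\mathbb{S}^3$. In the remaining case $n=1$, any compact connected $1$-dimensional Lie group has abelian Lie algebra and is therefore a $1$-torus, hence topologically isomorphic to $U(1)$.

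The uniqueness of the group law is then automatic from this analysis: up to isomorphism of topological groups there is a unique connected compact abelian Lie group of dimension $1$ and a unique simply connected compact simple Lie group of rank $1$. The main obstacle is the cohomological reduction to rank $1$ via Hopf's theorem; an alternative, but comparable, route would be to invoke Adams' theorem that the only parallelisable spheres are $\mathbb{S}^0,\mathbb{S}^1,\mathbb{S}^3,\mathbb{S}^7$ and then rule out $\mathbb{S}^7$ by observing that no compact simple Lie algebra has dimension $7$ (the compact simple dimensions begin $3,8,10,14,\dots$). In either approach the classification input is the conceptually hard step, whereas the Euler characteristic argument that eliminates even $n$ is elementary.
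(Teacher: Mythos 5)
The paper does not prove this statement: it is recorded as a Fact with a citation to Theorem 6.95 of Hofmann--Morris, so there is no internal argument to compare yours against. Your sketch is a correct and standard derivation of the cited result, and it is essentially the classical route: the Lefschetz/Euler-characteristic argument correctly kills even $n$ (a compact connected Lie group of positive dimension has $\chi=0$, while $\chi(\mathbb{S}^n)=1+(-1)^n$); for odd $n\geq 3$ simple connectivity plus Hopf's theorem reduces to a rank-one compact simply connected (hence semisimple, hence simple) group, which must be $SU(2)\approx\mathbb{S}^3$; and $n=1$ gives the circle group. Two small points are worth making explicit. First, the nontrivial input beyond Hopf's exterior-algebra theorem is the identification of the number of exterior generators with the rank of the group (equivalently, you could replace this step by $\pi_3(G)\cong\mathbb{Z}^{m}$ with $m$ the number of simple factors, which vanishes for $\mathbb{S}^n$ with $n>3$). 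Second, in the context where the paper uses this Fact the hypothesis is only that a compact \emph{topological} group is homeomorphic to a sphere; the passage to Lie groups is supplied by the preceding Fact \ref{VonNeumann}, which you are implicitly assuming when you start from a Lie group. Neither point is a gap, and your alternative route via parallelisability and the list of dimensions of compact simple Lie algebras is also valid, though it invokes Adams' much deeper theorem where the elementary cohomological argument suffices.
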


We now pass to the non-Archimedean case, which is more subtle, due to the following observation.

\begin{fact}
Any two non-Archimedean local fields are homeomorphic.   
\end{fact}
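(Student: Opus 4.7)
The plan is to invoke a classical topological characterization theorem: any two topological spaces that are Hausdorff, second-countable, locally compact, totally disconnected, perfect (no isolated points) and non-compact are homeomorphic. This unique-up-to-homeomorphism space is often described as the Cantor set minus a point, or equivalently as $\mathbb{Q}_p$ itself. The strategy is therefore to verify that every non-Archimedean local field enjoys these six properties, and then cite the characterization.

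First I would argue that each of the two families listed in case (NAR$_0$) and (NAR$_p$) satisfies the list. Hausdorff and locally compact hold by the very definition of local field. For second-countability, a uniformizer $\pi$ together with the finite residue field (finiteness of the residue field is recalled in Appendix \ref{app1}) produces a countable basis: the balls $a+\pi^n\mathcal{O}_F$ with $a$ ranging over any fixed set of coset representatives and $n\in\mathbb{Z}$ form a basis of clopen sets. Total disconnectedness follows from the ultrametric inequality, which forces every open ball to be clopen, so the only connected subsets are singletons. For perfection, note that every ball $a+\pi^n\mathcal{O}_F$ contains infinitely many points because the residue field has $q\geq 2$ elements, so no point is isolated; equivalently, if some point were isolated then the topology would be discrete by translation-invariance, contradicting the definition of local field. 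Finally, $F$ itself is non-compact since $\{\pi^{-n}\mathcal{O}_F\}_{n\geq 0}$ is an open cover with no finite subcover.

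Once these six properties are in hand, I would invoke the classical characterization theorem for the ``Cantor set minus a point''. The cleanest route is to factor through Brouwer's theorem, which states that any nonempty compact, metrizable, totally disconnected space without isolated points is homeomorphic to the Cantor set $\mathcal{C}$. Applying this to the ring of integers $\mathcal{O}_F$ (which is compact, metrizable, totally disconnected and perfect) gives $\mathcal{O}_F\approx\mathcal{C}$ for every non-Archimedean local field $F$. Then $F=\bigsqcup_{n\in\mathbb{Z}}\pi^{-n}\mathcal{O}_F^\times$ (with $0$ adjoined as the limit of $\pi^n$) expresses $F$ as a countable disjoint union of clopen copies of the Cantor set, and this countable union structure depends only on the topological data, not on $F$; hence any two non-Archimedean local fields admit the same such decomposition and are mutually homeomorphic.

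The main obstacle I expect is not the verification of the six properties on the $F$ side, which is essentially bookkeeping with the canonical norm, but rather writing the argument in a self-contained way without invoking heavy references to Brouwer-type theorems. If one wishes to avoid citing the characterization theorem, one can instead construct an explicit homeomorphism by choosing uniformizers $\pi,\pi'$ and bijections between systems of coset representatives modulo $\pi^n$ and $(\pi')^n$ (both systems are of size $q^n$ and $(q')^n$ respectively, so one actually has to pass through a refinement or a common countable base of clopen balls indexed by a tree); the homeomorphism is then defined on the corresponding inverse limits. In the write-up I would almost certainly prefer the cleaner route through Brouwer's theorem, flagging that it is the topological content that matters and not the algebraic structure.
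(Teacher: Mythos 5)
Your proposal is correct and follows essentially the same route as the paper: both arguments apply Brouwer's characterization of the Cantor set to the compact, perfect, totally disconnected valuation ring $\mathcal{O}_F$, and then pass from $\mathcal{O}_F\approx\mathcal{C}$ to the whole field, identifying $F$ with the Cantor set minus a point. Your write-up is somewhat more explicit about the last step (via the clopen decomposition $F=\{0\}\cup\bigsqcup_{n\in\mathbb{Z}}\pi^{-n}\mathcal{O}_F^\times$), where the paper simply asserts that the field of fractions of the valuation ring is homeomorphic to the Cantor set without a point.
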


\begin{proof}
   Their valuation rings are homeomorphic to a compact, perfect and totally disconnected subset of $\mathbb{R}$ and hence all homeomorphic to the Cantor set (see e.g.\ \cite[Theorem 2.29]{Kat07}). Any local field, being the field of fractions of its valuation ring, is then homeomorphic to the Cantor set without a point.
\end{proof}

Since the projective line over a topological field $(K,\tau)$ is homeomorphic to the Alexandroff extension of $(K,\tau)$, we deduce from the above proposition that $\mathbb{P}_{F}^1\approx \mathbb{P}_{F'}^1$, for any pair of non-Archimedean local fields $F$ and $F'$.

If $F$ is a non-Archimedean local field, then by the Artin-Schreier Theorem \cite{AS26,AS27} and since $F$ is not algebraically closed nor real closed\footnote{A real closed field must have characteristic $0$, and a finite extension $F$ of $\mathbb{Q}_p$ with degree $d\in\N$ is not real closed because it is not elementarily equivalent to $\mathbb{R}$ as e.g.\ $\exists x:x^{2d}-p$ holds in $\mathbb{R}$ but not in $F$.}, the algebraic closure of $F$ has infinite degree over $F$. In particular, there exists an extension $K$ of $F$ of degree $2$ and the topological group $K^\times/F^\times$ gives a continuous incidence abelian group structure on $\mathbb{P}_F^1$. By the above observations, the latter is also a continuous incidence abelian group structure on $\mathbb{P}_{F'}^1$ for any non-Archimedean local field $F'$. 

Clearly, all extensions of degree $2$ of $F$ are isomorphic (as fields) and thus yield isomorphic continuous incidence abelian group structures. However, if $F$ and $F'$ are non-isomorphic local fields, then the corresponding continuous incidence abelian group structures on $\mathbb{P}_{F}^1\approx\mathbb{P}_{F'}^1$ will not be isomorphic as groups. 

We end up with infinitely many non-isomorphic continuous incidence abelian group structures on $\mathbb{P}_F^1$, one for each other non-Archimedean local field $F'$. Note that by the classification of (non-Archimedean) local fields, these form a countable collection. By Corollary \ref{hyptwo}, there are no other possibilities.

\section{The case of dimension \texorpdfstring{$n>1$}{TEXT}}
\label{Sec6}

In this final section we will prove that over Archimedean local fields, finite-dimensional projective spaces of dimension at least $2$ do not admit incidence abelian group structures and in particular continuous incidence abelian group structures. 

In contrast, we will also provide an explicit construction of continuous incidence abelian group structures for the non-Archimedean case in any (finite) dimension $n>1$. Moreover, we will show that there are at most countably many of these and finitely many if $n+1$ is not divisible by the characteristic of the base non-Archimedean local field.

The key point is that the geometry (i.e., the incidence relation) in dimension greater than $1$ does not trivialise as in the case of projective lines and, what is more, is able to provide enough information to recover the full field structure of the base local field.

For a field $F$, we will denote by $\Aut(F)$ the group (under composition) containing all the automorphisms of $F$. If $(F,\tau)$ is a topological field, then $\Aut_\tau(F)$ will denote the subgroup of $\Aut(F)$ whose elements are continuous with respect to $\tau$ (recall that all the elements of $\Aut_\tau(F)$ are automatically homeomorphisms).

\begin{remark}
Since $[\mathbb{C}:\mathbb{R}]=2$, there are only $2$ automorphisms of $\mathbb{C}$ that fix $\mathbb{R}$. However, as explained e.g.\ in \cite[Example 19.20]{Mor96}, one can deduce (using the axiom of choice) that there are infinitely many $\theta\in\Aut(\mathbb{C})$ such that $\theta(\mathbb{R})\neq\mathbb{R}$ (for more information see e.g.\ \cite{Kest51, Yale66}). Since $\mathbb{Q}$ is fixed by any $\theta\in\Aut(\mathbb{C})$ and $\mathbb{Q}$ is dense in $\mathbb{R}$, it follows all these wild automorphisms of $\mathbb{C}$ are not in $\Aut_\tau(\mathbb{C})$, which contains only the identity and complex conjugation.     
\end{remark}

\begin{definition}
Let $F$ be a field and $V,W$ vector spaces over $F$. A \emph{semilinear map from $V$ to $W$} is a map $f:V\to W$ which is linear \lq\lq up to a (fixed) automorphism of $F$\rq\rq, that is there exists $\theta_f\in\Aut(F)$ such that 
\begin{itemize}[leftmargin=2cm]
    \item[(SL1)] $f(v+v')=f(v)+f(v')$, for all $v,v'\in V$.
    \item[(SL2)] $f(av)=\theta_f(a)f(v)$, for all $v\in V$.
\end{itemize}
\end{definition}

The set $\GammaL(V,F)$ of all bijective semilinear maps $V\to V$, where $V$ is some $F$-vector space, is called the \emph{general semilinear group} of $V$. The general linear group $\GL(V,F)$ of $V$ over $F$ is a normal subgroup of $\GammaL(V,F)$ and the quotient group $\GammaL(V,F)/\GL(V,F)$ is isomorphic to $\Aut(F)$ via $f\GL(V,F)\mapsto\theta_f$. This means that the general semilinear group of $V$ over $F$ is (isomorphic to) the semidirect product of the general linear group of $V$ over $F$ and the automorphism group of $F$:
\[
\GammaL(V,F)\simeq \GL(V,F)\rtimes\Aut(F).
\]

\begin{definition}
The \emph{projective general semilinear group of an $F$-vector space} $V$ is
\[
\PGammaL(V,F):=\PGL(V,F)\rtimes\Aut(F).
\]
We will also write $\PGammaL(n,F)$ for $\PGammaL(F^{n+1},F)$.
\end{definition}

\begin{lemma}\label{contsemil}
If $(F,\tau)$ is a topological field and $n,m\in\N$, then a semilinear map $f:F^n\to F^m$ is continuous (with respect to the product topologies) if and only if $\theta_f\in\Aut_\tau(F)$.
\end{lemma}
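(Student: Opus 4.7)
The claim is trivial if $f = 0$, since any $\theta_f\in\Aut_\tau(F)$ validates (SL1)--(SL2) and the zero map is continuous; I therefore assume $f\neq 0$. Let $e_1,\ldots,e_n$ denote the standard basis of $F^n$ and set $w_i := f(e_i) \in F^m$. Combining (SL1) and (SL2) yields the closed-form expression
\[
f(a_1,\ldots,a_n) = \sum_{i=1}^n \theta_f(a_i)\, w_i,
\]
which packages all the information about $f$ into $\theta_f$ and the fixed vectors $w_1,\ldots,w_n$. Both directions will be read off from this formula.

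For the \emph{if} direction, I would assume $\theta_f\in\Aut_\tau(F)$ and note that each summand $(a_1,\ldots,a_n)\mapsto\theta_f(a_i)\,w_i$ factors as the continuous $i$-th coordinate projection $F^n\to F$, followed by the continuous automorphism $\theta_f$, followed by scalar multiplication by the fixed vector $w_i$ (continuous componentwise as a consequence of (TF)). Summing the $n$ resulting continuous maps in the topological abelian group $(F^m,+)$ yields continuity of $f$.

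For the \emph{only if} direction, I would choose indices $i,j$ with $(w_i)_j\neq 0$, which exist since $f\neq 0$. Consider the composition
\[
F \xrightarrow{\,\iota_i\,} F^n \xrightarrow{\,f\,} F^m \xrightarrow{\,\pi_j\,} F, \qquad a \longmapsto \theta_f(a)\cdot (w_i)_j,
\]
with $\iota_i(a):=a e_i$ and $\pi_j$ the $j$-th coordinate projection; both are continuous, so the whole composition is a continuous self-map of $F$. Since multiplication by the fixed element $(w_i)_j^{-1}\in F^\times$ is a continuous self-map of $F$, composing with it recovers $\theta_f$ as a continuous endomorphism of $F$, so $\theta_f\in\Aut_\tau(F)$.

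I do not anticipate any substantive obstacle. The only mild point to verify is that multiplication by a fixed nonzero element of $F$ is continuous on all of $F$ (not merely on $F^\times$, which is what (TF) gives directly); this follows because the map $a\mapsto ca$ is an additive endomorphism of the topological group $(F,+)$ and, for group homomorphisms between topological groups, continuity at a single point (e.g.\ any point of $F^\times$) is enough.
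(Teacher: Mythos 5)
Your proof is correct and follows essentially the same route as the paper's: both expand $f$ via semilinearity into the closed form $\sum_i \theta_f(a_i)f(v_i)$ and read off each direction by composing with coordinate inclusions/projections (the paper merely normalizes a basis so that $f(v_i)=1$ and reduces to $m=1$ first). Your closing remark justifying that multiplication by a fixed nonzero element is continuous on all of $F$ — which axiom (TF) does not give directly — is a welcome extra precision that the paper's proof leaves implicit.
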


\begin{proof}
It suffices to prove the statement for non-trivial semilinear maps $f:F^n\to F$. Let $v_1\in F^n$ be such that $f(v_1)=1$. Complete $v_1$ to a basis $v_1,\ldots,v_n$ of $F^n$. After eventually replacing $v_i$ with $f(v_i)^{-1}v_i$, we can assume that $f(v_i)=1$ for all $1\leq i\leq n$. By semilinearity we obtain that
\[
f\left(\sum_{i=1}^n a_iv_i\right)=\sum_{i=1}^n\theta_f(a_i).
\]
Therefore, if $\theta_f$ is continuous, then $f$ is continuous. Conversely, if $f$ is continuous, then for any fixed $a\in F$ we have that
\[
\theta_f(a)=\theta_f(a)f(v_1)=f(av_1)
\]
and thus $\theta_f$ is continuous at $a$, since scalar multiplication by $a$ and $f$ are continuous. Since $a$ was arbitrary, this proves the continuity of $\theta_f$.
\end{proof}

Motivated by the above observation we define:

\begin{definition}
Let $(F,\tau)$ be a topological field. The \emph{projective general continuous semilinear group of an $F$-vector space} $V$ is
\[
\PGammaL_\tau(V,F):=\PGL(V,F)\rtimes\Aut_\tau(F).
\]
We will also write $\PGammaL_\tau(n,F)$ for $\PGammaL_\tau(F^{n+1},F)$. 
\end{definition}

We now shift our attention to the description of collineations between projective geometries as isomorphisms of the associated $\mathbf{K}$-vector spaces.

\begin{lemma}\label{isocoll}
Let $(\mathcal{P},\mathcal{L})$ and $(\mathcal{P}',\mathcal{L}')$ be projective geometries. Then $H(\mathcal{P},\mathcal{L})$ and $H(\mathcal{P}',\mathcal{L}')$ are isomorphic as $\mathbf{K}$-vector spaces if and only if there is a collineation between $(\mathcal{P},\mathcal{L})$ and $(\mathcal{P}',\mathcal{L}')$.   
\end{lemma}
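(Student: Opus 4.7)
The plan is to unwind both sides to essentially the same statement: a bijection of the underlying point sets that preserves the incidence ternary relation, one direction coming from Lemma \ref{azaxay}, the other from the explicit description of $\boxplus$ in $H(\mathcal{P},\mathcal{L})$ given in Proposition \ref{CC}.

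For the direction ``collineation $\Rightarrow$ isomorphism of $\mathbf{K}$-vector spaces'', I would start with a collineation $(\alpha,\beta)$ between $(\mathcal{P},\mathcal{L})$ and $(\mathcal{P}',\mathcal{L}')$ and extend $\alpha$ to a bijection $\hat\alpha:H(\mathcal{P},\mathcal{L})\to H(\mathcal{P}',\mathcal{L}')$ by declaring $\hat\alpha(0):=0'$. The verification that $\hat\alpha(x\boxplus y)=\hat\alpha(x)\boxplus'\hat\alpha(y)$ reduces to three cases: (i) one of $x,y$ equals $0$, where both sides equal $\{\hat\alpha(x)\}$ (or $\{\hat\alpha(y)\}$); (ii) $x=y\neq 0$, where both sides equal $\{0',\alpha(x)\}$; (iii) $x\neq y$ both nonzero, where the collineation identity $\beta(\ell(x,y))=\ell(\alpha(x),\alpha(y))$ combined with the injectivity of $\alpha$ on $\{x,y\}$ yields
\[
\hat\alpha(x\boxplus y)=\alpha(\ell(x,y)\setminus\{x,y\})=\ell(\alpha(x),\alpha(y))\setminus\{\alpha(x),\alpha(y)\}=\hat\alpha(x)\boxplus'\hat\alpha(y).
\]
Since $\hat\alpha$ is the identity-extension of a bijection and $\hat\alpha(0)=0'$, it is a $\mathbf{K}$-vector space isomorphism.

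For the converse, I would start with an isomorphism $\varphi:H(\mathcal{P},\mathcal{L})\to H(\mathcal{P}',\mathcal{L}')$ of canonical hypergroups. First, because $0$ is characterized among elements of a canonical hypergroup by $0\boxplus x=\{x\}$ for all $x$ (equivalently it is the unique neutral element, cf.\ axiom (CH3)), the isomorphism must satisfy $\varphi(0)=0'$, so $\alpha:=\varphi|_{\mathcal{P}}$ is a bijection $\mathcal{P}\to\mathcal{P}'$. To prove $\alpha$ preserves incidence, fix distinct $a,b\in\mathcal{P}$ and $x\in\mathcal{P}$. If $x\in\{a,b\}$ the conclusion is immediate, so assume $x\notin\{a,b\}$; then by Proposition \ref{CC}, $x\in\ell(a,b)$ is equivalent to $x\in a\boxplus b$, and applying $\varphi$ gives $\alpha(x)\in\varphi(a\boxplus b)=\alpha(a)\boxplus'\alpha(b)=\ell(\alpha(a),\alpha(b))\setminus\{\alpha(a),\alpha(b)\}$, hence $\alpha(x)\in\ell(\alpha(a),\alpha(b))$. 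The reverse implication follows by running the same argument with $\varphi^{-1}$. Having verified the equivalence in Lemma \ref{azaxay}, I invoke that lemma to promote $\alpha$ to a genuine collineation $(\alpha,\beta)$.

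No serious obstacle is anticipated; the proof is a bookkeeping exercise that hinges on two observations: that $0$ is canonically distinguished in $H$ so every hypergroup isomorphism must preserve it, and that the two definitions $\ell(x,y)\setminus\{x,y\}=x\boxplus y$ (for distinct $x,y$) let one translate incidence-preservation into hyperaddition-preservation essentially verbatim. The only mild subtlety is remembering to handle the ``diagonal'' case $x=y$, where $x\boxplus x=\{0,x\}$ forces $\varphi$ (resp.\ $\hat\alpha$) to send $0$ to $0'$ consistently; this is automatic from the way $\varphi$ (resp.\ $\hat\alpha$) was constructed, so it causes no additional trouble.
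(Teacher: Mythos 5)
Your proof is correct and follows exactly the route the paper intends: the paper's own proof is the one-line remark that the statement ``easily follows from Lemma \ref{azaxay}'', and your case analysis (using the explicit description of $\boxplus$ from Proposition \ref{CC} to translate between incidence preservation and hyperaddition preservation, then invoking Lemma \ref{azaxay} to promote the point bijection to a collineation) is precisely the bookkeeping the authors left implicit. Nothing is missing and no different idea is used.
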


\begin{proof}
This easily follows from Lemma \ref{azaxay}.
\end{proof}

\begin{lemma}
Let $F,F'$ be fields and $V,W$ vector spaces over $F$ and $F'$, respectively. Take a bijective semilinear map $f:V\to W$ and define
\begin{align*}
\alpha_f:\mathbb{P}(V)&\to\mathbb{P}(W)\\
[v]&\mapsto[f(v)]
\end{align*}
Then $\alpha_f$ and $\beta_f(\ell([v],[v'])):=\ell(\alpha([v]),\alpha([v']))$ ($v,v'\in \mathbb{P}(V)$, $v\neq v'$) are bijections. In particular, $(\alpha_f,\beta_f)$ is a collineation between the projective geometries $(\mathcal{P},\mathcal{L})$ and $(\mathcal{P}',\mathcal{L}')$ realised as $\mathbb{P}(V)$ and $\mathbb{P}(W)$, respectively. If $F$ and $F'$ are topological fields, $V$ and $W$ are finite-dimensional and $f$ is continuous (with respect to the product topologies), then the collineation $(\alpha_f,\beta_f)$ is continuous (with respect to the quotient topologies).
\end{lemma}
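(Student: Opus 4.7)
The plan is to verify each assertion in turn, using the key fact that the automorphism $\theta_f$ intertwines scalar multiplication on $V$ and $W$, so that projective classes, lines, and topological structure all descend cleanly from $V$ to $\mathbb{P}(V)$.

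First I would check that $\alpha_f$ is well defined and bijective. If $[v]=[v']$ in $\mathbb{P}(V)$, then $v'=av$ for some $a\in F^\times$, and by (SL2) we have $f(v')=\theta_f(a)f(v)$ with $\theta_f(a)\in (F')^\times$, so $[f(v)]=[f(v')]$ in $\mathbb{P}(W)$. Injectivity of $\alpha_f$ goes through the same identity in reverse: if $[f(v)]=[f(v')]$, write $f(v')=bf(v)$ with $b=\theta_f(a)$ (possible since $\theta_f$ is a bijection of fields), obtain $f(v')=f(av)$ and conclude $v'=av$ from the injectivity of $f$. Surjectivity of $\alpha_f$ follows from the surjectivity of $f$ together with $f(0)=0$ (since any $[w]\in\mathbb{P}(W)$ has $w\ne 0$, and any preimage $v$ of $w$ under $f$ must also be nonzero).

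Next I would verify the collineation property. For distinct $[v],[v']\in \mathbb{P}(V)$, the point $[z]$ lies on $\ell([v],[v'])$ precisely when there exist $a,b\in F$, not both zero, with $z=av+bv'$. Applying $f$ and using (SL1) and (SL2) yields
\begin{equation}
f(z)=\theta_f(a)f(v)+\theta_f(b)f(v'),
\end{equation}
with $\theta_f(a),\theta_f(b)$ not both zero, so $\alpha_f([z])\in\ell(\alpha_f([v]),\alpha_f([v']))$. Conversely, any point of the right-hand line has the form $[cf(v)+df(v')]$, and writing $c=\theta_f(a)$, $d=\theta_f(b)$ via the surjectivity of $\theta_f$ shows it is $\alpha_f$ of a point of $\ell([v],[v'])$. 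Hence $\alpha_f$ preserves incidence in both directions, and Lemma \ref{azaxay} implies that $\beta_f$ is a well-defined bijection $\mathcal{L}\to\mathcal{L}'$ making $(\alpha_f,\beta_f)$ a collineation.

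Finally, for continuity, let $\pi:V\setminus\{0\}\to\mathbb{P}(V)$ and $\pi':W\setminus\{0\}\to\mathbb{P}(W)$ denote the canonical quotient maps. Since $f$ is continuous (and $f(0)=0$ by semilinearity, so $f$ restricts to $V\setminus\{0\}\to W\setminus\{0\}$), the composition $\pi'\circ f$ is continuous, and by construction $\pi'\circ f=\alpha_f\circ\pi$. The universal property of the quotient topology on $\mathbb{P}(V)$ then gives the continuity of $\alpha_f$. The only mild subtlety throughout is keeping $\theta_f$ bookkeeping straight in the incidence step; once that is in place, everything is formal.
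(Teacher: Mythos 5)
Your proof is correct, and it is organized a little differently from the paper's. For the bijectivity of $\beta_f$, the paper argues directly: it takes two lines $\ell([v],[v'])$ and $\ell([u],[u'])$ with the same image, writes $f(u)=af(v)+bf(v')=f(\theta_f^{-1}(a)v+\theta_f^{-1}(b)v')$ (and similarly for $u'$), and uses injectivity of $f$ to pull the generators of one line back onto the other. You instead verify that $\alpha_f$ preserves incidence in both directions (using surjectivity of $\theta_f$ for the converse direction) and then invoke Lemma \ref{azaxay} to get that $\beta_f$ is a well-defined bijection; this is the same underlying computation packaged through an already-proved lemma, and it has the small advantage of also handling well-definedness of $\beta_f$ explicitly, which the paper glosses over. (Strictly, Lemma \ref{azaxay} is stated for a bijection of a projective geometry to itself, but its proof works verbatim for two geometries, so the citation is harmless.) For continuity, the paper simply points to Lemma \ref{contsemil}, whereas you give the complete argument via the commuting square $\pi'\circ f=\alpha_f\circ\pi$ and the universal property of the quotient topology; your version is the more self-contained one, since the hypothesis already supplies continuity of $f$ and what is really needed is precisely the descent to the quotient.
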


\begin{proof}
The bijectivity of $\alpha_f$ follows easily from the bijectivity of the semilinear map $f$ and the surjectivity of $\alpha_f$ implies the surjectivity of $\beta_f$. It remains to verify that $\beta_f$ is injective. If $\ell:=\ell([v],[v'])$ and $\ell'=\ell([u],[u'])$ are lines in $\mathcal{L}$ such that $\beta_f(\ell)=\beta_f(\ell')$. By definition of $\alpha_f$ and $\beta_f$, we obtain that $\ell([f(v)],[f(v')])=\ell([f(u)],[f(u')])$. In particular,
\[
[f(u)],[f(u')]\in\ell([f(v)],[f(v')]),
\]
so 
\begin{align*}
f(u)&=af(v)+bf(v')=f(\theta_f^{-1}(a)v+\theta_f^{-1}(b)v')~~\text{ and}\\
f(u')&=cf(v)+df(v')=f(\theta_f^{-1}(c)u+\theta_f^{-1}(d)u')
\end{align*}
holds for some $a,b,c,d\in F^\times$, where we have used (SL1) and (SL2). The bijectivity of $f$ now implies that $u=\theta_f^{-1}(a)v+\theta_f^{-1}(b)v'\in\ell$ and $u'=\theta_f^{-1}(c)u+\theta_f^{-1}(d)u'\in\ell$. Thus, $\ell=\ell'$ and therefore $\beta_f$ is injective.

The last assertion on continuity is straightforward to verify on the basis of Lemma \ref{contsemil}. 
\end{proof}

\begin{fact}[Theorem 3.5.8 in \cite{BR98}]\label{FTPr}
Let $F_1,F_2$ be fields and $n,m\in\N_{>1}$. If there is a collineation $(\alpha,\beta)$ between the projective geometries $(\mathcal{P},\mathcal{L})$ and $(\mathcal{P}',\mathcal{L}')$ realised as $\mathbb{P}_{F_1}^n$ and $\mathbb{P}_{F_2}^m$, respectively, then $n=m$ and there is a unique bijective semilinear map $f:F_1^{n+1}\to F_2^{n+1}$ such that $\alpha=\alpha_f$ and $\beta=\beta_f$. In particular, $F_1$ and $F_2$ are isomorphic as fields.
\end{fact}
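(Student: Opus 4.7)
The plan is to follow the classical von Staudt--style reconstruction of the field from incidence data.

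First I would argue that a collineation preserves projective subspaces of every dimension, which forces $n=m$. The point is that a $k$-dimensional subspace of $\mathbb{P}_{F_i}^{n_i}$ can be characterized purely by incidence --- it is the smallest set of points closed under the operation $(x,y)\mapsto\ell(x,y)$ containing a fixed $(k{+}1)$-subset in general position --- so any collineation sends $k$-subspaces bijectively to $k$-subspaces of the same dimension. Comparing the resulting dimension functions of the two projective geometries gives $n=m$.

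Next I would fix a projective frame $e_0,\ldots,e_n,u$ in $\mathbb{P}_{F_1}^n$ where $e_i=[0{:}\cdots{:}1{:}\cdots{:}0]$ and $u=[1{:}\cdots{:}1]$, and set $e_i':=\alpha(e_i)$, $u':=\alpha(u)$. By the first step, the $e_i'$ are in general position and $u'$ lies outside every hyperplane spanned by a proper subset of them, so after choosing arbitrary representatives $\bar e_i'\in F_2^{n+1}$ there is a unique rescaling for which $\bar e_0'+\cdots+\bar e_n'$ represents $u'$. This pins down a candidate basis in $F_2^{n+1}$ and, via the associated coordinates, a candidate lift $f\colon F_1^{n+1}\to F_2^{n+1}$ together with a map $\theta\colon F_1\to F_2$ extracted from the action of $\alpha$ on a single coordinate line.

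The main obstacle --- and the heart of the proof --- is showing that $\theta$ is a field isomorphism. For this I would restrict attention to a coordinate line, say $\ell(e_0,e_1)$, which carries a natural coordinate $F_1\cup\{\infty\}$ via $[a{:}b{:}0{:}\cdots{:}0]\leftrightarrow a/b$; the image line $\ell(e_0',e_1')$ inherits an analogous coordinate in $F_2\cup\{\infty\}$, and $\alpha$ restricts to a bijection $F_1\cup\{\infty\}\to F_2\cup\{\infty\}$ fixing $0,1,\infty$ thanks to the frame normalization. The classical von Staudt construction realizes both the sum and the product of two elements of the line as the output of purely incidence-theoretic configurations involving auxiliary points off the line (two perspectivities through an external point for addition, three for multiplication); here the hypothesis $n>1$ is used, since we need a genuine plane in which to perform the construction. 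Because $(\alpha,\beta)$ transports these configurations from $\mathbb{P}_{F_1}^n$ to $\mathbb{P}_{F_2}^n$ verbatim, the induced coordinate map respects addition and multiplication, so $\theta$ is a ring homomorphism; applying the same argument to $\alpha^{-1}$ gives a two-sided inverse, so $\theta$ is a field isomorphism. A coherence check --- comparing the coordinate maps built from the lines $\ell(e_0,e_i)$ for varying $i$, coupled through the unit point $u$ --- shows the \emph{same} $\theta$ governs every coordinate, so $f$ is semilinear in the generalized sense of the statement and $\alpha=\alpha_f$, $\beta=\beta_f$.

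Uniqueness is then straightforward: if $f'$ is another semilinear lift of $\alpha$, then for each $v\in F_1^{n+1}\setminus\{0\}$ we have $f'(v)=c(v)f(v)$ for some $c(v)\in F_2^\times$; linearity of $f$ and $f'$ (relative to their respective field isomorphisms) forces $c$ to be constant on the frame $\{\bar e_0,\ldots,\bar e_n\}$, and evaluating on the representative of $u$ normalized as $\bar e_0+\cdots+\bar e_n$ then forces $c\equiv 1$, which pins down both the linear part and the accompanying field isomorphism.
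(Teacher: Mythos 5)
The paper does not prove this statement at all --- it is imported verbatim as a Fact from \cite{BR98} --- so there is no internal argument to compare against; your von Staudt reconstruction is the standard proof of the cited theorem, and the parts establishing $n=m$, the existence of a semilinear lift, and $F_1\simeq F_2$ are essentially correct (preservation of subspaces via closure under $\ell(\cdot,\cdot)$, frame normalization, and the incidence-theoretic realization of $+$ and $\cdot$ in a plane, which is where $n>1$ enters).

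The genuine gap is in the uniqueness step. A bijective semilinear map is determined by the collineation it induces only up to a nonzero scalar: for any $c\in F_2^\times$ the map $cf$ is again bijective and semilinear with the same companion automorphism, and $\alpha_{cf}([v])=[cf(v)]=[f(v)]=\alpha_f([v])$. Your claim that the frame normalization pins $f$ down fails for the same reason at two points. First, the rescaling of the representatives $\bar e_0',\ldots,\bar e_n'$ making $\bar e_0'+\cdots+\bar e_n'$ represent $u'$ is itself only unique up to a common scalar. Second, in the final step, if $f'=cf$ then $f'(\bar e_0)+\cdots+f'(\bar e_n)=c\bigl(f(\bar e_0)+\cdots+f(\bar e_n)\bigr)=f'(\bar u)$, so the normalization is preserved and does not force $c=1$. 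The correct conclusion --- and what \cite{BR98} actually delivers --- is that $f$ is unique up to multiplication by an element of $F_2^\times$, i.e., the induced element of $\PGammaL(n,F_2)$ is unique; the literal uniqueness asserted in the Fact as transcribed is an overstatement, and this weaker uniqueness is all the paper uses (it only needs $F_1\simeq F_2$ and the identification of the collineation group with $\PGammaL$). You should either prove uniqueness modulo scalars, or explain why no stronger statement can hold.
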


\begin{remark}
It follows by the definition of the quotient topology that if the collineation given by hypothesis of the above theorem is continuous, then the semilinear map which induces it must be continuous as well.    
\end{remark}

\begin{corollary}
Let $n>1$ be an integer. If $F$ is any field, then collineation group of the projective geometry realised as the projective space $\mathbb{P}_F^n$ is isomorphic to the projective general semilinear group $\PGammaL(n,F)$. If $(F,\tau)$ is a Hausdorff topological field, then the group\footnote{Recall that a continuous bijection between compact Hausdorff spaces is a homeomorphism.} of continuous collineations of the projective geometry realised as the projective space $\mathbb{P}_F^n$ is isomorphic to the projective general continuous semilinear group $\PGammaL_\tau(n,F)$.
\end{corollary}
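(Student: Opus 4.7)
The plan is to construct an explicit group homomorphism
\[
\Phi:\GammaL(F^{n+1},F)\to\Col(\mathcal{P},\mathcal{L})
\]
sending a bijective semilinear map $f$ to the collineation $(\alpha_f,\beta_f)$ produced by the lemma preceding Fact~\ref{FTPr}. I would first check that $\Phi$ is a group homomorphism: for $f,g\in\GammaL(F^{n+1},F)$ and any $v\in F^{n+1}\setminus\{0\}$ one has $\alpha_{f\circ g}([v])=[f(g(v))]=\alpha_f(\alpha_g([v]))$, and since $\beta$ is determined by the point-part $\alpha$ via Lemma~\ref{azaxay}, this suffices. Surjectivity of $\Phi$ is exactly the content of Fact~\ref{FTPr} applied with $F_1=F_2=F$ and $n=m>1$: every collineation of the projective geometry realised as $\mathbb{P}_F^n$ is induced by some bijective semilinear $f$.

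Next, I would identify $\ker\Phi$ with the centre $Z:=F^\times\cdot\mathrm{id}_{F^{n+1}}$ of $\GL(F^{n+1},F)$. Indeed, $\alpha_f=\mathrm{id}_\mathcal{P}$ means that for every nonzero $v\in F^{n+1}$ there exists $\lambda(v)\in F^\times$ with $f(v)=\lambda(v)\,v$; expanding $f(v+w)=f(v)+f(w)$ for linearly independent $v,w$ (which exist since $n+1\ge 3$) forces $\lambda$ to be constant, so $f\in Z$ and in particular $\theta_f=\mathrm{id}_F$. Under the identification $\GammaL(F^{n+1},F)\simeq\GL(F^{n+1},F)\rtimes\Aut(F)$ recalled in Section~\ref{Sec6}, the subgroup $Z$ is normal and stable under the $\Aut(F)$-action (since $\theta(aI)=\theta(a)I\in Z$), so quotienting yields
\[
\GammaL(F^{n+1},F)/Z\simeq\PGL(F^{n+1},F)\rtimes\Aut(F)=\PGammaL(F^{n+1},F)=\PGammaL(n,F),
\]
which establishes the first assertion.

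For the second assertion, I would restrict $\Phi$ to the subgroup of continuous bijective semilinear maps. By Lemma~\ref{contsemil} this subgroup is precisely $\GL(F^{n+1},F)\rtimes\Aut_\tau(F)$: every $F$-linear map has $\theta_f=\mathrm{id}_F\in\Aut_\tau(F)$ and is therefore automatically continuous, so only the companion automorphism is constrained. The continuity clause in the lemma preceding Fact~\ref{FTPr} guarantees that maps in this subgroup induce continuous collineations, while the remark immediately after Fact~\ref{FTPr} gives the converse. Since $Z$ consists of (continuous) scalar multiplications, the same quotient construction yields the desired isomorphism with $\PGammaL_\tau(n,F)$.

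The main technical content has been entirely outsourced to the Fundamental Theorem of Projective Geometry (Fact~\ref{FTPr}) together with the remark following it; what remains---the homomorphism check, the kernel identification, the semidirect product quotient, and the continuity refinement---is a routine application of Lemmas~\ref{azaxay} and~\ref{contsemil}. The only mildly subtle step is the computation of $\ker\Phi$, which relies on the standard observation that a bijective semilinear map preserving every line through the origin must be a scalar multiple of the identity.
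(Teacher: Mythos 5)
Your proof is correct and follows exactly the route the paper intends: the paper leaves this corollary without an explicit proof, treating it as immediate from the lemma constructing $(\alpha_f,\beta_f)$, Fact~\ref{FTPr} (surjectivity), the remark after it, and Lemma~\ref{contsemil} (the continuity refinement). Your added details --- the homomorphism check, the identification of the kernel with the scalar maps $F^\times\cdot\mathrm{id}$, and the semidirect-product bookkeeping --- are the standard argument and are all sound.
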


\begin{theorem}\label{keythm}
Take a field $F\neq\mathbb{F}_2$. If the projective geometry associated $\mathbb{P}_F^n$ for some integer $n>1$ admits an incidence abelian group structure, then the $\mathbf{K}$-vector space associated to the corresponding projective geometry is isomorphic to a $\mathbf{K}$-vector space of the form $K_{k^\times}$ for some proper finite field extension $K|k$ of degree $n+1$. In addition, $k$ and $F$ are isomorphic as fields.
\end{theorem}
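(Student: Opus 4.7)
The strategy is to combine the structural result \cite[Theorem 3.8]{CC11}---already invoked in the proof of Corollary \ref{hyptwo}---with the Fundamental Theorem of Projective Geometry recalled as Fact \ref{FTPr}.

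First, since $\mathcal{P}$ carries an incidence abelian group structure by hypothesis, Lemma \ref{hyptwwo} endows $H := H(\mathcal{P},\mathcal{L})$ with the structure of a hyperfield whose additive hypergroup is the canonical hypergroup of $(\mathcal{P},\mathcal{L})$. Because $(\mathcal{P},\mathcal{L})$ is realised as $\mathbb{P}_F^n$ with $n > 1$, it is Desarguesian, and therefore \cite[Theorem 3.8]{CC11} yields an isomorphism of $\mathbf{K}$-vector spaces $H \simeq K_{k^\times}$ for some proper field extension $K|k$. The $\mathbf{K}$-dimension of $H$ equals $n+1$ (the content of the last, unnumbered corollary of Section \ref{Sec3}), whereas by Lemma \ref{Kdim} the $\mathbf{K}$-dimension of $K_{k^\times}$ equals $[K:k]$; comparing these gives $[K:k] = n+1$, so the extension is proper and finite of the claimed degree.

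To conclude I show $k \simeq F$. The $\mathbf{K}$-vector space isomorphism $H \simeq K_{k^\times}$ produces, via Lemma \ref{isocoll}, a collineation between the projective geometry realised as $\mathbb{P}_F^n$ and the one associated with $K_{k^\times}$. Since $K$ is a $k$-vector space of dimension $n+1$, the latter geometry is realised as $\mathbb{P}_k^n$. Because $n > 1$, Fact \ref{FTPr} applies to this collineation between $\mathbb{P}_F^n$ and $\mathbb{P}_k^n$ and forces it to come from a bijective semilinear map $F^{n+1} \to k^{n+1}$, which in particular yields $F \simeq k$ as fields.

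The substantive input is \cite[Theorem 3.8]{CC11}, which reconstructs the factor-hyperfield presentation from pure hyperfield data. The remaining steps are a direct assembly of preceding lemmas and the Fundamental Theorem. A self-contained alternative would have to produce, for each $a \in H^\times$, a semilinear lift $f_a$ of the multiplication map $\alpha_a$ acting on $F^{n+1}$---available directly from Fact \ref{FTPr}---and then reassemble the field extension $K|k$ from the collection $\{f_a\}_{a\in H^\times}$ inside the semilinear endomorphisms of $F^{n+1}$, using commutativity of $H^\times$ to control the twisting by automorphisms of $F$; this is essentially what \cite[Theorem 3.8]{CC11} accomplishes in a more general setting.
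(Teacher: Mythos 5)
Your proposal is correct and follows essentially the same route as the paper's own proof: Lemma \ref{hyptwwo} plus \cite[Theorem 3.8]{CC11} to get $H \simeq K_{k^\times}$, a $\mathbf{K}$-dimension count via Lemma \ref{Kdim} to obtain $[K:k]=n+1$, and then Lemma \ref{isocoll} together with Fact \ref{FTPr} applied to the resulting collineation between $\mathbb{P}_F^n$ and $\mathbb{P}_k^n$ to conclude $F\simeq k$. The only cosmetic difference is that you cite the unnumbered dimension corollary of Section \ref{Sec3} where the paper argues directly that an isomorphism of $\mathbf{K}$-vector spaces carries a basis to a basis.
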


\begin{proof}
If the projective geometry associated $\mathbb{P}_F^n$ for some $n\in\N_{>1}$ admits an incidence abelian group structure, then the associated $\mathbf{K}$-vector space $\mathbb{H}_F^n$, is the additive hypergroup of some hyperfield $H$, by Lemma \ref{hyptwwo}. By \cite[Theorem 3.8]{CC11}, since the projective geometry realised as $\mathbb{P}_F^n$ is Desarguesian (see e.g.\ \cite[Theorem 2.2.1]{BR98}), we have that $H\simeq K_{k^\times}$ as $\mathbf{K}$-vector spaces for some proper field extension $K|k$. Let
\[
\sigma:\mathbb{H}_F^n\to K_{k^\times}
\]
be this isomorphism. Since an isomorphism of $\mathbf{K}$-vector spaces sends a basis to a basis, we obtain that the dimension of $K_{k^\times}$ as a $\mathbf{K}$-vector space must be $n+1$. By Lemma \ref{Kdim}, we conclude that $[K:k]=n+1$. In addition, $\sigma$ induces a collineation $(\alpha,\beta)$ between $\mathbb{P}_F^{n}$ and the projective geometry associated to the additive hypergroup of $K_{k^\times}$.\par
The $\mathbf{K}$-vector space $\mathbb{H}_k^n$ associated to the projective geometry realised as $\mathbb{P}_k^n$ is up to isomorphism the additive hypergroup of $K_{k^\times}$ by Corollary \ref{PnF}. We have thus obtained a collineation between the projective geometry realised as $\mathbb{P}_F^n$ and the one realised as $\mathbb{P}_k^n$. It follows now from Fact \ref{FTPr} that $F\simeq k$ as fields.
\end{proof}

From the proof of the above theorem and Lemma \ref{isocoll}, we deduce the following result.

\begin{corollary}
Let $F$ be a field and let $n\in\N_{>1}$. The projective geometry $(\mathcal{P},\mathcal{L})$ realised as $\mathbb{P}_F^n$ admits an incidence abelian group structure with operation $\cdot$ and neutral element $e$ if and only if there is a field extension $K$ of $F$ such that $[K:F]=n+1$ and $(H(\mathcal{P},\mathcal{L}),\cdot,e)\simeq K_{F^\times}$ as hyperfields. Moreover, $\PGammaL(n,F)$ is a subgroup of the group of automorphisms of the multiplicative quotient group $K^\times/F^\times$. 
\end{corollary}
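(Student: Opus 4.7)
My plan is to derive the corollary directly from Theorem \ref{keythm} and Lemma \ref{isocoll}, tracing the additional multiplicative information through the argument. For the ``only if'' direction, I begin with an incidence abelian group structure $\cdot$ on $\mathcal{P}$ with identity $e$. By Lemma \ref{hyptwwo}, extending $\cdot$ by $0 \cdot x = x \cdot 0 = 0$ makes $H(\mathcal{P},\mathcal{L})$ into a hyperfield whose multiplicative group is $(\mathcal{P},\cdot)$. The proof of Theorem \ref{keythm} invokes \cite[Theorem 3.8]{CC11}, which supplies a hyperfield isomorphism $\sigma\colon H(\mathcal{P},\mathcal{L}) \to K_{k^\times}$ for some proper field extension $K|k$; the $\mathbf{K}$-dimension computation forces $[K:k]=n+1$, and the collineation argument based on Fact \ref{FTPr} yields a field isomorphism $\varphi\colon k \to F$.

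To land in an extension of the prescribed field $F$, I would transport $K|k$ across $\varphi$. Concretely, pick a primitive element of $K$ with minimal polynomial $p(X)\in k[X]$ of degree $n+1$, and let $K' := F[X]/(q(X))$, where $q$ is obtained from $p$ by applying $\varphi$ to the coefficients. Then $\varphi$ extends uniquely to a field isomorphism $\psi\colon K \to K'$, which maps $k^\times$ onto $F^\times$ and hence descends to a hyperfield isomorphism $K_{k^\times} \simeq K'_{F^\times}$. Composing with $\sigma$ delivers the desired hyperfield isomorphism $(H(\mathcal{P},\mathcal{L}),\cdot,e) \simeq K'_{F^\times}$. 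The converse direction is immediate: given the data, pulling back the multiplication of $K_{F^\times}$ along the isomorphism equips $\mathcal{P}$ with an abelian group operation, and Lemma \ref{hyptwwo} certifies it as an incidence abelian group structure with identity $e$.

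For the final clause concerning $\PGammaL(n,F)$, I would invoke Lemma \ref{isocoll}: the underlying $\mathbf{K}$-vector space isomorphism $H(\mathcal{P},\mathcal{L}) \simeq K_{F^\times}$ produces a collineation between $\mathbb{P}_F^n$ and $\mathbb{P}(K)$, under which the collineation group $\PGammaL(n,F)$ of $\mathbb{P}_F^n$ (by Fact \ref{FTPr}) transports to the group of self-maps of the underlying set of $K^\times/F^\times$ which preserve the projective, equivalently additive-hypergroup, structure inherited from $K_{F^\times}$. This gives the asserted embedding into the automorphism group associated to the multiplicative quotient $K^\times/F^\times$.

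The step I expect to be the main obstacle is confirming that \cite[Theorem 3.8]{CC11} really yields a hyperfield isomorphism and not merely an isomorphism of $\mathbf{K}$-vector spaces, since the proof of Theorem \ref{keythm} explicitly records only the latter. If the stronger conclusion is not available off the shelf, I would have to supplement the argument by showing that the translation action of $(\mathcal{P},\cdot)$ on $\mathcal{P}$, once transported via $\sigma$, must coincide up to conjugation inside $\PGammaL(K,k)$ with the natural multiplication action of $K^\times/k^\times$ on itself. This amounts to classifying abelian subgroups of $\PGammaL(K,k)$ acting freely and transitively on $K^\times/k^\times$, and is the technical heart of promoting a $\mathbf{K}$-vector space isomorphism to a hyperfield isomorphism.
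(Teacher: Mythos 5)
Your argument follows the paper's route exactly: the paper deduces this corollary in a single line from the proof of Theorem \ref{keythm} together with Lemma \ref{isocoll}, and your write-up is precisely that deduction made explicit (including the transport of the extension $K|k$ across the field isomorphism $k\simeq F$ and the pull-back argument for the converse). The obstacle you flag at the end is not a genuine one: \cite[Theorem 3.8]{CC11} is a statement about hyperfield extensions of $\mathbf{K}$ and already delivers an isomorphism of hyperfields, not merely of the underlying $\mathbf{K}$-vector spaces, so the proposed classification of abelian subgroups of $\PGammaL(K,k)$ is not needed.
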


\begin{corollary}
Let $(F,\tau)$ be a local field and let $n\in\N_{>1}$. The projective geometry $(\mathcal{P},\mathcal{L})$ realised as $\mathbb{P}_F^n$ admits a continuous incidence abelian group structure with operation $\cdot$ and neutral element $e$ if and only if there is a field extension $K$ of $F$ of degree $n+1$ and such that $(H(\mathcal{P},\mathcal{L}),\cdot,e)\simeq K_{F^\times}$ as topological hyperfields. Moreover, $\PGammaL_\tau(n,F)$ is a subgroup of the group of continuous automorphisms of the topological quotient group $K^\times/F^\times$.
\end{corollary}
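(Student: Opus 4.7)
The plan is to assemble this corollary from three ingredients already established in the excerpt: Theorem~\ref{keythm} (for the algebraic half), Corollary~\ref{hyptwo} (for the topological refinement), and Fact~\ref{FTPr} combined with Lemma~\ref{contsemil} (for the \emph{moreover} clause). The conceptual work has essentially been done, so this should reduce to a careful chain of identifications rather than a new argument.

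For the forward implication, I assume $\mathbb{P}_F^n$ admits a continuous incidence abelian group structure $(\cdot,e)$. Lemma~\ref{Ellis} turns $(\mathcal{P},\cdot)$ into a topological group, and Lemma~\ref{hyptwwo} exhibits $H(\mathcal{P},\mathcal{L})$ as the additive hypergroup of a hyperfield whose multiplicative group is $(\mathcal{P},\cdot,e)$; this hyperfield is topological by construction. Corollary~\ref{hyptwo} then produces a local field $K$, a field extension $L|K$ of degree $n+1$, and a topological hyperfield isomorphism $H(\mathcal{P},\mathcal{L})\simeq L_{K^\times}$, while Theorem~\ref{keythm} forces $K\simeq F$ as abstract fields. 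The step that requires care is upgrading this abstract field isomorphism to a homeomorphism, for which I would invoke the standard fact that the canonical topology on a local field is determined by its field structure (via the essentially unique absolute value in the non-Archimedean case and the classification in the Archimedean case). Pulling the topology back along $K\simeq F$ then supplies the desired topological hyperfield isomorphism $H(\mathcal{P},\mathcal{L})\simeq K'_{F^\times}$ with $[K':F]=n+1$.

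The reverse implication is essentially immediate: a topological hyperfield isomorphism $H(\mathcal{P},\mathcal{L})\simeq K_{F^\times}$ transports the abelian group $(K^\times/F^\times,\cdot)$ onto $\mathcal{P}$ as a continuous group operation, and distributivity (HR3) in $K_{F^\times}$ is exactly the incidence-preservation condition singled out by Lemma~\ref{hyptwwo}, so the resulting structure is a continuous incidence abelian group structure on~$\mathcal{P}$.

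For the moreover clause, Fact~\ref{FTPr} identifies the collineations of $\mathbb{P}_F^n$ (with $n>1$) with elements of $\PGammaL(n,F)$, and Lemma~\ref{contsemil} refines this to an identification of the continuous collineations with $\PGammaL_\tau(n,F)$, since continuity of a semilinear bijection is equivalent to the companion field automorphism lying in $\Aut_\tau(F)$. Transporting this action along the topological hyperfield isomorphism $\mathcal{P}\simeq K^\times/F^\times$ places $\PGammaL_\tau(n,F)$ inside the group of continuous automorphisms of the topological group $K^\times/F^\times$. I expect the subtlest point here to be verifying that the transported collineations indeed respect the multiplicative group law of $K^\times/F^\times$: this is precisely the inclusion already asserted at the abstract level in the non-topological corollary preceding the present one, now enriched with the continuity provided by $\Aut_\tau(F)\leq\Aut(F)$ via Lemma~\ref{contsemil}.
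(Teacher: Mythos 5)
Your treatment of the equivalence is sound and follows the route the paper intends: Lemmas \ref{Ellis} and \ref{hyptwwo} package the structure as a topological hyperfield, Corollary \ref{hyptwo} produces the factor presentation $L_{K^\times}$ with $K$ a local field and $[L:K]=n+1$, Theorem \ref{keythm} identifies $K$ with $F$ as fields, and the converse is transport of structure plus distributivity. The one step you rightly isolate --- upgrading the abstract isomorphism $K\simeq F$ to a homeomorphism --- is better discharged with the paper's own tools than with the blanket claim that the topology of a local field is determined by its field structure: that claim fails for $\mathbb{C}$, which has abundant discontinuous automorphisms, and the Archimedean case is only rescued because Corollary \ref{RCniet} makes it vacuous. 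Instead, note that Corollary \ref{hyptwo} (via Proposition \ref{tophyp}) already hands you a homeomorphic collineation $\mathbb{P}_F^n\to\mathbb{P}_K^n$ with both sides carrying their standard topologies; Fact \ref{FTPr}, the Remark following it and Lemma \ref{contsemil} then make the companion isomorphism $\theta:F\to K$ continuous, and applying the same argument to the inverse collineation makes it a homeomorphism.

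The genuine gap is in the \emph{moreover} clause. You reduce it to the corresponding clause of the non-topological corollary, which the paper also states without proof, so nothing is actually established; and no transport-of-structure argument can close it, because an incidence-preserving bijection of $K^\times/F^\times$ need not respect the multiplication. Concretely, take $F=\mathbb{Q}_5$, $K=F(\theta)$ with $\theta^3=5$, and the $F$-linear map fixing $1$ and swapping $\theta\leftrightarrow\theta^2$; the induced continuous collineation $\alpha$ satisfies $\alpha([1+\theta])^2=[1+5\theta+2\theta^2]$ while $\alpha([1+\theta]^2)=[1+\theta+2\theta^2]$, and these classes are distinct. Hence the natural action of $\PGammaL_\tau(n,F)$ on $K^\times/F^\times$ does not land in the group automorphisms, so your argument for this part does not go through: one would need either a different embedding of $\PGammaL_\tau(n,F)$ or a reformulation of the clause (for instance, that the translations $\alpha_a$ embed the topological group $K^\times/F^\times$ into $\PGammaL_\tau(n,F)$, which is what the definition of a continuous incidence abelian group structure actually provides).
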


\begin{corollary}
Let $F$ be an algebraically closed field or a real closed field and $n\in\N_{>1}$. Then the projective geometry realised as $\mathbb{P}_F^n$ does not admit an incidence abelian group structure.    
\end{corollary}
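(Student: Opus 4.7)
The plan is to apply Theorem \ref{keythm} directly and reduce the corollary to a purely field-theoretic impossibility. Before invoking that theorem I would first check that the hypothesis $F\neq\mathbb{F}_2$ is satisfied in both cases: no finite field is algebraically closed, since the polynomial $1+\prod_{a\in F}(x-a)$ has no root in any finite $F$, so an algebraically closed $F$ is automatically infinite; a real closed field has characteristic zero. In both cases $F\neq\mathbb{F}_2$.

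Assume for contradiction that the projective geometry realised as $\mathbb{P}_F^n$ admits an incidence abelian group structure. Theorem \ref{keythm} then produces a field $k$ isomorphic to $F$ together with a proper finite field extension $K|k$ of degree exactly $n+1$. Since $n>1$ we have $n+1\geq 3$, and the heart of the argument is to rule out the existence of such an extension $K|k$.

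In the algebraically closed case, $k\simeq F$ is itself algebraically closed and thus admits no proper algebraic extensions whatsoever, already contradicting $[K:k]=n+1\geq 2$. In the real closed case, $k\simeq F$ is itself real closed and the Artin-Schreier theorem \cite{AS26,AS27} asserts that the only proper finite extension of $k$ is its algebraic closure $k(\sqrt{-1})$, which has degree $2<n+1$; again a contradiction.

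I do not anticipate any substantive obstacle here: once Theorem \ref{keythm} is available, the conclusion follows at once from the classical structure of finite extensions of algebraically closed and real closed fields, and the strict inequality $n+1\geq 3$ handles both cases uniformly.
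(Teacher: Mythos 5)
Your proposal is correct and follows essentially the same route as the paper: both reduce the statement via Theorem \ref{keythm} to the nonexistence of a proper finite extension of $F$ of degree $n+1\geq 3$, and then invoke the absence of proper algebraic extensions in the algebraically closed case and the Artin--Schreier theorem in the real closed case. Your preliminary check that $F\neq\mathbb{F}_2$ is a small added care the paper leaves implicit.
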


\begin{proof}
The existence of such a structure would imply the existence of a (finite and thus algebraic) field extension $K$ of $F$ of degree $n+1>2$. If $F$ is algebraically closed, then the only algebraic extension over $F$ has degree $1<n+1$ and if $F$ is real closed, then the algebraic closure of $F$ has degree $2<n+1$ over $F$ by the Artin-Schreier Theorem.
\end{proof}

\begin{corollary}\label{RCniet}
The projective geometries realised as $\mathbb{P}^n_F$, where $F$ is an archimedean local field and $n\in\N_{>1}$, do not admit incidence abelian group structures. In particular, they do not admit continuous incidence abelian group structures.    
\end{corollary}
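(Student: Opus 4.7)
The plan is to reduce this statement directly to the preceding corollary via the classification of Archimedean local fields. First, I would invoke condition (AR) from Section~\ref{Sec2}: an Archimedean local field $F$ is either $\mathbb{R}$ or $\mathbb{C}$, each with the Euclidean topology. Then I would observe that $\mathbb{C}$ is algebraically closed and $\mathbb{R}$ is real closed, the latter because its algebraic closure is $\mathbb{C}$, a degree-$2$ extension (the classical consequence of the Intermediate Value Theorem together with the Fundamental Theorem of Algebra).

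With this dichotomy in hand, the first assertion becomes an immediate instance of the previous corollary: for $n\in\N_{>1}$ we would need, by Theorem~\ref{keythm}, a field extension of $F$ of degree $n+1>2$, but no such extension exists when $F\in\{\mathbb{R},\mathbb{C}\}$. Hence $\mathbb{P}_F^n$ admits no incidence abelian group structure. The ``in particular'' clause requires no further work, since any continuous incidence abelian group structure is by definition, a fortiori, an incidence abelian group structure, so the nonexistence of the latter rules out the former.

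I do not anticipate any genuine obstacle: the structural content is already packaged in Theorem~\ref{keythm} (which in turn relies on the Fundamental Theorem of Projective Geometry, Fact~\ref{FTPr}) and in the preceding corollary, which handles algebraically and real closed base fields uniformly. The only small point to flag is the reminder that $\mathbb{R}$ is real closed in the Artin--Schreier sense, but this is entirely classical and does not deserve more than a sentence in the write-up.
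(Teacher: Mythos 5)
Your proposal is correct and follows exactly the route the paper intends: Corollary~\ref{RCniet} is an immediate consequence of the classification (AR) of Archimedean local fields as $\mathbb{R}$ or $\mathbb{C}$ together with the preceding corollary on algebraically closed and real closed base fields, with the ``in particular'' clause following a fortiori. No gaps.
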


\begin{theorem}\label{mainth}
Let $F$ be a non-Archimedean local field and $n\in\N_{>1}$. The projective geometry realised as $\mathbb{P}_F^n$ admit countably many (up to isomorphism of topological groups) continuous abelian incidence group structures. If $n+1$ is not divisible by $\Char(F)$, then $\mathbb{P}_F^n$ admits finitely many continuous abelian incidence group structures.
\end{theorem}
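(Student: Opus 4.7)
The plan is to invoke the preceding characterization corollary, which states that continuous incidence abelian group structures on $\mathbb{P}_F^n$ (for $n>1$) correspond, up to the relevant isomorphism, to topological hyperfields of the form $K_{F^\times}$, where $K|F$ is a field extension of degree $n+1$. Each such extension produces at most one topological-hyperfield, hence at most one topological-group, isomorphism class. Consequently, the number of continuous incidence abelian group structures on $\mathbb{P}_F^n$ (up to isomorphism of topological groups) is bounded above by the number of $F$-isomorphism classes of field extensions $K|F$ of degree $n+1$. The proof therefore reduces to bounding this latter count, a classical problem in the theory of local fields, and I would split according to $\Char(F)$.

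In the \emph{characteristic zero} subcase, $F$ is a finite extension of some $\mathbb{Q}_p$, and a standard consequence of Krasner's lemma is that such an $F$ admits only finitely many extensions of any prescribed degree (up to $F$-isomorphism). This yields the finite bound directly, and \emph{a fortiori} the countable one. Note also that $0\nmid n+1$, so this case is subsumed in the statement about $\Char(F)\nmid n+1$.

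In the \emph{positive characteristic} subcase, $F\simeq\mathbb{F}_q((t))$ with $p=\Char(F)$. Any finite extension factors uniquely as separable over purely inseparable, and for each $p$-power degree there is essentially a unique purely inseparable extension of $F$, so it suffices to bound separable extensions. Finite separable extensions of $F$ of degree $d$ correspond to open subgroups of index $d$ of the absolute Galois group $G_F$; although $G_F$ is not topologically finitely generated, such extensions are described by Artin--Schreier--Witt theory, which relates the degree-$p$ separable extensions to cosets in $F/\wp(F)$ (where $\wp(x)=x^p-x$) and higher $p$-power degrees to analogous Witt-vector quotients. The crucial observation is that, despite $F$ being uncountable, $F/\wp(F)$ is a \emph{countable} $\mathbb{F}_p$-vector space: every Laurent series has a finite principal part, so representatives of $\wp$-cosets form a countable direct sum of copies of $\mathbb{F}_q$, indexed by negative integers coprime to $p$. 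Iterating this observation along the ramification filtration, together with the (finite) classification of tame extensions of each degree, produces countably many separable extensions of $F$ of each finite degree. When moreover $p\nmid n+1$, every extension of degree $n+1$ is automatically separable and tamely ramified: indeed, the ramification index $e$ divides $n+1$, hence is coprime to $p$. The classification of tame extensions (unique unramified extension of each residue degree $f\mid n+1$, and tame totally ramified extensions of degree $e$ parametrized by the finite group $F^\times/F^{\times e}$) then yields the desired finite bound.

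\textbf{Main obstacle.} The primary technical difficulty is the wildly ramified positive-characteristic case, where $p\mid n+1$: one must extract the countability of wild extensions from the non-topologically-finitely-generated group $G_F$. The decisive input is the finite-principal-part structure of Laurent series, which makes quotients like $F/\wp(F)$ countable even though $F$ itself is not. A minor subtlety is that non-$F$-isomorphic extensions $K,K'$ could in principle produce isomorphic topological quotient groups $K^\times/F^\times\simeq K'^{\times}/F^\times$; however, this only tightens the bound, so the surjection from extensions to structures still transfers countability and finiteness in the desired direction.
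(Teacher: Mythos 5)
Your reduction is exactly the paper's: both pass through the characterization corollary to bound the number of structures by the number of isomorphism classes of degree-$(n+1)$ extensions of $F$, dispose of characteristic zero and of the tame case ($\Char(F)\nmid n+1$) by the classical finiteness of such extensions, and are left with the wildly ramified positive-characteristic case. Where you genuinely diverge is in how countability is obtained for $\mathbb{F}_q((t))$: you argue via Artin--Schreier--Witt theory, using that $F/\wp(F)$ is a countable $\mathbb{F}_p$-vector space because Laurent series have finite principal parts, and then assembling general extensions from cyclic layers. The paper instead proves a purely topological lemma (its Lemma \ref{claim} and Claim \ref{toppol}): using Krasner's Lemma, every separable irreducible monic polynomial of degree $d$ has an open neighbourhood in the space $S_d\subseteq F[X]_d\simeq F^{d+1}$ of monic degree-$d$ polynomials all of whose members generate the same extension; uncountably many non-isomorphic minimal separable extensions would then yield uncountably many pairwise disjoint open subsets of $F^{d+1}$, contradicting second countability (Proposition \ref{obv}). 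The paper's route is more self-contained and avoids Galois cohomology entirely; yours connects the count to standard local class field theory and makes the \emph{source} of the countability (the finite-principal-part phenomenon) more transparent, but the step ``iterating this observation along the ramification filtration'' hides real work: an arbitrary degree-$d$ extension is not itself a tower of cyclic layers, so one must pass to Galois closures (solvable, of degree at most $d!$) or, as the paper does, reduce to minimal extensions first, and then take a countable union over countably many intermediate base fields.

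Two smaller points. First, your argument only bounds the count from above; the theorem (and the abstract) also asserts that such structures \emph{exist}, and the paper's proof spends its first half verifying that multiplication in $F[X]/(p_n)$ is continuous for the product topology, so that each extension really does yield a \emph{continuous} structure. You should add this verification or at least note that $K^\times/F^\times$ is a topological group for the quotient topology. Second, your closing remark that distinct extensions might give isomorphic quotient groups, and that this only helps, is correct and matches the paper's implicit use of a surjection from extensions onto structures.
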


\begin{proof}
Fix $n\in\N$. Since the algebraic closure of $F$ has infinite degree over $F$, there exists an irreducible polynomial $p_n\in F[X]$ of degree $n+1$. By Corollary \ref{finprstr}, the canonical hypergroups associated to the projective geometries realised as $\mathbb{P}_F^n$ are isomorphic to the additive hypergroup of the factor hyperfield of $F[X]/(p_n)$ modulo $F^\times$. Let $H$ denote this hyperfield. By Corollary \ref{hyptwo}, if for all $a\in H^\times$, the map $x\mapsto ax$ from $H^\times$ to $H^\times$ is continuous, where $H^\times=\mathbb{P}_F^n$ is endowed with the standard topology of the $n$-dimensional projective space, then these give all the possibilities. Let $O$ be open in $H^\times$. This means that $\pi^{-1}(U)$ is open in $F[X]/(p_n)\simeq F^{n+1}$ and does not contain $0$. It thus suffices to show that
\begin{align*}
f_q:F[X]/(p_n)&\to F[X]/(p_n)\\
p+(p_n)&\mapsto qp+(p_n)
\end{align*}
is continuous with respect to the product topology, for all nonzero $q+(p_n)\in F[X]/(p_n)$. This happens if and only if $\pi_i\circ f_q: F[X]/(p_n)\simeq F^{n+1}\to F$ is continuous for all $1\leq i\leq n+1$, where $\pi_i(\bar v)$ gives the $i$-th component of the vector $\bar v\in F^{n+1}$. Since the $i$-th coefficient of a product of polynomials modulo $p_n$ is computed using only the additive and the multiplicative structure of $F$, which is a topological field, the desired continuity follows. 

The assertions on the cardinality of the collection of all homeo-isomorphism classes of continuous incidence abelian group structures on $\mathbb{P}_F^n$ will be proven if we are able to show that the non-Archimedean local field $F$ admits countably many finite extensions $K$ of degree $n+1$ and finitely many in case $\Char(F)$ does not divide $n+1$ (cf. Corollary \ref{finprstr}). The latter is a well-known fact (see e.g.\ \cite[page 83, Theorem 6 and subsequent remark]{Fal08})
and we note that $\Char(\mathbb{Q}_p)=0$ and that $\mathbb{F}_{p^k}((t))$ is a finite extension of $\mathbb{F}_p((t))$, for all prime numbers $p$. Therefore, our theorem follows from Lemma \ref{claim} below.
\end{proof}

For the proof of the next result we shall make use of a famous theorem known as Krasner's Lemma \cite{Kra46} (although first proved by Ostrowski \cite{Ost17}).

\begin{fact}[\cite{Fal08}, page 78]\label{KraL}
Let $F$ be a field complete with respect to a non-Archimedean norm and denote by $F^{\text{alg}}$ its algebraic closure. Let $\lvert~\rvert$ be the canonical extension of the norm of $F$ to $F^{\text{alg}}$. If $x\in F^{\text{alg}}$ is separable and $x=x_1,\ldots,x_d$ are all the roots of its minimal polynomial, then for all $y\in F^{\text{alg}}$ we have that
\[
\lvert y-x\rvert<\min\{\lvert x_i-x\rvert\mid 1\leq i\leq d\}
\]
implies $F(x)\subseteq F(y)$.
\end{fact}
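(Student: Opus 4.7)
The plan is to exhibit $x$ as fixed by every $F(y)$-algebra embedding $\sigma\colon F(x,y)\to F^{\text{alg}}$; by separability of $x$ over $F(y)$ this forces $x\in F(y)$, giving $F(x)\subseteq F(y)$. The engine of the argument is the ultrametric inequality combined with the fact that $F$-automorphisms of $F^{\text{alg}}$ are isometries for the (uniquely extended) absolute value. So the first step would be the standard observation that, because $F$ is complete with respect to its non-Archimedean norm, the norm extends uniquely to $F^{\text{alg}}$; consequently any $F$-embedding $\sigma$ of $F^{\text{alg}}$ into itself satisfies $\lvert\sigma(z)\rvert=\lvert z\rvert$ for every $z\in F^{\text{alg}}$, since $z\mapsto\lvert\sigma(z)\rvert$ would otherwise provide a second extension.

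Next I would fix any $F(y)$-embedding $\sigma\colon F(x,y)\to F^{\text{alg}}$ and extend it to an $F$-automorphism of $F^{\text{alg}}$. Since $\sigma$ fixes $F$, we have $\sigma(x)=x_j$ for some $j\in\{1,\ldots,d\}$, and since $\sigma(y)=y$ and $\sigma$ is an isometry, the ultrametric inequality yields
\[
\lvert \sigma(x)-x\rvert \le \max\bigl(\lvert\sigma(x)-y\rvert,\lvert y-x\rvert\bigr)=\max\bigl(\lvert\sigma(x-y)\rvert,\lvert y-x\rvert\bigr)=\lvert y-x\rvert.
\]
By hypothesis this is strictly smaller than $\lvert x_i-x\rvert$ for every conjugate $x_i\neq x$, so $\sigma(x)$ cannot be any such $x_i$; hence $\sigma(x)=x$. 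As every $F(y)$-embedding of $F(x,y)$ into $F^{\text{alg}}$ fixes $x$, and $x$ is separable over $F(y)$ (its minimal polynomial over $F$ being separable), standard Galois theory yields $x\in F(y)$, i.e.\ $F(x)\subseteq F(y)$.

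The main obstacle is not the short Galois-theoretic step above but the preliminary uniqueness of the norm extension to $F^{\text{alg}}$, on which the isometry property of $F$-automorphisms rests. This fact relies crucially on the completeness of $F$ and is a classical but non-trivial result of non-Archimedean analysis; without it the ultrametric calculation has no traction, since the conjugates $x_1,\ldots,x_d$ could a priori be moved around by $F$-automorphisms in ways that violate the metric constraint.
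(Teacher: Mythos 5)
Your argument is the classical proof of Krasner's Lemma and is correct in every step: completeness of $F$ gives the unique extension of the norm to $F^{\text{alg}}$, hence every $F$-automorphism is an isometry; the ultrametric estimate $\lvert\sigma(x)-x\rvert\le\lvert y-x\rvert$ then forces $\sigma(x)=x$ for each $F(y)$-embedding $\sigma$, and separability of $x$ over $F(y)$ yields $x\in F(y)$. The paper states this result as a Fact cited from the literature and gives no proof of its own, so there is nothing to compare against; I only note that the minimum in the displayed hypothesis must be taken over the conjugates $x_i\neq x$ (as you implicitly do), since including $x_1=x$ would make the bound $0$ and the hypothesis vacuous.
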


\begin{lemma}\label{claim}
The field $\mathbb{F}_p((t))$ admits (at most) countably many field extensions of any fixed finite degree, for all prime numbers $p$.
\end{lemma}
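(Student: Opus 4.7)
The plan is to reduce the problem to counting separable extensions of a fixed degree, then apply Krasner's Lemma (Fact \ref{KraL}) together with the second countability of $F=\mathbb{F}_p((t))$, which holds since $F$ is a complete metric space containing the countable dense subset $\mathbb{F}_p(t)$. Consequently $F^m$ with the product topology is also second countable for every $m\in\N$.

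The first step reduces to the separable case. For a finite extension $K/F$ of degree $n$, let $L$ denote the separable closure of $F$ inside $K$, so that $K/L$ is purely inseparable of degree $p^k$ with $[L:F]\cdot p^k=n$. Being a finite separable extension of the characteristic-$p$ local field $F$, the field $L$ is itself isomorphic to a Laurent series field $\mathbb{F}_{p^f}((\pi))$ over a perfect residue field, hence $[L:L^p]=p$, and inductively $[L^{1/p^e}:L]=p^e$ for every $e\geq 0$. A purely inseparable extension $M/L$ of degree $p^k$ has exponent $e\leq k$, whence $M\subseteq L^{1/p^e}$, and a dimension count forces $e=k$ and $M=L^{1/p^k}$. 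Thus the $F$-isomorphism class of $K$ is determined by the pair $(L,k)$, and it suffices to show that for each $m$ there are only countably many separable extensions of $F$ of degree $m$ up to $F$-isomorphism.

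For the main step, the primitive element theorem says every separable extension of $F$ of degree $m$ has the form $F(\alpha)$ for some $\alpha$ with minimal polynomial $f_\alpha\in F[X]$ of degree $m$. Krasner's Lemma provides an $\epsilon>0$ depending on $f_\alpha$ such that any monic $g\in F[X]$ of degree $m$ whose coefficients all lie within $\epsilon$ of those of $f_\alpha$ (in the $t$-adic norm) admits a root $\beta$ with $F(\alpha)\subseteq F(\beta)$; a degree comparison then yields $F(\alpha)=F(\beta)$. Hence the map sending an irreducible monic separable polynomial of degree $m$ to the $F$-isomorphism class of the extension it generates is locally constant, viewed as a partial function on $F^m$. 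Second countability of $F^m$ then forces its image to be at most countable.

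The main obstacle is the purely inseparable reduction. The non-obvious input is that over a Laurent series field $L=\mathbb{F}_{p^f}((\pi))$ with perfect residue field, the only purely inseparable extension of degree $p^k$ inside a fixed algebraic closure is $L^{1/p^k}$. This requires carefully separating the notions of degree and exponent of a purely inseparable extension and relying on the perfectness of the residue field to secure $[L:L^p]=p$.
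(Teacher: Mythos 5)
Your proof is correct, and its core step coincides with the paper's: Krasner's Lemma (Fact \ref{KraL}) shows that the assignment sending a monic separable irreducible polynomial to the isomorphism class of the extension it generates is locally constant on a subset of an affine space over $F$, and separability/second countability of $F^m$ (this is exactly Proposition \ref{obv}, which the paper proves via the countable dense subset $\mathbb{F}_p(t)$) caps the number of classes at countably many; your phrase \lq\lq Krasner's Lemma provides an $\epsilon$\rq\rq\ compresses the continuity-of-roots estimate $\lvert g(\alpha)\rvert=\prod_j\lvert\alpha-\beta_j\rvert$, which is spelled out in the paper's Claim \ref{toppol} and would need to be written out, but this is routine. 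Where you genuinely diverge is the reduction to the separable case. The paper reduces to \emph{minimal} extensions by an induction on the degree and observes that a minimal inseparable extension must be $F(a^{1/p})=F^{1/p}\simeq\mathbb{F}_p((t^{1/p}))$; this induction implicitly requires the countability statement for all finite extensions of $F$, not just for $F$ itself. You instead use the separable closure tower $F\subseteq L\subseteq K$ and the fact that a field $L$ with $[L:L^p]=p$ has a \emph{unique} purely inseparable extension of each degree $p^k$, namely $L^{1/p^k}$, so that the isomorphism class of $K$ is determined by that of $L$ together with $k$. Your exponent-versus-degree argument for this uniqueness is correct (exponent $e\le k$ gives $M\subseteq L^{1/p^e}$ with $[L^{1/p^e}:L]=p^e$, forcing $e=k$ and $M=L^{1/p^k}$), and it buys a cleaner, non-inductive reduction that makes the inseparable contribution completely explicit; the price is the extra input that finite separable extensions of $\mathbb{F}_p((t))$ are again Laurent series fields over finite (hence perfect) residue fields, or at least that the imperfection degree $[L:L^p]=p$ is preserved, which is standard but should be cited.
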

\begin{proof}
Fix a prime number $p$. Let us denote $\mathbb{F}_p((t))$ by $F$ and take $d\in\N$. Clearly, if $d=1$, then we have only one finite extension of degree $d$. Let us then assume that $d>1$. We first reduce to the case of minimal finite extensions $K$ of $F$, i.e., with no intermediate fields. Indeed, if $F$ admits countably many non-isomorphic minimal extensions of any fixed degree, then suppose that we have (up to isomorphism) uncountably many extensions $K_i$ of $F$ of some fixed degree $n$. We can assume that $n$ is minimal, i.e., $F$ admits countably many non-isomorphic extensions of any fixed degree $<n$. For each $i$, we can find a minimal extension $F\subseteq L_i\subseteq K_i$ of degree, say, $1<n_i<n$ over $F$. By assumption, the collection (up to isomorphism) of the $L_i$ is countable, but then the collection of the $K_i$ would also be countable by induction on $n$.

Let us then take a minimal extension $K$ of $F$ of degree $d$. By minimality we must have $K=F(x)$ for some $x$ with minimal polynomial $p(X)$ of degree $d$. Assume first that there is $q\in F[X]$ such that $p(X)=q(X^p)$. Then $F\subseteq F[x^p]\subseteq K$ implies by minimality that $x^p\in F$ and thus $K\simeq\mathbb{F}_p((t^\frac{1}{p}))$. By \cite[Proposition 4.6 (1)]{Mor96}, we can now assume without loss of generality that $K|F$ is separable. We make the following:
\begin{claim}\label{toppol}
Let $F[X]_d$ be the $d+1$-dimensional $F$-vector space of polynomials of degree at most $d$, endowed with the topology induced from $F$ and let $S_d\subseteq F[X]_d$ be the subset of monic polynomials of degree precisely $d$. Endow $S_d$ with the subspace topology. For all separable and irreducible $p\in S_d$ there exists an open set $O_p$ in $S_d$ such that $p\in O_p$ and for all $q\in O_p$ we have that $F[X]/(q)\simeq F[X]/(p)$ as rings. In particular, every element of $O_p$ is irreducible.
\end{claim}
\begin{proof}[Proof of Claim \ref{toppol}]
Let $p\in S_d$ be separable and irreducible, $K:=F[X]/(p)$ and $x_1,\ldots,x_d$ the roots of $p$ is the algebraic closure of $F$. Set
\[
r:=\frac{1}{2}\inf\left\{\lvert x_i-x_j\rvert\mid 1\leq i<j\leq d\right\}
\]
and 
\[
O_p:=\{q\in S_d\mid q(x_i)<r^d\text{ for all }i=1,\ldots,d\}.
\]
Take $q\in O_p$ and let $y_1,\ldots,y_k$ be its roots, for some $k\leq d$. Fix $1\leq i\leq d$ and suppose that $\lvert x_i-y_j\rvert\geq r$ for all $1\leq j\leq k$. Since $q$ is monic we obtain that
\[
\lvert q(x_i)\rvert=\prod_{j=1}^k\lvert x_i-y_j\rvert\geq r^d,
\]
a contradiction. We conclude that, for every $1\leq i\leq d$ and every $q\in O_p$ there is a root $y_j$ of $q$ such that $\lvert x_i-y_j\rvert<r$. If $y$ would be a root such that $\lvert x_i-y\rvert<r$ and $\lvert x_j-y\rvert<r$ for some $1\leq i<j\leq d$, then we would obtain a contradiction from the definition of $r$ as
\[
\lvert x_i-x_j\rvert=\lvert x_i-y+y-x_j\rvert\leq \lvert x_i-y\rvert+\lvert y-x_j\rvert<2r.
\]
Since $O_p\subseteq S_d$, this shows that all $q\in O_p$ are separable. We now enumerate the roots $y_1,\ldots,y_d$ of $q\in O_p$ so that $\lvert x_i-y_i\rvert<r$ for all $1\leq i\leq d$. In particular,
\[
\lvert x_1-y_1\rvert<r<\lvert x_1-x_i\rvert,
\]
for all $1< i\leq d$. Hence, Krasner's Lemma \ref{KraL} yields $K=F(x_1)\subseteq F(y_1)$ and $O_p\subseteq S_d$ now implies that all $q\in O_p$ are irreducible with $K=F[y_1]\simeq F[X]/(q)$. This completes the proof of the claim.
\renewcommand{\qedsymbol}{\#}
\end{proof}
Let $S_d^s$ be the subset of $S_d$ containing all monic separable polynomials of degree $d$. By Claim \ref{toppol}, $S_d^s$ is open in $S_d$. Moreover, if $F$ would have uncountably many minimal separable non-isomorphic extensions, then we would obtain uncountably many disjoint open subsets of $S_d^s\subseteq F[X]_d\simeq F^{d+1}$. However, the latter is excluded by Proposition \ref{obv}. We conclude that $F$ has at most countably many extensions of degree $d$.
\end{proof}

\appendix

\section{More on local fields}
\label{app1}
The facts presented in this appendix are well-known and can easily be found in the literature (e.g.\ in \cite{PE05}). We write them here for the convenience of the reader and to explain notation and terminology.\par
An \emph{Archimedean norm (or absolute value)} on a field $K$ is axiomatised a real-valued function $\lvert~\rvert:K\to[0,+\infty)$ satisfying $\lvert x\rvert=0$ if and only if $x=0$, $\lvert x\cdot y\rvert=\lvert x\rvert\cdot\lvert y\rvert$ and the triangular inequality $\lvert x+y\rvert\leq\lvert x\rvert+\lvert y\rvert$. A \emph{non-Archimedean norm} is an Archimedean norm satisfying the (stronger) \emph{ultrametric triangular inequality} $\lvert x+y\rvert\leq\max(\lvert x\rvert,\lvert y\rvert)$.

Any norm on a field $K$ induces a metric on $K$ by setting the distance between two elements $x$ and $y$ of $K$ to be $\lvert x-y\rvert$. Two norms on a field are \emph{equivalent} if the corresponding metrics induce the same topology on $K$.

All archimedean norms on $\mathbb{Q}$ are equivalent to the standard one $|x|:=\sqrt{x^2}$. The completion of $\mathbb{Q}$ with respect to the metric induced by this norm is $\mathbb{R}$. On the other hand, the set of all non-Archimedean norms of $\mathbb{Q}$ is (up to equivalence) in bijection with the set of prime numbers. This is a theorem of Ostrowski \cite[Theorem 2.1.4(a)]{PE05}. For a prime number $p$, the \emph{$p$-adic norm on }$\mathbb{Q}$ is defined on an integer $a$ as $\lvert a\rvert_p:=p^{-\nu}$, where $\nu$ is the biggest natural number such that $p^\nu$ divides $a$. Then for $a,b\in\mathbb{Z}$, with $b\neq 0$ one naturally sets
\[
\left\lvert\frac{a}{b}\right\rvert_p:=\frac{\lvert a\rvert_p}{\lvert b\rvert_p}.
\]
The field of \emph{$p$-adic numbers} $\mathbb{Q}_p$ can be defined as the completion of $\mathbb{Q}$ with respect to the metric induced by the $p$-adic norm and the $p$-adic norm extends canonically from $\mathbb{Q}$ to $\mathbb{Q}_p$ \cite[Theorem 1.1.4]{PE05}. The finite extensions of $\mathbb{Q}_p$ are the non-Archimedean local fields of characteristic $0$.

Let now $k(t)$ denote the field of rational functions over a field $k$ and $h\in k[t]$ be an irreducible polynomial with coefficients in $k$. The \emph{$h$-adic norm on }$k(t)$ is defined analogously: for a polynomial $f\in k[t]$ set $\lvert f\rvert_h:=h^{-\nu}$, where where $\nu$ is the biggest natural number such that $h^\nu$ divides $f$. Then for $f,g\in k[X]$, with $g\neq 0$ one naturally sets
\[
\left\lvert\frac{f}{g}\right\rvert_h:=\frac{\lvert f\rvert_p}{\lvert g\rvert_p}.
\]
Clearly, $\lvert a\rvert_h=1$ for all $a\in k^\times$. By \cite[Theorem 2.1.4(b)]{PE05}, any norm on $k(t)$ with the latter property is (up to equivalence) $h$-adic for some irreducible $h\in k[t]$, unless it is the \emph{degree norm}:
\[
\left\lvert\frac{f}{g}\right\rvert_h:=\exp(\deg(f)-\deg(g)),
\]
where there is no preferred choice for the base of the exponential function $\exp$ (but all bases give equivalent norms). The field $k((t))$ of \emph{formal Laurent series} over $k$ can be defined as the completion of $k(t)$ with respect to the metric induced by the $t$-adic norm, which again extends canonically from $k(t)$ to $k((t))$. For $k=\mathbb{F}_{p^n}$, where $p$ is prime and $n\in\N$, we obtain the non-Archimedean local fields of characteristic $p$.\par

\section{The standard topology on the finite-dimensional projective spaces over a local field} \label{appB}

As a general reference for topology let us mention \cite{Mun00}. As the previous appendix, also here all the presented results are (especially in the Archimedean case) well-known and the reasons to write this appendix are essentially the same as those motivating the previous one.\par 
Here, $(F,\tau)$ will denote a local field endowed with the topology defined by its canonical norm. By definition $(F,\tau)$ is a Hausdorff and locally compact topological space. 
Take $n\in\N$. Since any product of Hausdorff spaces endowed with the product topology is Hausdorff, we have that $F^{n+1}$ is a Hausdorff space, whence so is $F^{n+1}\setminus\{\bar 0\}$.

To prove that $\mathbb{P}^n_F$ is also a Hausdorff space we can make use of the following general result.

\begin{lemma}
\label{haus}
Let $(X,\tau)$ be a topological space and $\sim$ an equivalence relation on $X$. Assume that the canonical map $\pi:X\to X/{\sim}$ is open, where $X/{\sim}$ is endowed with the quotient topology. Then $X/{\sim}$ is Hausdorff if and only if $\{(x,y)\in X\times X\mid x\sim y\}$ is closed in $X\times X$ with respect to the product topology.
\end{lemma}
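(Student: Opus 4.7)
The plan is to handle the two implications separately, using the product map $\pi\times\pi:X\times X\to X/{\sim}\times X/{\sim}$ and its relationship with the diagonal of $X/{\sim}$. Note that $R:=\{(x,y)\in X\times X\mid x\sim y\}$ is precisely $(\pi\times\pi)^{-1}(\Delta)$, where $\Delta:=\{([x],[x])\mid x\in X\}$ is the diagonal of $X/{\sim}\times X/{\sim}$.

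For the forward direction, I would assume $X/{\sim}$ is Hausdorff. A standard characterization of Hausdorff spaces is that the diagonal is closed in the square (with the product topology). Hence $\Delta$ is closed in $X/{\sim}\times X/{\sim}$. Since $\pi$ is continuous by the very definition of the quotient topology, $\pi\times\pi$ is continuous as a map between products endowed with the product topology, so its preimage of $\Delta$, namely $R$, is closed in $X\times X$. Notice that this direction does not use openness of $\pi$.

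For the converse, I would assume $R$ is closed and show that disjoint open neighborhoods of distinct points $[x]\neq[y]$ in $X/{\sim}$ can be produced. The point $(x,y)$ does not belong to $R$, so by closedness of $R$ and the definition of the product topology there exist $\tau$-open sets $U,V\subseteq X$ with $x\in U$, $y\in V$ and $(U\times V)\cap R=\emptyset$; that is, no element of $U$ is $\sim$-equivalent to any element of $V$. Now I invoke the openness of $\pi$: the sets $\pi(U)$ and $\pi(V)$ are open in $X/{\sim}$ and contain $[x]$ and $[y]$ respectively. If some class $[z]$ were in $\pi(U)\cap\pi(V)$, it would have representatives $u\in U$ and $v\in V$ with $u\sim v$, contradicting $(U\times V)\cap R=\emptyset$. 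Hence $\pi(U)\cap\pi(V)=\emptyset$, which gives the Hausdorff separation of $[x]$ and $[y]$.

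There is no real obstacle; the only thing to be careful about is to pinpoint where each hypothesis enters the argument. Continuity of $\pi$ (which is automatic for the quotient topology) suffices for the \emph{only if} direction, whereas openness of $\pi$ is the essential ingredient making the \emph{if} direction work: without it, one could not promote the separating open rectangle $U\times V$ in $X\times X$ to a pair of disjoint open neighborhoods in the quotient. This matches how the lemma will be applied in the projective setting, where the quotient map $F^{n+1}\setminus\{\bar 0\}\to\mathbb{P}_F^n$ is open because scalar multiplication by each $a\in F^\times$ is a homeomorphism.
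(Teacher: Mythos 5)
Your proof is correct and follows essentially the same route as the paper's: the forward direction pulls back a separating pair of open sets (equivalently, the closed diagonal $\Delta$) through the continuous map $\pi\times\pi$, and the converse pushes open sets forward through the open map $\pi$. One small but worthwhile difference: in the converse you work with a basic open rectangle $U\times V$ disjoint from $R$, which makes the step ``a common class in $\pi(U)\cap\pi(V)$ yields a point of $(U\times V)\cap R$'' airtight, whereas the paper takes an arbitrary open $O\subseteq A$ containing $(x,y)$, projects it to the two factors, and asserts that a common class would give $(z,z)\in O$ --- a claim that only holds verbatim when $O$ is such a rectangle, so your version is actually the cleaner one.
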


\begin{proof}
Assume that $X/{\sim}$ is Hausdorff. Let $A$ be the complement of $\{(x,y)\in X\times X\mid x\sim y\}$ in $X\times X$. If $(x,y)\in A$, then $\pi(x)$ and $\pi(y)$ are distinct in $X/{\sim}$. By assumption there are disjoint open sets $O_x,O_y$ in $X/{\sim}$ with $\pi(x)\in O_x$ and $\pi(y)\in O_y$. It follows that $\pi^{-1}(O_x)\times\pi^{-1}(O_y)$ is open in $X\times X$ containing $(x,y)$. Moreover, since $O_x\cap O_y=\emptyset$, we obtain that $\pi^{-1}(O_x)\times\pi^{-1}(O_y)\subseteq A$. This shows that $A$ is open and proves one implication.\par
Conversely, if $x,y\in X$ are such that $\pi(x)\neq\pi(y)$, then $(x,y)\in A$. Thus, if $A$ is open in $X\times X$, there is a subset $O$ of $A$ that is open in $X\times X$ and contains $(x,y)$. Let $p_i:X\times X\to X$ be the canonical projection on the $i$-th coordinate ($i=1,2$). Since $p_1$ and $p_2$ are open maps, we obtain that $p_i(O)$ is open in $X$ for each $i=1,2$. Now, since $\pi$ is an open map by assumption, $\pi(p_i(O))$ is open in $X/{\sim}$ and $\pi(x)\in \pi(p_1(O))$, while $\pi(y)\in \pi(p_2(O))$. If $\pi(z)\in \pi(p_1(O))\cap \pi(p_2(O))$ for some $z\in X$, then $(z,z)\in O\setminus A$, while $O\subseteq A$. This contradiction shows that $\pi(x)$ and $\pi(y)$ can be separated by disjoint open sets in $X/{\sim}$, which is then a Hausdorff space.
\end{proof}

If we now consider the continuous map $f:F^{n+1}\setminus\{\bar 0\}\times F^{n+1}\setminus\{0\}\to F$ defined as
\[
f(x_1,\ldots,x_{n+1};y_1,\ldots,y_{n+1})=\sum_{i\neq j}(x_iy_j-x_jy_i),
\]
then we obtain that $f(\bar x;\bar y)=0$ if and only if $\bar y=a\bar x$ for some $a\in F^\times$. It follows that
\[
f^{-1}(\{0\})=\{(\bar x;\bar y)\mid \bar x\sim \bar y\}
\]
is closed in $F^{n+1}\setminus\{\bar 0\}\times F^{n+1}\setminus\{0\}$. On the other hand, since scalar multiplication by any $a\in F^\times$ is a homeomorphism on $F^{n+1}\setminus\{\bar 0\}$, we obtain that the canonical epimorphism $F^{n+1}\setminus\{\bar 0\}\to\mathbb{P}_F^n$ is open. That is how Lemma \ref{haus} above can be applied to see that $\mathbb{P}_F^n$ is a Hausdorff space with respect to its standard topology.

The reader may have noticed that the assumption that $F$, and so $F^{n+1}$ and $F^{n+1}\setminus\{\bar 0\}$, are Hausdorff has not been used to prove that $\mathbb{P}_F^n$ is a Hausdorff space. However, it is not difficult to see that the only non-Hausdorff toplogical fields have the trivial topology, in which case Lemma \ref{haus} shows that the finite-dimensional projective spaces are not Hausdorff as well. We have proved:

\begin{proposition}
Finite-dimensional projective spaces over topological fields with a non-trivial topology are Hausdorff spaces with respect to the standard topology.    
\end{proposition}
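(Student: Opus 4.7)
The plan is to reduce the proposition to a claim about the base field and then invoke the argument set up just above the statement. Specifically, the calculation preceding the proposition already establishes that $\mathbb{P}_F^n$ is Hausdorff whenever $\{0\}$ is closed in $F$: Lemma \ref{haus} is applied to the open quotient $\pi\colon F^{n+1}\setminus\{\bar 0\}\to\mathbb{P}_F^n$ (openness coming from each map $\bar x\mapsto a\bar x$, $a\in F^\times$, being a homeomorphism of $F^{n+1}\setminus\{\bar 0\}$) together with the continuous map $f$ whose zero set is the orbit relation, and the closedness of this relation uses only that $\{0\}$ be closed in $F$. So the real work is to prove that any topological field $(F,\tau)$ in the sense of (TF) with non-trivial $\tau$ satisfies $\{0\}$ closed, equivalently $(F,+)$ Hausdorff.

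I would set $N:=\overline{\{0\}}$, a closed additive subgroup of $(F,+)$ by standard topological-group theory, and aim to show that $N$ is in fact an ideal of $F$. Since $F$ is a field, this forces $N=\{0\}$ or $N=F$; the case $N=F$ is excluded by non-triviality of $\tau$, because then every open neighborhood of $0$ would be all of $F$, and by translation in the topological group $(F,+)$ every nonempty open set would also be $F$.

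The core step is to verify $aN\subseteq N$ for each $a\in F^\times$. Given $m\in N\setminus\{0\}$, I would write $m=z_1-z_2$ with $z_1,z_2\in F^\times$, which is possible whenever $|F|\geq 3$ by choosing any $z_2\in F^\times\setminus\{-m\}$ and setting $z_1:=z_2+m$; the case $F=\mathbb{F}_2$ admits only the trivial and discrete topologies as topological-field structures, and the discrete one is trivially Hausdorff. The condition $z_1-z_2=m\in N$ is equivalent to $z_1+N=z_2+N$, so $z_1$ and $z_2$ are topologically indistinguishable in $(F,+)$ and therefore also in the subspace $F^\times$. Multiplication by $a$ is a homeomorphism of the topological group $(F^\times,\cdot)$, so $az_1$ and $az_2$ remain indistinguishable in $F^\times$; and since both lie in $F^\times$, any $F$-open neighborhood $U$ of $az_1$ has $U\cap F^\times$ containing $az_2$, giving $az_2\in U$. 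Thus $az_1$ and $az_2$ are indistinguishable in $F$ as well, so $am=az_1-az_2\in N$.

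The main obstacle is precisely this transfer between the additive and the multiplicative topologies: under the paper's definition (TF), multiplication by $a\in F^\times$ is only guaranteed continuous on $F^\times$, not on all of $F$, so one cannot simply push the closed set $N$ (which contains $0$) through the map $x\mapsto ax$. The trick is to keep every relevant point away from $0$ — both $az_1$ and $az_2$ are nonzero — so the subspace topology on $F^\times$ carries the indistinguishability across without ever needing continuity of multiplication at $0$.
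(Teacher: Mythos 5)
Your proposal is correct, and its reduction is exactly the paper's: Lemma \ref{haus}, the openness of the quotient map $\pi$, and the continuous map whose zero set is the orbit relation together reduce everything to the closedness of $\{0\}$ in $F$. Where you genuinely diverge is that the paper never proves the key claim --- it only asserts that ``it is not difficult to see that the only non-Hausdorff topological fields have the trivial topology'' --- whereas you supply an actual argument: $N:=\overline{\{0\}}$ is a closed additive subgroup, you show $aN\subseteq N$ for $a\in F^\times$, so $N$ is an ideal and hence $\{0\}$ or $F$, and $N=F$ forces the trivial topology. Your workaround for the weak axiom (TF) is the right one: since multiplication by $a$ is only a homeomorphism of the subspace $F^\times$, you write $m=z_1-z_2$ with $z_1,z_2\in F^\times$ and transport topological indistinguishability (equivalently, equality of cosets of $N$) through $\mu_a$ on $F^\times$ and back into $F$; the $\mathbb{F}_2$ and $a=0$ degenerate cases are handled correctly. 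Two minor remarks. In the step ``$N=F$ implies $\tau$ trivial'' the clean order is: every nonempty open set contains $0$ (as every point lies in $\overline{\{0\}}$), and then translating a neighborhood $U$ of $0$ by $x$ shows $-x\in U$, so $U=F$; your phrasing states the conclusion before the translation argument that yields it. Also, the openness of $\pi$, which you inherit from the appendix exactly as the paper does, uses that $\bar x\mapsto a\bar x$ is a homeomorphism of $F^{n+1}\setminus\{\bar 0\}$, and points of that set may have zero coordinates, so this tacitly needs continuity of $x\mapsto ax$ at $0$, which (TF) does not literally guarantee --- but that is a feature of the paper's own argument, not a gap you introduced.
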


Now, by observing that $F$ is a locally compact complete valued field, we obtain that $F^{n+1}$ is locally compact by e.g.\ \cite[Proposition 6.2.4]{Gou20} and it follows from the fact that $F^{n+1}$ and $\{\bar 0\}$ are closed in $F^{n+1}$ (i.e., $F^{n+1}\setminus\{\bar 0\}$ is locally closed) that $F^{n+1}\setminus\{\bar 0\}$ is locally compact with respect to the subspace topology. 

Let now $\pi:F^{n+1}\setminus\{\bar 0\}\to\mathbb{P}_F^n$ denote the canonical epimorphism and consider the $n$-dimensional sphere in $F^{n+1}$:
\[
\mathbb{S}_F^n:=\left\{\bar x\in F^{n+1} \bigg\vert \sum_{i=1}^{n+1} |x_i|=1\right\},
\]
endowed with the subspace topology of the product topology on $F^{n+1}$.  Since $\pi{\restriction}\mathbb{S}_F^n$ is still a surjective and continuous map, if we show that $\mathbb{S}_F^n$ is compact in $F^{n+1}$, then we will obtain that $\mathbb{P}_F^n$ is a compact space with respect to the standard topology. We have that $F^{n+1}$ is a (complete) metric space with respect to the distance induced by the canonical norm $\lvert~\rvert$ of $F$:
\[
\lVert x-y\rVert:=\sum_{i=1}^{n+1}\lvert x_i-y_i\rvert.
\]
By a \emph{closed ball} in $F^{n+1}$ we mean a set of the form:
\[
B[\bar c,r]:=\{\bar x\in F^{n+1}\mid \lVert x-c\rVert\leq r\}\quad\quad(\bar c\in F^{n+1}, r\in\mathbb{R}).
\]
Since for all $a\in F^\times$ the map
\begin{align*}
m_a:F^{n+1}&\to F^{n+1}\\
\bar x&\mapsto a\bar x
\end{align*}
is continuous and $a\bar 0=\bar 0$, if $B$ is a closed ball with radius $r$ containing $0$, then $m_a(B)$ is a closed ball of radius $\lvert a\rvert r$ containing $0$. Since $F^{n+1}$ is locally compact, there exists a compact set $C_0\subseteq F^{n+1}$ which contains an open set $O_0$ where $\bar 0$ lies. If
\[
d:=\inf\{d(\bar x,\bar 0)\mid \bar x\in C_0\},
\]
then we obtain that $d>0$ because $C_0$ contains the open set $O_0$ and $B:=B[\bar 0,d]$ is a closed ball contained in the compact set $C_0$, thus itself compact. Even if $d<1$, then by the Archimedean property of $\mathbb{R}$ we may find $a\in F$ such that $\lvert a\rvert d\geq 1$. Hence, we obtain that $\mathbb{S}^n\subseteq m_{a}(B)$ and now the compactness of $\mathbb{S}^n$ follows from its closedness. We have proved Fact \ref{Pcomp}.

Before closing this appendix let us observe that by their very definition, $\mathbb{Q}_p$ and $\mathbb{F}_p((t))$ are \emph{separable} spaces with respect to the topology induced by their canonical norms. Indeed, $\mathbb{Q}$ and $\mathbb{F}_p(t)$ are countable dense subsets of $\mathbb{Q}_p$ and $\mathbb{F}_p((t))$, respectively. Hence (as in the case of $\mathbb{R}$ and $\mathbb{C}$, or any other separable metric space), if $F$ is a local field and $D\subseteq F$ is a countable dense subset, then one obtains that:
\[
\{B[\bar c,r]\mid \bar c\in D^n,r\in\mathbb{Q}\}
\]
is a countable base for the topology of $F^n$, for all $n\in\N$. From this we easily deduce:
\begin{proposition}\label{obv}
Let $F$ be a local field and $n\in\N$. Then any family of pairwise disjoint open sets in $F^n$, with respect to the product topology, is finite or countable.
\end{proposition}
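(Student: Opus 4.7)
The plan is to exploit second-countability of $F^{n}$, which has essentially been established in the text just before the proposition. For each of the possible local fields $F$, a countable dense subset $D \subseteq F$ is available: $\mathbb{Q}$ works for $\mathbb{R}$, $\mathbb{Q}[i]$ for $\mathbb{C}$, $\mathbb{Q}$ for $\mathbb{Q}_p$ and its finite extensions (one may pick a countable dense subset of any finite extension since it is homeomorphic to a finite power of $\mathbb{Q}_p$), and $\mathbb{F}_p(t)$ for $\mathbb{F}_p((t))$ (and more generally $\mathbb{F}_{p^k}(t)$ for $\mathbb{F}_{p^k}((t))$). Then $D^n$ is a countable dense subset of $F^n$, and the collection
\[
\mathcal{B} := \{B[\bar c, r] \mid \bar c \in D^n,\ r \in \mathbb{Q}_{>0}\}
\]
is a countable family of subsets of $F^n$ which, as observed in the paragraph preceding the statement, forms a base for the product topology on $F^n$.

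Once second-countability is in hand, the countable chain condition is a standard consequence. Given a family $\mathcal{F}$ of pairwise disjoint \emph{nonempty} open sets in $F^n$, for each $U \in \mathcal{F}$ select, using the axiom of choice if one wishes to be formal, a basic open set $B_U \in \mathcal{B}$ with $B_U \subseteq U$; this is possible because $\mathcal{B}$ is a base. Since the sets in $\mathcal{F}$ are pairwise disjoint, the assignment $U \mapsto B_U$ is injective: if $B_U = B_{U'}$ were nonempty and contained in both $U$ and $U'$, then $U \cap U' \neq \emptyset$, forcing $U = U'$. Hence $\mathcal{F} \setminus \{\emptyset\}$ injects into the countable set $\mathcal{B}$, and so $\mathcal{F}$ is at most countable (the empty set may appear at most once in a family of pairwise disjoint sets, and does not affect cardinality).

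There is no serious obstacle here: the only subtlety is verifying the countable dense subset in each of the three cases for local fields, but this is transparent once one recalls the explicit descriptions of (AR), (NAR$_0$), and (NAR$_p$) given in Section~\ref{Sec2} and the discussion at the end of Appendix~\ref{appB}.
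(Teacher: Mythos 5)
Your proof is correct and takes essentially the same approach as the paper's: both derive the statement from second-countability of $F^n$ (the countable base coming from a countable dense subset, as set up at the end of Appendix~\ref{appB}) by injecting the family of pairwise disjoint nonempty open sets into that countable base. The only cosmetic differences are that the paper selects the basic set via a minimal index rather than invoking choice, and that you are slightly more explicit about excluding the empty set from the injectivity argument.
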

\begin{proof}
Let $\mathcal{O}$ be a family of pairwise disjoint open sets in $F^n$ and $\{B_i\mid i\in\N\}$ a countable base for the topology. By definition of base, for any $O\in\mathcal{O}$ there exists some $i\in\N$ such that $B_i\subseteq O$. Consider the function $f:\mathcal{O}\to\mathbb{N}$ mapping any $O\in\mathcal{O}$ to $\min\{i\in\N\mid B_i\subseteq O\}$. Since the elements of $\mathcal{O}$ are pairwise disjoint, $f$ is injective proving that $\mathcal{O}$ must be finite or countable.
\end{proof}
\bibliography{biblio}
\bibliographystyle{abbrv}
\end{document}